\documentclass[reqno,openright,11pt,twoside]{amsart}

\usepackage{ifthen}
\usepackage{amssymb, bm, xspace}

\usepackage{enumerate}

% \usepackage{graphicx}

% \graphicspath{{Figures/}}

%--- From Gautam
\iffalse
    % 2 Column mode to save trees. Margins chosen so that linewidth here is the
    % same as amsart, one column with default margins on US letter paper, so
    % that no overfull hboxes are created / destroyed.
    \setlength{\columnsep}{24.98999pt}
    \usepackage[landscape,includehead,headheight=12pt,headsep=10pt,left=\columnsep,right=\columnsep,top=12.98999pt,bottom=\columnsep,]{geometry}
    \twocolumn
\fi

% So one can click on equation reference, theorem references, etc., and go to them. Also works for
% table of contents entries.
\usepackage[pagebackref=true, colorlinks=true, citecolor=blue]{hyperref}

% \usepackage{empheq}    % should come somewhere after fancybox if that is used

%--- Package for synchronizing between \LaTeX\ source and \textsc{pdf} output; might be
%--- better off not including this package. It is referenced only in the SideNote command,
%--- which I got from Gautam Iyer.
\usepackage[off]{pdfsync}

%--- AMS extensions primarily for DeclarePairedDelimiter
\usepackage{mathtools}

%--- To force the use of \bm rather than \mathbf
\renewcommand{\mathbf}[1]{\bm{#1} \textbf{ *** Use bm instead of mathbf ***}}

%
% Use the second line if the version of LATEX you are using
% won't update to include the bbm package.
%
% \newcounter{HaveBBM} \setcounter{HaveBBM}{1}
\newcounter{HaveBBM} \setcounter{HaveBBM}{0}

\ifthenelse{\value{HaveBBM}=1}{
    \usepackage{bbm}
    }{}

% \usepackage{stmaryrd}  % used to define \varoast

%============================================================
%
% Conditional compilation macros for including various levels of detail
%

%
% Detail levels are:
%    0 - for submitted version of paper
%    1 - for submitted version with MarginNotes
%    2 - for detailed version of some proofs
%    3 - full details
\newcounter{dtlForSubmission} \setcounter{dtlForSubmission}{0}
\newcounter{dtlMarginComments} \setcounter{dtlMarginComments}{1}
\newcounter{dtlSomeDetail} \setcounter{dtlSomeDetail}{2}
\newcounter{dtlFullDetails} \setcounter{dtlFullDetails}{3}

%
% The current detail level
%

%--- Uncomment next line for submission format
\newcounter{DetailLevel} \setcounter{DetailLevel}{\value{dtlForSubmission}}

%--- Uncomment next line (and comment previous command) for submission format with margin comments
% \newcounter{DetailLevel} \setcounter{DetailLevel}{\value{dtlMarginComments}}

%--- Uncomment next line (and comment previous two commands) for some details
% \newcounter{DetailLevel} \setcounter{DetailLevel}{\value{dtlSomeDetail}}

%--- Uncomment next line (and comment previous three command) for full details
% \newcounter{DetailLevel} \setcounter{DetailLevel}{\value{dtlFullDetails}}

%\newcommand{\Detail}[2]{
%    \ifthenelse{\value{DetailLevel}=#1 \or \value{DetailLevel}>#1}{#2}
%    {}
%    }

%--- Not really needed, but useful for provisionally including text all the time.

\newcommand{\DetailMarginNote}[1]{
    \ifthenelse{\value{DetailLevel}=\value{dtlMarginComments} \or \value{DetailLevel}>\value{dtlMarginComments}}
        {{\small #1}}{}
    }

\newcommand{\DetailSome}[1]{
    \ifthenelse{\value{DetailLevel}=\value{dtlSomeDetail} \or \value{DetailLevel}>\value{dtlSomeDetail}}
        {\begin{quote}{\small \textbf{Detailed compile only}: #1}\end{quote}}{}
    }

\newcommand{\DetailFull}[1]{
    \ifthenelse{\value{DetailLevel}=\value{dtlFullDetails} \or \value{DetailLevel}>\value{dtlFullDetails}}
        {{\small \textbf{Detailed compile only}: #1}}{}
    }

\newcommand{\NotDetailSome}[1]{
    \ifthenelse{\value{DetailLevel}=\value{dtlSomeDetail} \or \value{DetailLevel}>\value{dtlSomeDetail}}
        {}{#1}
    }

\newcommand{\NotDetailFull}[1]{
    \ifthenelse{\value{DetailLevel}=\value{dtlFullDetails} \or \value{DetailLevel}>\value{dtlFullDetails}}
        {}{#1}
    }

\newcommand{\DetailSomeElse}[2]{
    \ifthenelse{\value{DetailLevel}=\value{dtlSomeDetail} \or \value{DetailLevel}>\value{dtlSomeDetail}}
        {{\small \textbf{Detailed compile only}: #1}}{#2}
    }

\newcommand{\DetailFullElse}[2]{
    \ifthenelse{\value{DetailLevel}=\value{dtlFullDetails} \or \value{DetailLevel}>\value{dtlFullDetails}}
        {{\small \textbf{Detailed compile only}: #1}}{#2}
    }

%
% Inline versions of the conditional compilation commands.
%
\newcommand{\DetailSomeInline}[1]{
    \ifthenelse{\value{DetailLevel}=\value{dtlSomeDetail} \or \value{DetailLevel}>\value{dtlSomeDetail}}
        {{\small #1}}{}
    }

\newcommand{\DetailFullInline}[1]{
    \ifthenelse{\value{DetailLevel}=\value{dtlFullDetails} \or \value{DetailLevel}>\value{dtlFullDetails}}
        {{\small #1}}{}
    }

\newcommand{\DetailSomeElseInline}[2]{
    \ifthenelse{\value{DetailLevel}=\value{dtlSomeDetail} \or \value{DetailLevel}>\value{dtlSomeDetail}}
        {{\small #1}}{#2}
    }

\newcommand{\DetailFullElseInline}[2]{
    \ifthenelse{\value{DetailLevel}=\value{dtlFullD	etails} \or \value{DetailLevel}>\value{dtlFullDetails}}
        {{\small #1}}{#2}
    }

\newcommand{\ExplainDetailLevel}{
    Detail level is
    \ifthenelse{\value{DetailLevel}=\value{dtlForSubmission}}
        {0: for submission}
        {\ifthenelse{\value{DetailLevel}=\value{dtlMarginComments}}
            {1: as for submission but with margin comments}
            {\ifthenelse{\value{DetailLevel}=\value{dtlSomeDetail}}
                {2: some proofs not intended for submission}
               {\ifthenelse{\value{DetailLevel}=\value{dtlFullDetails}}
                   {3: full details}
                   {invalid}
                }
            }
        }
    }

%
% End onditional compilation macros for including various levels of detail
%
%============================================================

%
% Set to 1 only if including speculative work on additional constraint approach as well.
%
\newcounter{DoAdditionalConstraint} \setcounter{DoAdditionalConstraint}{0}

\newcommand{\AdditionalConstraint}[2]{
    \ifthenelse{\value{DoAdditionalConstraint}=1}
        {{\small #1}}{#2}
    }

\newtheorem{theorem}{Theorem}[section]
\newtheorem{prop}[theorem]{Proposition}
\newtheorem{lemma}[theorem]{Lemma}
\newtheorem{cor}[theorem]{Corollary}

\theoremstyle{definition}
\newtheorem{definition}[theorem]{Definition}

\newtheorem{remark}[theorem]{Remark}

\numberwithin{equation}{section}

%    Absolute value notation
\newcommand{\abs}[1]{\left\vert#1\right\vert}

%
% Produce a single blank line. Need to figure out the variable name
% for the point-size, which is an option on the \documentclass call,
% so this command will work correctly when the font size is changed.
%
\newcommand{\skipline}{\vspace{11pt}}

%    Blank box placeholder for figures (to avoid requiring any
%    particular graphics capabilities for printing this document).

%
%
%
\newcommand{\BB}[1]{\ensuremath{\mathbb{#1}}}

\newcommand{\R}{\ensuremath{\BB{R}}}

\newcommand{\iny}{\ensuremath{\infty}}
\newcommand{\grad}{\ensuremath{\nabla}}
\ifthenelse{\value{HaveBBM}=1}{
    
    }{
    
    }
\DeclareMathOperator{\dv}{div} %
\DeclareMathOperator{\curl}{curl} %
\DeclareMathOperator{\supp}{supp} %
\newcommand{\prt}{\ensuremath{\partial}}
\newcommand{\brac}[1]{\ensuremath{\left[ #1 \right]}}
\newcommand{\pr}[1]{\ensuremath{\left( #1 \right) }}
\DeclarePairedDelimiter{\set}{\{}{\}}

\DeclarePairedDelimiter{\setMultiLine}{\{}{\}}
\newcommand{\norm}[1]{\ensuremath{\left\Vert #1 \right\Vert}}

\newcommand{\smallnorm}[1]{\ensuremath{\Vert #1 \Vert}}

%--- ref macros
\newcommand{\refA}[1]{Appendix~\ref{A:#1}}
\newcommand{\refS}[1]{Section~\ref{S:#1}}

\newcommand{\refT}[1]{Theorem~\ref{T:#1}}
\newcommand{\refTAnd}[2]{Theorems~\ref{T:#1} and \ref{T:#2}}

\newcommand{\refP}[1]{Proposition~\ref{P:#1}}
\newcommand{\refPAnd}[2]{Propositions~\ref{P:#1} and \ref{P:#2}}
\newcommand{\refPThrough}[2]{Propositions~\ref{P:#1} through \ref{P:#2}}
\newcommand{\refL}[1]{Lemma~\ref{L:#1}}

\newcommand{\refD}[1]{Definition~\ref{D:#1}}

\newcommand{\refE}[1]{(\ref{e:#1})}
\newcommand{\refEAnd}[2]{(\ref{e:#1}, \ref{e:#2})}

\newcommand{\refEThrough}[2]{(\ref{e:#1}-\ref{e:#2})}
\newcommand{\refR}[1]{Remark~\ref{R:#1}}

%
% Experimental ref macros. These are defined to include only the encapsulating
% symbols for the reference, leaving the user to enter the \ref{} manually.
% This will allow a text editor that support smart entry of \ref's to do its work.
% So, for instance, one would enter \E{\ref{zzz}} to refer to the equation having
% label, zzz.

\newcommand{\eps}{\ensuremath{\varepsilon}}
\newcommand{\Cal}[1]{\ensuremath{\mathcal{#1}}}
\newcommand{\al}{\ensuremath{\alpha}}

\newcommand{\ol}{\overline}

%--- Fourier transform when argument is too long for \widehat

%--- Inverse Fourier transform
\newcommand{\FTR}
    {\Cal{F}^{-1}}

\newcommand{\ThmProp}[1]{\textit{(#1)}}
\newcommand{\ThmPropThrough}[2]{\textit{(#1)}-\textit{(#2)}}

\newcommand{\LP}{\ensuremath{\Cal{P}}} %

%--- \mathop can also be used to define a usable \limsup, for instance, to place limits below
\newcommand{\stardot}{\mathop{* \cdot}}

\begin{document}

\raggedbottom
% \reversemarginpar

% \numberwithin{equation}{section}

%--- From Gautam
\newcommand{\SideNote}[1]{
	    \ifthenelse{\value{DetailLevel}=\value{dtlMarginComments}
	    	\or \value{DetailLevel}>\value{dtlMarginComments}}
		{\pdfsyncstop\marginpar[\raggedleft\tiny #1]{\raggedright\tiny
		#1}\pdfsyncstart}
		{}
		}

\newcommand{\AuthorNote}[3]{\SideNote{\textcolor{blue}{(#1 #2):} #3}}

\newcommand{\JimNote}[2]{\AuthorNote{Jim}{#1}{#2}}

\newcommand{\MarginNote}[1]{\SideNote{#1}}

\newcommand{\ToDo}[1]{
	    \ifthenelse{\value{DetailLevel}=\value{dtlMarginComments}
	    	\or \value{DetailLevel}>\value{dtlMarginComments}}
		{\textbf{[#1]}}
		{}
		}

%--- Old definition
%\newcommand{\MarginNote}[1]{
%    \marginpar{
%        \begin{flushleft}
%            \footnotesize #1
%        \end{flushleft}
%        }
%    }

\newcommand{\Detail}[1]{
    \MarginNote{Detail}
    \skipline
    \hspace{+0.25in}\fbox{\parbox{4.25in}{\small #1}}
    \skipline
    }

%--- Holder, with an umlaut over the "o"
\newcommand{\Holder}
    {H\"{o}lder\xspace}

%--- Holder's, with an umlaut over the "o"
\newcommand{\Holders}
    {H\"{o}lder's }

\newcommand{\wh}{\widehat}
 
\newcommand{\n}{\bm{n}}

\newcommand{\EqNum}[1]{\buildrel {#1}\over =}

\newcommand{\Ignore}[1] {}

%--- What it was originally
% \newcommand{\HelenasDualSpace}{W^{-1,\iny}_\sigma}
%--- What I think Helena would prefer
\newcommand{\HelenasDualSpace}{(W^{1,1}_{\sigma})^*}

%
%--------------------- Commands specific to this paper -----------------
%

%
%-------------------- Redefined from preamble ---------------------------
%
% \renewcommand{\refE}[1]{(\ref{e:#1})}

%--- Information for alphabetically first author
\author[Kelliher]{James P. Kelliher}
% --- Address of record for the research reported here
\address{Department of Mathematics, University of California, Riverside, 900 University Ave.,
Riverside, CA 92521}
%--- Current address
 %\curraddr{Department of Mathematics, University of California, Riverside, 900 University Ave., Riverside, CA 92521}
\email{kelliher@math.ucr.edu}

\subjclass[2010]{Primary 76B03} % ; Secondary }
% \date{} %  and, in revised form, .}

\keywords{Fluid mechanics, Euler equations}

\title
    [Bounded vorticity, bounded velocity 2D Euler]
    {A characterization at infinity of bounded vorticity, bounded velocity solutions to the 2D Euler equations}
     
\begin{abstract}
We characterize the possible behaviors at infinity of weak solutions to the 2D Euler equations in the full plane having bounded velocity and bounded vorticity. We show that any such solution can be put in the form obtained by Ph. Serfati in 1995 after a suitable change of reference frame. Our results build on those of a recent paper of the author's, joint with Ambrose, Lopes Filho, and Nussenzveig Lopes.
\end{abstract}

\date{(compiled on \today)}

\maketitle

\vspace{-0.3in}

\Ignore{ % Ignore
\begin{small}
    \begin{flushright}
        % Completed \textit{\textbf{13 January 2006}}; c
        Compiled on \textit{\textbf{\today}}
    \end{flushright}
\end{small}
} % End Ignore

%--- Give date and time and detail level UNLESS for submission
\DetailMarginNote{
    \begin{small}
        \begin{flushright}
            Compiled on \textit{\textbf{\today}}

            \ExplainDetailLevel

        \end{flushright}
    \end{small}
    }
    
\newcommand{\BoxComment}[1]{
    \skipline
    \hspace{+0.25in}\fbox{\parbox{4.25in}{\small #1}}
    \skipline
    }
    
\vspace{-0.40in}

%-----------table of contents----------------------
\renewcommand\contentsname{}   % Remove the label, "Contents," on the table of contents
{\small
\tableofcontents
}

\newpage

\Ignore{ % Ignore - switched to subsections rather than parts
\bigskip
\begin{center}
\textbf{Overview}
\phantomsection   % So clicking on table of contents entry will go to the correct page
\addcontentsline{toc}{chapter}{Overview}
\end{center}
} % End Ignore

%
% Section
%
\section{Introduction}\label{S:Introduction}

\noindent In classical form, the Euler equations (without forcing) can be expressed as
\begin{align}\label{e:EClassical}
    \begin{split}
        \left\{
        \begin{array}{rl}
            \prt_t u + u \cdot \grad u + \grad p &= 0, \\
            \dv u &= 0, \\
            u(0) &= u^0.
        \end{array}
        \right.
    \end{split}
\end{align}
Here, $u$ is a velocity field, $p$ is a scalar pressure field, and the initial velocity, $u^0$, is assumed to be divergence-free. We are concerned here exclusively with solutions in the full plane.

The nature of the solutions to these equations will depend strongly on the function spaces to which the initial data belongs. For functions spaces for which well-posedness results are known, nearly all studies
% in the full plane
have assumed that the vorticity, $\omega = \curl u := \prt_1 u^2 - \prt_2 u^1$, decays at infinity rapidly enough that the velocity can be recovered from the vorticity via the Biot-Savart law,
\begin{align*}
    u = K * \omega,
\end{align*}
where $K$ is the Biot-Savart kernel (see \refE{K}). One commonly imposed condition that insures this is that $\omega \in L^{p_1} \cap L^{p_2}$ for some $p_1 < 2 < p_2$, in which case the velocity will also decay at infinity. (The Biot-Savart law can hold with some decay of the vorticity but without decay of the velocity at infinity, and solutions to the Euler equations can still be obtained: see \cite{Brunelli2010}.)

We will be concerned here with initial data for which the Biot-Savart law does not hold, treating the case where the vorticity and velocity are both bounded: what we call \textit{bounded} solutions. The construction of such solutions in the full plane was first decribed by Ph. Serfati in \cite{Serfati1995A}, proven in more detail in \cite{AKLL2014} (including the case of an exterior domain). An alternate construction, relying upon another Serfati paper, \cite{Serfati1995B}, was given by Taniuchi in \cite{Taniuchi2004}.

In each of \cite{Serfati1995A, Taniuchi2004, AKLL2014}, however, the behavior at infinity of a solution was assumed either implicitly or explicitly. Identical assumptions, on the velocity, are made in \cite{Serfati1995A, AKLL2014}, while \cite{Taniuchi2004} makes an assumption on the pressure. (We describe these assumptions in detail below.) These assumptions are a priori, in that they are used in the construction of the solutions. The first purpose of this work is to characterize a postierori all possible behaviors of bounded solutions at infinity, so as to avoid the need for such assumptions a priori.

The second purpose of this work is to show that, in fact, the bounded solutions constructed in \cite{Serfati1995A, AKLL2014} are identical to those constructed in \cite{Taniuchi2004}. This will require us to obtain the properties of the pressure for the solutions constructed in \cite{Serfati1995A, AKLL2014} and show that they match those of \cite{Taniuchi2004}.

\Ignore { % Less direct motivation
\noindent A complete formulation of the solution to a PDE in an unbounded domain must include some condition on the behavior at infinity.
% To properly formulate the solution to a PDE in an unbounded domain the behavior at infinity of the solution must be to some degree constrained.
Such a ``boundary condition at infinity'' allows, under suitable additional assumptions, for both existence and uniqueness of solutions.

% Classically, a PDE prescribes precisely the local behavior of a solution. In a domain with boundary, in time or space, boundary conditions must be imposed to insure that the boundary value or Cauchy problem for the PDE is well-posed. There are general criteria, at least for linear PDEs, for what types of boundary conditions are consistent with the PDE, and so can lead to well-posedness. In unbounded domains, some constraint on the behavior at infinity is also required to insure uniqueness.

A condition at infinity is often chosen for mathematical expedience; in the extreme case, compact support of the solution
% or some key quantity derived from it
is assumed. This
% can be reasonable, as it
focuses the attention on the local behavior of a solution, which is generally of central importance. It prevents a full understanding, however, of what it means to solve the PDE.

It would be preferable to first characterize what types of behavior at infinity are consistent with the PDE, just as is done with boundary conditions.
% the prescribed local behavior of a solution.
Only then
% after knowing what behaviors at infinity are possible
would a condition at infinity be selected to insure well-posedness. Ideally, such a condition would also be physically justified. Without characterizing all possible behaviors at infinity, however, one cannot assess the degree to which physical and mathematical necessity align.

% There is not always a clear distinction between a condition at infinity and local behavior. For instance, the function space in which a solution lies is always part of a proper formulation, and this function space usually restricts to some degree the behavior at infinity. So, for instance, if the solution is to lie in $L^\iny$ then it cannot blow up at infinity. But membership in $L^\iny$ is really a locally uniform constraint on the size of a solution. :::

In this paper, we perform such a characterization at infinity for one fairly involved example: weak solutions to the 2D Euler equations in the plane having bounded vorticity and bounded velocity (what we call \textit{bounded solutions}). We note that the mere membership of the solution in the required function space already poses a constraint at infinity; namely, that the solution remain bounded. But membership in $L^\iny$ is really a locally uniform constraint on the size of the solution and is, in any case, an insufficient condition at infinity, as we shall see (the full condition is given in \refE{CharacterizationR2}). 
 
In classical form, the Euler equations (without forcing) can be expressed as
\begin{align}\label{e:EClassical}
    \begin{split}
        \left\{
        \begin{array}{rl}
            \prt_t u + u \cdot \grad u + \grad p &= 0, \\
            \dv u &= 0, \\
            u(0) &= u^0.
        \end{array}
        \right.
    \end{split}
\end{align}
Here, $u$ is a velocity field, $p$ is a scalar pressure field, and the initial velocity, $u^0$, is assumed to be divergence-free.
% We will be considering the Euler equations in the full plane, so there are no boundary conditions.
For our definition of a bounded weak Euler solution we make the near minimal assumptions that it satisfy the equations (in vorticity form) weakly, the velocity be continuous in time, and the vorticity be transported by the flow (\refD{ESol}).
} % End Ignore Less direct motivation

% This paper is part one of two part series, the second considering the same equations in the exterior to a single obstacle. ::::::

To understand what types of behavior at infinity we might expect, consider
the following two classical solutions $(u_1, p_1)$ and $(u_2, p_2)$ to \refE{EClassical}:
\begin{align}\label{e:JunKatoExamples}
    \begin{array}{ll}
        u_1(t, x) = u^0 + U_\iny(t), &p_1(t, x) = - U_\iny'(t) \cdot x, \\   
        u_2(t, x) = u^0, &p_2(t, x) = 0.
    \end{array}   
\end{align}
Here, $U_\iny$ is any differentiable vector-valued function of time for which $U_\iny(0) = 0$.
Both are easily verified to be solutions to the Euler (and, for that matter, Navier-Stokes) equations as in \refE{EClassical} with the same initial velocity, $u^0$. In \cite{GKKM2001, JunKato2003}, the authors use these examples to make the point that to insure solutions are unique, some condition on the pressure must be imposed for solutions to the Navier-Stokes equations in the plane.
%  having bounded velocity.
 
Here, we draw a different lesson from this example, one that leads to a characterization of all possible bounded solutions to the Euler equations. We prove that the solution's behavior at infinity is of necessity very much like that of $(u_1, p_1)$.

Specifically, for solutions in the full plane, we show that there exists some continuous vector-valued function of time, $U_\iny$, with $U_\iny(0) = 0$, for which 
\begin{align}\label{e:CharacterizationR2}
	\begin{split}
		u(t, x) - u(0, x) &= U_\iny(t)
		    + \lim_{R \to \iny} (a_R K) * (\omega(t) - \omega(0))(x), \\
		\grad p(t, x) &= - U_\iny'(t) + O(1), \\
		p(t, x) &= - U_\iny'(t) \cdot x + O(\log \abs{x}),
	\end{split}
\end{align}
the explicit expression for the $O(1)$ (in $\abs{x}$) function being given in \refE{gradpR2}.
In \refE{CharacterizationR2}, $\omega(t) = \prt_1 u^2(t) - \prt_2 u^1(t)$ is the vorticity (scalar curl) of the velocity field $u(t)$, $K$ is the Biot-Savart kernel (see \refE{K}), and $a_R$ is any cutoff function with support increasing to infinity with $R$, as in \refD{RadialCutoff}. % The pressure, we note, does obtain a value at infinity in a strong sense, however, for $U_\iny'(t) \cdot x$ is of a higher order than $O(\log \abs{x})$ unless it is zero, in which case it is of lower order.
The time derivative on $U_\iny$ in $\refE{CharacterizationR2}_{2, 3}$ is a distributional derivative.
% (The pressure is only of real interest, however, when $U_\iny$ is sufficiently regular that $U_\iny'$ is at least defined for almost all time.)

To explain what $\refE{CharacterizationR2}_1$ means, we need one basic fact concerning the Biot-Savart law: If $\omega \in L^1 \cap L^\iny(\R^2)$ then
$
    u = K * \omega
$
is the unique, divergence-free vector field vanishing at infinity whose vorticity is $\omega$.
% We discuss these issues at greater length in \refSAnd{Results}{Background}, but this one fact will suffice for now.

The condition that $\omega$ be in $L^1 \cap L^\iny$ can be weakened, but some decay at infinity is required for the Biot-Savart law to hold. Hence, we have no hope of applying the Biot-Savart law for our solutions, as we wish to assume no decay of vorticity. But we will discover a replacement for the Biot-Savart law that will work, and name it the \textit{renormalized Biot-Savart law}, defined as follows:
\begin{quote}
We say that the renormalized Biot-Savart law holds for a vector field, $v$, if there exists a constant vector field, $H$, such that
\begin{align}\label{e:RenBSLaw}
    v = H + \lim_{R \to \iny} (a_R K) * \omega(v)
\end{align}
pointwise in $\R^2$, where $\omega(v) := \prt_1 v^2 - \prt_2 v^1$.\end{quote}

\medskip

When $\omega(v)$ has sufficient decay at infinity, \refE{RenBSLaw} holds without the need for a cutoff function: we simply obtain $v = H + K * \omega$, with $H$ being the value of $v$ at infinity.

The relation in $\refE{CharacterizationR2}_1$, then, says that the renormalized Biot-Savart law holds for the vector field $u(t) - u(0)$ at any time, $t$, with $H = U_\iny(t)$.

\Ignore{ % Ignore noise about weak value at infinity
Now, $u(t) - u(0)$ has no clearly defined value at infinity, for
\begin{align*}
    \lim_{\abs{x} \to \iny}
        \lim_{R \to \iny} (a_R K) * (\omega(t) - \omega(0))(x)
\end{align*}
does not, in general, exist. But in the sequence of approximate solutions, $(u_n)$, used to construct $u$ (see \refS{ExistenceR2}), we do have
\begin{align*}
    \lim_{n \to \iny}
        \lim_{\abs{x} \to \iny} (u_n(t, x) - u_n(0, x)) = U_\iny(t).
\end{align*}
Because of this, we will call $U_\iny(t)$ the \textit{weak value at infinity} for the velocity field, $u(t) - u(0)$, or say that $U_\iny(t)$ is the \textit{weak velocity at infinity relative to $u(0)$}.
} % End Ignore noise about weak value at infinity

The velocity field, $U_\iny$, can be eliminated in \refE{CharacterizationR2} (or in $\refE{JunKatoExamples}_1$) by changing to an accelerated frame of reference by the transformation,
\begin{align}\label{e:COV}
    \begin{split}
        \ol{x} &= \ol{x}(t, x) = x + \int_0^t U_\iny(s) \, ds, \\
        \ol{u}(t, x) &= u(t, \ol{x}) - U_\iny(t),
            \quad
        \ol{p}(t, x) = p(t, \ol{x}) + U_\iny'(t) \cdot x.
    \end{split}
\end{align}
% $U_\iny'(t) \cdot x$ could just as well be $U_\iny'(t) \cdot \ol{x}$
(See the first part of \refL{COVEquiv}.)
Note that this is a Galilean transformation when $U_\iny$ is constant in time.
Setting $\ol{\omega} = \omega(\ol{u})$, the chain rule gives $\ol{\omega}(t, x) = \omega(t, \ol{x})$, and it follows that
\begin{align}\label{e:CharacterizationR2NoUInf}
	\begin{split}
		\ol{u}(t, x) - \ol{u}(0, x)
		    &= \lim_{R \to \iny} (a_R K)
		        * (\ol{\omega}(t) - \ol{\omega}(0))(x), \\
		\grad \ol{p}(t, x) &= O(1),
		    \quad
		\ol{p}(t, x) = O(\log \abs{x}),
	\end{split}
\end{align}
and $(\ol{u}, \ol{p})$ satisfy the Euler equations in the sense of distributions. Physically, this reflects the fact that a change of frame by translation, even an accelerated translation, introduces a force that is a gradient, and so is absorbable into the pressure gradient.
\DetailSome{
    We have,
    \begin{align*}
        \prt_t \ol{u}(t, x)
            &= \prt_t u(t, \ol{x}) + Du(t, \ol{x}) D \ol{x} - U_\iny'(t) \\
            &= \prt_t u(t, \ol{x}) + U_\iny(t) \cdot \grad u(t, \ol{x})
                - U_\iny'(t),
    \end{align*}
    \begin{align*}
        \grad \ol{u}(t, x)
            &= \grad \ol{x} \grad u(t, \ol{x})
            = \grad u(t, \ol{x}),
    \end{align*}
    and
    \begin{align*}
        \grad \ol{p}(t, x)
            &= \grad p(t, x) + U_\iny'(t).
    \end{align*}
    Hence,
    \begin{align*}
        \prt_t \ol{u}(&t, x) + \ol{u}(t, x) \cdot \grad \ol{u}(t, x)
                + \grad \ol{p}(t, x) \\
            &= \prt_t u(t, \ol{x}) + U_\iny(t) \cdot \grad u(t, \ol{x})
                    - U_\iny'(t) \\
            &\qquad\qquad
                + \pr{u(t, \ol{x}) - U_\iny(t)} \cdot \grad u(t, \ol{x})
                + \grad p(t, x) + U_\iny'(t) \\
            &= \prt_t u(t, x) + u(t, x) \cdot \grad u(t, x) + \grad p(t, x)
            = 0.
    \end{align*}
}

Alternately, we can view solutions for which $U_\iny$ is not identically zero to be in an accelerated frame: we then move to an inertial frame, in which $U_\iny \equiv 0$, by the transformation above. Such solutions in an inertial frame are identical to those constructed by Serfati in \cite{Serfati1995A} (a more complete derivation appears in \cite{AKLL2014}). Observe as well that the two solutions in \refE{JunKatoExamples} are the same solution after the transformation in \refE{COV}.

That $U_\iny$ can be eliminated by changing frames in this way is an a posteriori conclusion reached only after establishing the existence of such a vector field for which \refE{CharacterizationR2} holds. Since we cannot transform $U_\iny$ away until we obtain it, obtaining it is unavoidable. Moreover, it is in demonstrating that \refE{CharacterizationR2} must hold for some $U_\iny$ that we say we \textit{characterize} solutions to the Euler equations at infinity.

To cast a different light on our characterization of solutions, consider the special case of sufficiently decaying (say, compactly supported) initial vorticity in the full plane. Then the classical Biot-Savart law applies, and $\refE{CharacterizationR2}_1$ reduces to
$
    u(t) = U_\iny(t) + K * \omega(t).
$
This gives the usual characterization of solutions to the 2D Euler equations for decaying vorticity whose velocity at infinity is $U_\iny$ (often chosen to be zero). Actually, this is not normally viewed as a characterization of the solution, but rather as a way of recovering the velocity from the vorticity, and so obtaining a formulation of the Euler equations solely in terms of the vorticity. This same point of view applies for our non-decaying bounded solutions as well (see \refR{CharacterizingSolutions}.)

Key to our characterization of the velocity field for a solution, $u$, to the 2D Euler equations in the full plane is the observation that any bounded velocity field, $v$, having bounded vorticity satisfies the renormalized Biot-Savart law \refE{RenBSLaw} for a subsequence, $(R_k)$. Applying this to $v = u(t) - u(0)$ and using properties of the Euler equations allows us to show that $\refE{CharacterizationR2}_1$ holds.

Having obtained the characterizations in $\refE{CharacterizationR2}_1$, the task of establishing existence and uniqueness immediately arises. We will find this task easy, however, because existence and uniqueness in the special case of $U_\iny \equiv 0$ was already proved in \cite{AKLL2014} (for both the full plane and the exterior of a single obstacle). The transformation in \refE{COV} makes this especially simple.

The characterizations in $\refE{CharacterizationR2}_1$ along with existence and uniqueness give a fairly complete picture of the velocity for bounded solutions to the Euler equations. For the pressure, we take a much different approach, for we will not find it possible to directly characterize the pressure as we did the velocity. Limiting us in this regard is the lack of decay at infinity of the velocity field (from which the pressure is ultimately derived).

Instead, we will show that the solutions we construct in our proof of existence also satisfy $\refE{CharacterizationR2}_{2, 3}$. We do this using the sequence of smooth approximate solutions, which decay sufficiently rapidly at infinity, and taking a limit. Because we have uniqueness of solutions using only $\refE{CharacterizationR2}_1$, it follows that $\refE{CharacterizationR2}_{2, 3}$ hold for all bounded solutions.

\bigskip

We say now a few words about works in the literature pertaining to bounded solutions to the 2D Euler equations and how they relate to this work.

Our proof of the existence and uniqueness of solutions in \refS{ExistenceR2} is a modest extension of the proof in \cite{AKLL2014}, which in turn builds on the approach in \cite{Serfati1995A}, where the existence and uniqueness of such solutions was first proved by Serfati in the full plane. Serfati's full-plane existence result was extended by Taniuchi in \cite{Taniuchi2004} to allow slightly unbounded vorticity (a localized version of the velocity fields treated by Yudovich in \cite{Y1995}), while Taniuchi with Tashiro and Yoneda in \cite{TaniuchiEtAl2010} established uniqueness (and more). In \cite{AKLL2014}, Serfati's result was obtained both for the full plane and for the exterior to a single obstacle.

In each of these papers, the solutions that were constructed had a special property that was used as a selection criterion to guarantee uniqueness. In \cite{Taniuchi2004, TaniuchiEtAl2010}, that property was that the pressure belong to $BMO$ and was given by a Riesz transform in the classical way. (This implies at most logarithmic growth of the pressure at infinity, as we show.) In \cite{AKLL2014}, an identity (\refE{SerfatiId}, below, with $U_\iny \equiv 0$) that we show is equivalent to $\refE{CharacterizationR2}_1$ was used. This identity, called the \textit{Serfati identity} here and in \cite{AKLL2014}, was implicitly used, though never explicitly stated, by Serfati both in the construction of a solution (in the full plane) and to establish uniqueness\footnote{Serfati seems to state that the sublinear growth of the pressure is his uniqueness criterion, but uses an estimate derived from the Serfati identity in his proof of uniqueness.}; the same is done, explicitly, in \cite{AKLL2014}. The desire to remove the need for this identity was one motivation for this paper.

Our characterization in \refE{CharacterizationR2} of solutions helps to clarify the roles played by these selection criteria in the full plane. Taniuchi, Tashiro, and Yoneda use, in effect, the selection criterion, $U_\iny' \equiv 0$, whereas, in \cite{AKLL2014}, the criterion is $U_\iny \equiv 0$. From our characterization in \refE{CharacterizationR2}, these are, in fact, equivalent, since $U_\iny(0) = 0$.

The proof of uniqueness in \cite{TaniuchiEtAl2010} is for the bounded solutions constructed by Taniuchi in \cite{Taniuchi2004}, which are not known to coincide with the solutions constructed in \cite{Serfati1995A, AKLL2014}. Also motivating this paper was the desire to show that the bounded solutions constructed in \cite{Serfati1995A, Taniuchi2004, AKLL2014} do, in fact, coincide. Accomplishing this requires us to obtain the pressure corresponding to a solution constructed in \cite{AKLL2014} and show that it has the same properties as those established in \cite{Taniuchi2004}.

In \refS{TTYWork}, we discuss further some issues related to \cite{Taniuchi2004, TaniuchiEtAl2010} that are best understood after the proof of our results have been presented. Further, in \refS{JunKato}, we discuss the relation of our approach to obtaining properties of the pressure with the approach taken by Jun Kato in \cite{JunKato2003} for solutions to the Navier-Stokes equations in the plane for bounded initial velocity.
 
The vanishing viscosity limit of the Navier-Stokes equations to the Euler equations has been studied for bounded solutions in \cite{Cozzi2009, Cozzi2010, Cozzi2013}.
%, well-posedness of the Navier-Stokes equations for bounded initial velocity in the full plane having been established in \cite{GKM1999, GMS2001}.

\bigskip

This paper is organized as follows:

In \refS{Results} we define our bounded solutions to the 2D Euler equations and state our main results. We summarize some background facts and definitions in \refS{Background} that we will use throughout the paper.

In \refS{CharacterizationSectionR2}, we characterize bounded solutions for the full plane, giving the proof of existence and uniqueness in \refS{ExistenceR2}. In \refS{PressureR2}, we establish the properties of the pressure for the full plane. The formula for the pressure gradient in the full plane is the same as that in \cite{Serfati1995B}, and is based on the Green's function for the Laplacian. The most delicate estimates, those characterizing the behavior of the pressure itself at infinity, we obtain using a Riesz transform. These estimates are presented in \refS{PoissonFullPlane}.

In \refS{Afterword}, we make a few final comments concerning the nature of the weak solutions we have defined.
% , and discuss our results in relation to the literature. 
\refA{SomeLemmas} contains several lemmas we use elsewhere in this paper.

%
% Section
%
\section{Statement of results}\label{S:Results}

\noindent
Before stating our results, we must make several definitions.

For a velocity field, $u$, the vorticity, $\omega(u) = \curl(u) := \prt_1 u^2 - \prt_2 u^1$.

Let $G(x, y) = (2 \pi)^{-1} \log \abs{x - y}$, the Green's function for the Laplacian in the full plane. Then the Biot-Savart kernel in the full plane is given by
\begin{align}\label{e:K}
	K(x)
	    = \grad^\perp G(x)
	    = \frac{1}{2 \pi} \frac{x^\perp}{\abs{x}^2},
\end{align}
where $\grad^\perp := (-\prt_2, \prt_1)$ and $x^\perp := (-x_2, x_1)$. When $\omega$ is a compactly supported, bounded scalar field, we define
\begin{align}\label{e:BSLawFullPlane}
    K[\omega]
        % = K_{\R^2}[\omega]
        = K * \omega.
\end{align}
Then $K[\omega]$ is the unique, divergence-free vector field vanishing at infinity whose vorticity is $\omega$.

\begin{definition}\label{D:SerfatiVelocity}
	We say that a divergence-free vector field, $u \in L^\iny(\R^2)$, with vorticity,
	$\omega(u) \in L^\iny(\R^2)$
	is a \textit{Serfati velocity}.
	We call the space of all such vector fields, $S = S(\R^2)$, with the norm,
	\begin{align*}
		\norm{u}_S
			= \norm{u}_{L^\iny} + \norm{\omega(u)}_{L^\iny}.
	\end{align*}
\end{definition}

\begin{definition}\label{D:SerfatiConvergence}	
	We say that a sequence, $(u_n)$, in $L^\iny(0, T; S)$
	\textit{converges locally in $S$} if for any compact subset,
	$L$, of $\R^2$,
	\begin{align*}
	    \norm{u_n - u}_{L^\iny([0, T] \times L)}
	        + \norm{\omega(u) - \omega(u_n)}_{L^\iny([0, T] \times L)}
	        \to 0. 
    \end{align*}	
\end{definition}

We will use the following definition for solutions in the full plane:

\begin{definition}\label{D:ESol}
	Fix $T > 0$. 
	We say that a velocity field, $u$, lying in $L^\iny(0, T; S) \cap
	C([0, T] \times \R^2)$
	having vorticity, $\omega = \omega(u)$,
	is a \textit{bounded solution} to the Euler equations
	without forcing if, on the interval, $[0, T]$,
	$
		\prt_t \omega + u \cdot \grad \omega
			= 0
	$
	as distributions on $(0, T) \times \R^2$ and the vorticity is
	transported by the flow map. 
	%  (see \refR{FlowMap}).
\end{definition}

\begin{remark}\label{R:FlowMap}
Because the velocity, $u$, of \refD{ESol} lies in $L^\iny(0, T; S) \cap C([0, T] \times \R^2)$, it follows from \refL{Morrey} that $u$ has a spatial log-Lipschitz modulus of continuity (MOC) with a uniform bound over $[0, T]$ and thus that it has a unique classical flow map; hence, the existence of a flow map need not be made a requirement in \refD{ESol}.
\end{remark}

\begin{remark}\label{R:CharacterizingSolutions}
The vorticity equation, $\prt_t \omega + u \cdot \grad \omega = 0$, in \refD{ESol} is not a vorticity formulation, since we do not specify how the velocity field is recovered from the vorticity. Indeed, the key fact we show in this paper is that the membership of $u(t)$ in $S$ forces the recovery of the velocity from the vorticity to take place in the specific manner given by $\refE{CharacterizationR2}_1$  (more precisely stated in \refT{CharacterizationR2}). The only freedom is the choice of $U_\iny$. (We can use this observation to define a vorticity formulation, as we explain in \refS{VorticityFormulation}.)
\end{remark}

\begin{definition}\label{D:RadialCutoff}
    Let $a$ be a radially symmetric, smooth, compactly supported
    function with $a = 1$ in a neighborhood of the origin. We will
    refer to such a function simply as a \textbf{radial cutoff
    function}. For any $R > 0$ we define
    \begin{align*}
        a_R(\cdot) = a(\cdot/R).
    \end{align*}
\end{definition}

\begin{definition}\label{D:stardot}
	For $v$, $w$ vector fields, we define
	$
		v \stardot w
			= v^i * w^i.
	$
	For $A$, $B$ matrix-valued functions on $\R^2$, we define
	$
		A \stardot B
			= A^{ij} * B^{ij}.
	$
	Here, and throughout this paper, we use the convention that repeated
	indices are summed over.
\end{definition}

\bigskip

Our main results are \refTAnd{CharacterizationR2}{ExistenceR2}.

\begin{theorem}[Characterization of solutions]\label{T:CharacterizationR2}
	Suppose that $u$ is a solution to the Euler equations as in
	\refD{ESol} in the full plane
	with initial velocity, $u(t = 0) = u^0 \in S$, and initial vorticity,
	$\omega^0 = \omega(u^0)$.
	There exists $U_\iny \in (C[0, T])^2$ with $U_\iny(0) = 0$,
	such that each of the following holds:
	
	\noindent
	(i) Serfati identity: for $j = 1$, $2$,
	\begin{align}\label{e:SerfatiId}
		\begin{split}
			u^j(t&) - (u^0)^j
				= U^j_\iny(t)
					+ (a K^j) *(\omega(t) - \omega^0) \\
				&\qquad- \int_0^t \pr{\grad \grad^\perp \brac{(1 - a) K^j}}
				\stardot (u \otimes u)(s)
							\, ds.
		\end{split}
	\end{align}
	
	\noindent
	(ii) Renormalized Biot-Savart law:
	\begin{align}\label{e:RenormalizedBS}
		\begin{split}
			u(t) - u^0
				= U_\iny(t)
					+ 
					\lim_{R \to \iny} (a_R K) *(\omega(t) - \omega^0)
		\end{split}
	\end{align}
	on $[0, T] \times \R^2$ for all radial cutoff
	functions, $a$, as in \refD{RadialCutoff}.
	The convergence in \refE{RenormalizedBS}
	is locally uniform in $S$ as in \refD{SerfatiConvergence}.
	
	\noindent
	(iii) There exists
	a pressure field, $p \in \Cal{D}'((0, T) \times \R^2)$
	with $\grad p + U_\iny'$ lying
	in $L^\iny([0, T] \times \R^2)$, such that
    \begin{align}\label{e:VelocityEquationR2}
        \prt_t u + u \cdot \grad u + \grad p = 0  
    \end{align}
	as distributions on $(0, T) \times \R^2$. Here,
	$\prt_t u - U_\iny'\in L^\iny(0, T; L^p_{loc}(\R^2))$
	for all $p$ in $[1, \iny)$.
	(Note that $U_\iny' \in (\Cal{D}'((0, T)))^2$.)
	
	\noindent
	(iv)
	For any radial cutoff function, $a$, as in \refD{ESol},
	\begin{align}\label{e:gradpR2}
		\begin{split}
		\grad p(t, x)
			&= -U_\iny'(t)
				+ \int_{\R^2} a(x - y) K^\perp(x - y)
				    \dv \dv (u \otimes u)(t, y) \, dy \\
			&\quad
	 			+ \int_{\R^2} (u \otimes u)(t, y) \cdot
					\grad_y \grad_y \brac{(1 - a(x - y))
					K^\perp(x - y)}
					\, dy.
		\end{split}
	\end{align}
	Also,
	$\norm{\grad p(t) + U_\iny'(t)}_{L^\iny}
	\le C \smallnorm{u^0}_S^2$.
	
	\noindent
	(v) Pressure growth at infinity: The pressure, $p$,
	can be chosen so that
	\begin{align}\label{e:pRieszRel}
        p = -U_\iny' \cdot x
			        - R (u \otimes u),
    \end{align}
    where $R = \Delta^{-1} \dv \dv$ is a Riesz transform on
    $2 \times 2$ matrix-valued functions on $\R^2$.
    Moreover,
    \begin{align}\label{e:pBMO}
        p(t, x) &+ U_\iny'(t) \cdot x \in L^\iny([0, T]; BMO)   
    \end{align}
    with
	\begin{align}\label{e:pBoundR2}
	    	p(t, x)
	    	    &= -U_\iny'(t) \cdot x + O(\log \abs{x}),
	\end{align}
\end{theorem}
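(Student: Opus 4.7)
The plan is to establish the five assertions in the order (ii), (i), (iii)--(iv), (v), since part (ii) produces the vector field $U_\iny$ that appears in every subsequent statement.

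For part (ii), I would fix $t \in [0,T]$ and apply a renormalized Biot-Savart principle to the vector field $v := u(t) - u^0$. Because $u \in L^\iny(0,T;S) \cap C([0,T] \times \R^2)$, this $v$ is bounded, divergence-free, and has bounded vorticity $\omega(t) - \omega^0$. A general lemma, announced in the introduction and established separately in the paper, guarantees that any such field satisfies $v = H + \lim_{R\to\iny}(a_R K) * \omega(v)$ pointwise in $\R^2$ for some constant vector $H$, at least along a subsequence of cutoff scales. Defining $U_\iny(t)$ to be that constant yields \refE{RenormalizedBS}. Independence of $U_\iny(t)$ from the subsequence and from the radial cutoff $a$ follows by evaluating the identity at two fixed points and differencing. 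The normalization $U_\iny(0) = 0$ is immediate from $v \equiv 0$ at $t = 0$, and continuity of $U_\iny$ in $t$ follows from the joint continuity of $u$ in $(t,x)$. The locally uniform convergence in $S$ then reduces to a uniform estimate on the tail $((1 - a_R)K) * (\omega(t) - \omega^0)$.

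For part (i), I would specialize \refE{RenormalizedBS} to a single fixed cutoff $a$. The local piece $(aK) * (\omega(t) - \omega^0)$ appears directly in \refE{SerfatiId}. The complementary far piece is handled by using transport of vorticity to write $\omega(t) - \omega^0 = -\int_0^t u \cdot \grad \omega \, ds$, and then invoking the identity $u \cdot \grad \omega = \curl \dv(u \otimes u)$, valid for divergence-free $u$. Two integrations by parts move these derivatives off of $u \otimes u$ and onto the smooth kernel $(1-a)K^j$, producing $\int_0^t \pr{\grad \grad^\perp [(1 - a)K^j]} \stardot (u \otimes u)(s) \, ds$. These integrations by parts are justified because $\grad\grad^\perp [(1-a)K^j]$ is smooth, supported away from the origin, and decays like $|x|^{-3}$ at infinity.

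For parts (iii) and (iv), starting from the formal equation $\grad p = -\prt_t u - u \cdot \grad u$, I would take the divergence to obtain $-\Delta p = \dv \dv(u \otimes u)$ and invert via the Green's function $G$, splitting $G = aG + (1-a)G$. Transferring $\dv\dv$ onto $u \otimes u$ in the local piece and onto the kernel in the far piece yields the two integral terms in \refE{gradpR2}. The $-U_\iny'(t)$ summand reflects the fact that $\prt_t u$ has a nonvanishing weak value at infinity equal to $U_\iny'(t)$, so the pressure gradient absorbs the compensating constant. The $L^\iny$ bound on $\grad p + U_\iny'$ follows from the representation: each integral is controlled by $\norm{u}_{L^\iny}^2$ together with appropriate $L^1$ bounds on the (derivatives of) truncated kernels, and $\norm{u}_{L^\iny}$ is in turn controlled by $\smallnorm{u^0}_S$ via transport of vorticity.

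The main obstacle is part (v), the $BMO$ property and logarithmic growth of the pressure itself. My plan is to define $p(t) := -U_\iny'(t) \cdot x - R(u \otimes u)(t)$ with $R = \Delta^{-1} \dv \dv$, and to show that its gradient coincides with the formula in \refE{gradpR2}, thereby verifying \refE{pRieszRel} consistently with \refE{VelocityEquationR2}. Since $u \otimes u \in L^\iny$, Calder\'on--Zygmund theory gives $R(u \otimes u) \in L^\iny(0,T; BMO)$, yielding \refE{pBMO}; the bound \refE{pBoundR2} then follows from the John--Nirenberg inequality by telescoping averages across dyadic balls centered at the origin. The delicate point is the precise sense in which $R$ is defined on $L^\iny$ matrix fields (only modulo constants) and the justification of interchanging $\grad$ with the principal-value integral defining $R$; these estimates are the content of \refS{PoissonFullPlane}, and their application to the nonlinearity $u \otimes u$ is the main technical step.
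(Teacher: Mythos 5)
Your outline has the right ingredients for \textit{(i)}--\textit{(ii)} (the subsequence renormalized Biot--Savart law for elements of $S$, plus integration by parts via transport of vorticity), but the logical order you propose --- establish \textit{(ii)} first and then deduce \textit{(i)} --- has a genuine gap. From membership of $u(t)-u^0$ in $S$ alone (\refP{NearRenormalizedConvergence}), you only get convergence along a subsequence $(R_k)$, and a priori different subsequences can produce different constants $H$; the commented-out remark in the source even alludes to ``ripples of increasingly large scale'' in the vorticity for which this happens. Your proposed fix --- evaluating at two points and differencing --- cancels $H$ entirely and therefore cannot identify it, so it does not give subsequence-independence; nor does continuity of $u$ give continuity of $U_\iny$ in $t$ when the subsequence may vary with $t$. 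The paper's route is forced: subsequence convergence (no PDE used) $\Rightarrow$ Serfati identity via \refL{gammaLemma} (this is where the vorticity equation and transport enter) $\Rightarrow$ full-sequence convergence, cutoff-independence, and continuity of $U_\iny$, all read off from \refE{SerfatiId}. It is the Euler equations, not the function-space membership, that upgrade the subsequential limit to a genuine limit. A related technical point: $((1-a)K^j)*(\omega(t)-\omega^0)$ is not an absolutely convergent integral for bounded vorticity ($K$ decays only like $\abs{x}^{-1}$), so one must work throughout with the compactly supported differences $(a_{R_k}-a)K^j$ and pass to the limit only in the twice-integrated-by-parts form, as in \refE{renormEq}.

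For \textit{(iii)}--\textit{(v)} your direct approach --- define $p=-U_\iny'\cdot x - R(u\otimes u)$ and verify $\grad p=-\prt_t u-u\cdot\grad u$ --- runs into exactly the obstruction the paper identifies: $\Delta p=-\dv\dv(u\otimes u)$ determines $p$ only up to a harmonic polynomial, and pinning that polynomial down to be $-U_\iny'(t)\cdot x$ requires knowing the behavior of $\prt_t u+u\cdot\grad u$ at infinity, which is inaccessible for the weak solution (the ``weak value at infinity'' of $\prt_t u$ you invoke has not been given meaning). The paper circumvents this by constructing exact pressures $p_n$ for the smooth approximating solutions $(u_n)$ of \refE{unConvForPressureR2}, whose vorticities are compactly supported so the classical argument applies, proving uniform bounds (\refPThrough{SmoothPressureR2}{gradpBoundR2}), and passing to the limit; uniqueness then transfers the conclusion to every bounded solution. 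Finally, your derivation of \refE{pBoundR2} from \refE{pBMO} via John--Nirenberg and telescoping dyadic averages controls only averages of $p$, not pointwise values, and the paper explicitly notes that $u\otimes u\in L^\iny$ (hence $q\in BMO$) is \emph{not} sufficient for the pointwise logarithmic bound. What is actually used is that $u\in S$ implies $u\otimes u$ is log-Lipschitz (\refL{Morrey}), which fed into the modulus-of-continuity estimate for Riesz transforms (\refLAnd{ThreeRussians}{pDecay}) yields $\abs{q(x)-q(0)}\le C\log(e+\abs{x})$ pointwise.
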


\begin{theorem}\label{T:ExistenceR2}
    Assume that $u^0 \in S$, let $T > 0$ be arbitrary, and fix
    $U_\iny \in (C[0, T])^2$ with $U_\iny(0) = 0$. There exists a
    bounded solution, $u$, to the Euler equations as in \refD{ESol},
    and this solution satisfies (i)-(v) of
    \refT{CharacterizationR2}.
    This solution is unique among all solutions
    with $u(0) = u^0$ that satisfy any one of the following
    uniqueness criteria:
    \begin{itemize}
        \item[(a)]    
            (i) of \refT{CharacterizationR2} holds;

        \item[(b)]
            (ii) of \refT{CharacterizationR2} holds;

        \item[(c)]
            there exists a pressure satisfying
            \refEAnd{VelocityEquationR2}{pRieszRel} for which
            \refE{pBMO} holds;
            
        \item[(d)]
            there exists a pressure satisfying
            \refEAnd{VelocityEquationR2}{pRieszRel} for which 
            $\grad p + U_\iny' \in L^\iny([0, T] \times \R^2)$
            and \refE{pBoundR2} holds.
    \end{itemize}
\end{theorem}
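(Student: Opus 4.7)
The plan is to reduce both existence and uniqueness to the special case $U_\iny \equiv 0$, already treated in \cite{AKLL2014}, by using the change-of-frame transformation \refE{COV} as a bijection between solutions with prescribed $U_\iny$ and solutions in the inertial frame. Write $\eta(t) := \int_0^t U_\iny(s)\,ds$ throughout.

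\textbf{Existence.} Apply the existence half of \cite{AKLL2014} to produce a bounded solution $\ol{u}$ with initial velocity $u^0$ in the inertial frame, together with a pressure $\ol{p}$. Define
\begin{align*}
    u(t,x) := \ol{u}(t, x - \eta(t)) + U_\iny(t), \qquad
    p(t,x) := \ol{p}(t, x - \eta(t)) - U_\iny'(t) \cdot x,
\end{align*}
with $U_\iny'$ interpreted distributionally in time. A direct chain-rule calculation (as sketched in the introduction just before \refE{CharacterizationR2NoUInf}) shows that $(u, p)$ solves the Euler equations distributionally; $\eta(0) = U_\iny(0) = 0$ gives $u(0) = u^0$; and continuity of $\eta$ keeps $u$ in $L^\iny(0,T;S) \cap C([0,T]\times\R^2)$. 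For properties (i)--(v), apply \refT{CharacterizationR2} to the just-constructed $u$ to obtain \emph{some} $\widetilde{U}_\iny$ for which (i)--(v) hold; then evaluating the renormalized Biot-Savart law \refE{RenormalizedBS} against $u$ and using that $\ol{u}$ satisfies the same identity with zero drift forces $\widetilde{U}_\iny = U_\iny$, so the vector field produced by \refT{CharacterizationR2} is indeed the prescribed one.

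\textbf{Uniqueness.} Given a solution $u$ with $u(0)=u^0$ meeting one of (a)--(d) with the prescribed $U_\iny$, invert the transformation:
\begin{align*}
    \ol{u}(t,x) := u(t, x + \eta(t)) - U_\iny(t), \qquad
    \ol{p}(t,x) := p(t, x + \eta(t)) + U_\iny'(t) \cdot x.
\end{align*}
Under (a), the Serfati identity \refE{SerfatiId} for $u$ maps directly to the Serfati identity for $\ol{u}$ with zero drift. Under (b), one first uses the equivalence of the Serfati identity and the renormalized Biot-Savart law --- this equivalence being part of the content of \refT{CharacterizationR2}(i)--(ii), obtained via the splitting $K = aK + (1-a)K$ and time-integrated vorticity transport --- reducing (b) to (a). Under (c) or (d), the pressure transformation sends the Riesz representation \refE{pRieszRel} and the $BMO$/log-growth properties \refEAnd{pBMO}{pBoundR2} for $p$ to the corresponding properties of $\ol{p}$ with the $U_\iny' \cdot x$ correction removed. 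In all four cases we are reduced to a uniqueness statement in the $U_\iny \equiv 0$ frame, and the uniqueness half of \cite{AKLL2014} closes the argument for (a) and (b).

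The main obstacle I anticipate is in cases (c) and (d): the uniqueness in \cite{AKLL2014} is phrased in terms of the Serfati identity rather than pressure growth, so one needs an additional step showing that a solution meeting (c) or (d) must in fact satisfy (a). The cleanest closure is to apply \refT{CharacterizationR2} to such a $u$: parts (iv)--(v) already yield representations of $\grad p$ and of $p$ in terms of \emph{some} $\widetilde{U}_\iny'$; subtracting the prescribed representation in (c) or (d) forces $\widetilde{U}_\iny' = U_\iny'$ as distributions in time, whence $\widetilde{U}_\iny = U_\iny$ using $\widetilde{U}_\iny(0) = U_\iny(0) = 0$, so (i) holds with the prescribed $U_\iny$ and we are back in case (a). This equivalence of the pressure-based and Serfati-identity-based selection criteria is precisely the statement that the constructions of \cite{Serfati1995A, AKLL2014} and \cite{Taniuchi2004, TaniuchiEtAl2010} coincide, which is one of the paper's stated goals.
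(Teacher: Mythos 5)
Your proposal is correct and, for existence and for uniqueness criteria (a) and (b), takes essentially the paper's route: both arguments use the transformation \refE{COV} (the paper's \refL{COVEquiv}) to pass between the prescribed-$U_\iny$ frame and the inertial frame, and then invoke the existence and Serfati-identity uniqueness of \cite{AKLL2014}, with properties (iii)--(v) deferred to the pressure analysis. The genuine divergence is in criteria (c) and (d). The paper handles (c) by citing the uniqueness theorem of \cite{TaniuchiEtAl2010} for $U_\iny \equiv 0$ (a paradifferential argument) and transporting it via \refE{COV}, then observes that (d) implies (c). You instead reduce (c) and (d) to (a): apply \refT{CharacterizationR2} to the given solution to produce some $\widetilde{U}_\iny$ together with a pressure $-\widetilde{U}_\iny'\cdot x - R(u\otimes u)$; since any two pressures for the same $u$ have equal gradients and hence differ only by a function of time, comparison with the prescribed \refE{pRieszRel} forces $\widetilde{U}_\iny' = U_\iny'$, hence $\widetilde{U}_\iny = U_\iny$ (both vanish at $t=0$), landing you in case (a). This is legitimate and non-circular, since parts (iii)--(v) of \refT{CharacterizationR2} are established for arbitrary bounded solutions using only uniqueness under criterion (a); it is more self-contained than the paper's appeal to \cite{TaniuchiEtAl2010}, though it forgoes the paper's secondary payoff of tying these solutions to those of \cite{Taniuchi2004}. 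One step that both you and the paper leave implicit: identifying the $\widetilde{U}_\iny$ of the transformed solution with the prescribed $U_\iny$ requires comparing the two renormalized Biot--Savart laws at points offset by $\int_0^t U_\iny$, and cancelling the resulting $u^0(\ol{x}) - u^0(x)$ terms uses the subsequential renormalized Biot--Savart law for $u^0$ itself from \refP{NearRenormalizedConvergence}.
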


\begin{remark}
%	We will show in \refP{SerfatiForAlla} that if \refE{SerfatiId} holds for one radially
%	symmetric cutoff function then it holds for all radially symmetric cutoff functions.
	Radial symmetry of the cutoff function, $a$, simplifies some of our proofs,
	so we adopt it, but it is not a necessary assumption.
\end{remark}

\refT{CharacterizationR2} shows that if one has a bounded solution to the Euler equations then there must be a $U_\iny$ for which the solution has the stated properties. \refT{ExistenceR2} is a kind of converse, which says that if one has a $U_\iny$ there does, in fact, exist a bounded solution to the Euler equations that satisfies one of the properties stated in \refT{CharacterizationR2}. By the uniqueness in \refT{ExistenceR2} it then follows that the solutions whose existence is ensured by that theorem satisfies all of the properties given in \refT{CharacterizationR2}.

We begin the proof of \refT{CharacterizationR2} in \refS{CharacterizationSectionR2} by establishing properties \ThmProp{i} and \ThmProp{ii}, thereby characterizing the velocity for bounded solutions in the full plane. \refT{ExistenceR2}, giving the existence of solutions along with uniqueness of such solutions that satisfy \refE{SerfatiId}, follows easily from the construction of Serfati solutions in \cite{AKLL2014} and the transformation in \refE{COV}: this is explained in detail in \refS{ExistenceR2}. It follows from this uniqueness, then, that any further properties we can establish for the Serfati solutions constructed in \cite{AKLL2014}, modified by \refE{COV}, must hold for our bounded solutions. In \refS{PressureR2} we establish some such properties; namely, those of the pressure appearing in \ThmProp{iii}-\ThmProp{v} of \refT{CharacterizationR2}.

The formula for the pressure gradient in the full plane is the same as that in \cite{Serfati1995B}, and is based on the Green's function for the Laplacian. The most delicate estimates, those characterizing the behavior of the pressure itself at infinity, we obtain using Riesz transforms in the full plane. These estimates appear in \refS{PoissonFullPlane}.

%
% Section
%
\section{Background Material}\label{S:Background}

\noindent In this section we present definitions and bounds that we will need in the remainder of this paper.

We have the following estimates on $K$ of \refE{K}:

\begin{prop}\label{P:KStarBounds}
	We have,
	\begin{align}\label{e:JJBound}
		\abs{K(x - y)}
			&\le \frac{C}{\abs{x - y}}.
	\end{align}
	Let $a$ be a radial cutoff function.
	There exists $C>0$ such that for all $\eps > 0$,
	\begin{align}
%		\norm{a_\eps(x - y) K(x - y)}_{L^1_y(\R^2)}
%			&\le C (\eps + \eps^2),
%								\label{e:aKKBound} \\
		\norm{\grad_y a_\eps(x - y) \otimes
		    \grad_y K^i(x - y)}_{L^1_y(\R^2)}
			&\le C \eps^{-1},
								\label{e:D1JJBound} \\
		\norm{\grad_y \grad_y \brac{(1 - a_\eps(x - y))
		    K(x - y)}}_{L^1_y(\R^2)}
			&\le C \eps^{-1}.
								\label{e:D2KKBound}
	\end{align}
	Let $U \subseteq \R^2$ have measure $2 \pi R^2$ for some
	$R < \iny$.
	Then for any $p$ in $[1, 2)$,
	\begin{align}\label{e:RearrangementBound}
		\smallnorm{K(x - \cdot)}_{L^p(U)}^p
			&\le \frac{R^{2 - p}}{2 - p}.
	\end{align}
\end{prop}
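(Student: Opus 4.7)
The plan is to prove each of the four estimates separately, leaning on the explicit formula $K(x) = (2\pi)^{-1} x^\perp/\abs{x}^2$ and the scaling $a_\eps(\cdot) = a(\cdot/\eps)$ throughout. Estimate \refE{JJBound} is immediate from this formula.

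For \refE{D1JJBound}, I would observe that $\grad_y a_\eps(x - y) = -\eps^{-1} (\grad a)((x - y)/\eps)$ is supported in an annulus $A_\eps := \{y : c_1 \eps \le \abs{x - y} \le c_2 \eps\}$; here $c_1 > 0$ because $a \equiv 1$ in a neighborhood of the origin, and $c_2 < \iny$ because $a$ has compact support. On $A_\eps$ one has $\abs{\grad_y a_\eps(x - y)} \le C\eps^{-1}$, $\abs{\grad K(x - y)} \le C\abs{x - y}^{-2} \le C\eps^{-2}$, and $\abs{A_\eps} \le C\eps^2$, so the $L^1_y$ norm is bounded by $C\eps^{-1} \cdot \eps^{-2} \cdot \eps^2 = C\eps^{-1}$, with $C$ depending only on $a$.

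For \refE{D2KKBound}, I would apply the Leibniz rule to write, schematically,
\begin{align*}
    \grad_y \grad_y \brac{(1 - a_\eps) K}
        = (1 - a_\eps)\, \grad \grad K
            - \grad a_\eps \otimes \grad K
            - \grad K \otimes \grad a_\eps
            - K \otimes \grad \grad a_\eps,
\end{align*}
with all factors evaluated at $x - y$. The first term is supported on $\{\abs{x - y} \ge c_1 \eps\}$ since $1 - a_\eps \equiv 0$ for $\abs{x - y} < c_1\eps$; using $\abs{\grad \grad K(z)} \le C\abs{z}^{-3}$ and polar coordinates gives an $L^1$ bound of order $\int_{c_1 \eps}^\iny r \cdot r^{-3}\, dr = O(\eps^{-1})$. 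The two cross terms are handled exactly as in the argument for \refE{D1JJBound}. For the final term, $\abs{\grad \grad a_\eps} \le C\eps^{-2}$ on $A_\eps$ and $\abs{K} \le C\eps^{-1}$ there, producing $\eps^{-2} \cdot \eps^{-1} \cdot \abs{A_\eps} = O(\eps^{-1})$. Summing the four contributions yields the claimed bound.

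For \refE{RearrangementBound}, I would note that $\abs{K(x - y)}^p = (2\pi)^{-p}\abs{x - y}^{-p}$ is a radially symmetric decreasing function of $y - x$. By the Hardy--Littlewood rearrangement inequality, $\smallnorm{K(x - \cdot)}_{L^p(U)}^p$ is bounded by the integral over the ball centered at $x$ with the same measure as $U$; an explicit computation in polar coordinates on that ball then yields the stated bound. The main work overall is purely bookkeeping in the Leibniz expansion for \refE{D2KKBound}; none of the steps is subtle — the essential point is that $a \equiv 1$ near the origin, which forces the inner boundary of the annulus $A_\eps$ to scale like $\eps$ and produces the correct power $\eps^{-1}$.
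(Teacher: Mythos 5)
Your argument is correct, and it actually supplies more than the paper does: the paper proves only \refE{JJBound} (as ``immediate from \refE{K}'') and refers to \cite{AKLL2014} for \refE{D1JJBound}--\refE{RearrangementBound}. The ingredients you use --- the annular support $c_1\eps \le \abs{x-y} \le c_2\eps$ of $\grad a_\eps(x-\cdot)$ forced by $a \equiv 1$ near the origin, the homogeneity bounds $\abs{D^\beta K(z)} \le C\abs{z}^{-1-\abs{\beta}}$, the Leibniz expansion of $\grad\grad[(1-a_\eps)K]$ (legitimate classically, since $(1-a_\eps)K$ vanishes identically near the singularity and is smooth on all of $\R^2$), and symmetric rearrangement for \refE{RearrangementBound} --- are exactly the standard ones, and each term comes out $O(\eps^{-1})$ as you say; this is the same elementary computation carried out in the cited reference (compare also \refL{OrderaRK}, which the paper does prove by precisely your annulus argument). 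One small caveat on the last estimate: with $\abs{U} = 2\pi R^2$ the ball of equal measure centered at $x$ has radius $\sqrt{2}\,R$, and the polar-coordinate integral evaluates to $(2\pi)^{1-p}\,2^{1-p/2}\,R^{2-p}/(2-p)$, whose prefactor equals $\sqrt{2}$ at $p=1$; so the computation yields \refE{RearrangementBound} only up to a harmless absolute constant for $p$ near $1$, not with constant exactly $1$ as literally stated. Since the bound is only ever used with an unspecified constant (e.g.\ in \refP{gradpBoundR2}), this discrepancy is in the statement rather than in your method, but you should not claim the explicit computation ``yields the stated bound'' without noting the extra factor.
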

\begin{proof}
    The bound in \refE{JJBound} is immediate from \refE{K}.
    For the bounds in \refEThrough{D1JJBound}{RearrangementBound}
    see \cite{AKLL2014}.
\end{proof}

\begin{definition}\label{D:MOC}
    	A nondecreasing continuous function,
    	$\mu \colon [0, \iny) \to [0, \iny)$,
    	is a modulus of continuity (MOC)
    	if $\mu(0) = 0$ and $\mu > 0$ on $(0, \iny)$. 
\end{definition}

\refD{Cmu} is a generalization of \Holder-continuous functions.

\begin{definition}\label{D:Cmu}
	Let $\mu$ be a MOC. Define
	\begin{align*}
	    C_\mu
	        = C_\mu(\R^2)
			= \setMultiLine{f \in C(\R^2) \colon &\exists \, c_0 > 0
			    \textit{ s.t. } \forall \, x, y \in \R^2, \\
			    &\abs{f(x) - f(y)} \le c_0 \mu(\abs{x - y})}
	\end{align*}
	with
	\begin{align*}
		\norm{f}_{C_\mu}
			= \norm{f}_{L^\iny}
				+ \norm{f}_{\dot{C}_\mu},
	\end{align*}
	where
	\begin{align*}
		\norm{f}_{\dot{C}_\mu}
			= \sup_{x \ne y} \frac{\abs{f(x) - f(y)}}{\mu(\abs{x - y})}.
	\end{align*}
\end{definition}

We define Log-Lipschitz functions explicitly by using the MOC,
\begin{align}\label{e:muLL}
	\mu_{LL}(r)
		= \left\{
			\begin{array}{rl}
				-r \log r, & \text{if } r \le e^{-1}, \\
				e^{-1}, & \text{if } r > e^{-1},
			\end{array}
		\right.
\end{align}
setting $LL = C_{\mu_{LL}}$ and $\dot{LL} = \dot{C}_{\mu_{LL}}$.

\begin{definition}\label{D:Dini}
Given a MOC, $\mu$, we define,
\begin{align*}
    S_\mu(x)
        = \int_0^x  \frac{\mu(r)}{r} \, dr.
\end{align*}
We say that $\mu$ is \textit{Dini} if $S_\mu$ is finite for some (and hence all) $x > 0$. (Note that when $\mu$ is Dini, $S_\mu$ is itself a MOC.) A function is \textit{Dini-continuous} if it has a Dini MOC.
\end{definition}

% Dini extends the notion of a \Holder space in the sense that if, for some $\beta \in (0, 1)$, a function is $C^\beta$ then it is Dini-continuous, but there are Dini-continuous functions that belong to no $C^\beta$ spaces for any $\beta \in (0, 1)$.

%
% Section
%
\section{Characterization of velocity in the full plane}\label{S:CharacterizationSectionR2}

\noindent In this section we characterize the velocity, $u$, for solutions to the 2D Euler equations in the full plane, proving $\refE{CharacterizationR2}_1$, stated more precisely in \textit{(i), (ii)} of \refT{CharacterizationR2}. In outline, our proof proceeds as follows:

\begin{enumerate}

\item
In \refS{SerfatiIdR2} we show that if the Serfati identity, which we can write more concisely as
\begin{align}\label{e:SerfatiIdConcise}
    \begin{split}
        u(t&) - u^0
            = U_\iny(t) + (a K) *(\omega(t) - \omega^0) \\
        &\qquad- \int_0^t \pr{\grad \grad^\perp \brac{(1 - a) K}}
        \stardot (u \otimes u)(s) \, ds,
    \end{split}
\end{align}
holds then the renormalized Biot-Savart law for $u(t) - u(0)$,
\begin{align}\label{e:VelocityExpression}
		\displaystyle u(t) - u(0) = U_\iny(t)
		   + \lim_{R \to \iny} (a_R K) * (\omega(t) - \omega(0)),
\end{align}
holds without taking a subsequence.

\item
We also show in \refS{SerfatiIdR2} that, conversely, if \refE{VelocityExpression} holds for a subsequence then \refE{SerfatiIdConcise} holds. The subsequence can vary with time.

\item
We prove in \refS{BSLawR2} that for all $v \in S$ the renormalized Biot-Savart law, \refE{RenBSLaw}, holds for a subsequence; that is, we have $v = H + \lim_{k \to \iny} (a_{R_k} K) * \omega(v)$ for some subsequence, $(R_k)$, and constant vector field, $H$.
		
\item Let $v = u(t) - u^0$. Then \refE{VelocityExpression} holds for some subsequence possibly varying over time and some $U_\iny \colon [0, T] \to \R^2$ by step 3, so \refE{SerfatiIdConcise} holds by step 2, so \refE{VelocityExpression} holds for the full sequence by Step 1.

\item
Finally, since \refE{SerfatiIdConcise} holds and $u \in C([0, T]; L^\iny)$, we have $U_\iny \in C([0, T])^2$. We make this argument in \refS{CharacterizationR2}.
 
\end{enumerate}

%
% Section
%
\subsection{The Serfati identity in the full plane}\label{S:SerfatiIdR2}

\noindent In this subsection we prove \refP{RenormalizationLemma}, giving the equivalence between the renormalized Biot-Savart law and the Serfati identity. Formally, this equivalence follows from several integrations  by parts, but we must take some care to do these integrations in the face of the fairly minimal time regularity of the vorticity for our weak solutions. (The convolutions in space will all be of a compactly supported distribution with a tempered distribution, and so represent no difficulties.)

\begin{prop}\label{P:RenormalizationLemma}
	Suppose that $u$ is a solution to the Euler equations in the full plane
	as in \refD{ESol}.  Then if $u$ satisfies
	\refE{SerfatiId} for some $U_\iny$
	then \refE{RenormalizedBS} holds, the convergence being uniform on compact subsets
	of $[0, T] \times \R^2$.
	Conversely, if \refE{RenormalizedBS} holds for a subsequence
	for some $U_\iny$,
	the convergence being pointwise for any fixed $t \in [0, T]$,
	then $u$ satisfies
	\refE{SerfatiId}.
	The subsequence is allowed to vary with $t \in [0, T]$.
\end{prop}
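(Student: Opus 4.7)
The plan is to reduce both directions of the equivalence to a single master identity: for $f \in C^\infty_c(\R^2)$,
\begin{align*}
    f * (\omega(t) - \omega^0) = -\int_0^t (\nabla \nabla^\perp f) \stardot (u \otimes u)(s)\, ds.
\end{align*}
Formally this follows from the Euler equation by taking the curl (killing the pressure gradient) to get $\partial_s \omega = -\nabla^\perp \cdot \dv(u \otimes u)$, then convolving with $f$ and moving the two derivatives onto $f$ by integration by parts. Equivalently, starting from the weak vorticity equation $\partial_t \omega + \dv(u\omega) = 0$ of \refD{ESol}, one uses the distributional identity $\dv(u\omega) = \nabla^\perp \cdot \dv(u \otimes u)$, whose pointwise version $\partial_i(u^i\omega) - \partial^\perp_k \partial_i(u^i u^k) = -\omega\, \dv u$ equals $0$ for divergence-free $u$ and survives passage to the limit under mollification.

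Next I would apply this identity with $f := (a_R - a) K^j$. For $R$ large enough that $a_R \equiv 1$ on $\supp(1 - a)$, the cutoff difference $a_R - a$ vanishes in a neighborhood of the origin, so $(a_R - a) K^j$ genuinely lies in $C^\infty_c(\R^2)$. Writing $a_R - a = (1 - a) - (1 - a_R)$ and adding $(aK^j) *(\omega(t) - \omega^0)$ to both sides then yields
\begin{align*}
    (a_R K^j) *(\omega(t) - \omega^0) & = (aK^j) *(\omega(t) - \omega^0) \\
        & \quad - \int_0^t \nabla \nabla^\perp[(1-a) K^j] \stardot (u \otimes u)(s)\, ds + \Cal{E}_R(t, x),
\end{align*}
where $\Cal{E}_R(t, x) := \int_0^t \nabla \nabla^\perp[(1 - a_R) K^j] \stardot (u \otimes u)(s)\, ds$. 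The bound \refE{D2KKBound} of \refP{KStarBounds} applied with $\eps = R$ gives $\abs{\Cal{E}_R(t, x)} \le C T R^{-1} \smallnorm{u}_{L^\iny}^2$ uniformly in $x$, so $\Cal{E}_R \to 0$ uniformly on $[0, T] \times \R^2$ as $R \to \iny$.

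Both implications now fall out. For the forward direction, if \refE{SerfatiId} holds then the first two terms on the right of the display sum to $u^j(t) - (u^0)^j - U_\iny^j(t)$, and letting $R \to \iny$ gives \refE{RenormalizedBS} with convergence uniform on compact subsets (in fact uniform on $[0, T] \times \R^2$). Conversely, if \refE{RenormalizedBS} holds along some (possibly $t$-dependent) subsequence $(R_k)$, the left-hand side of the display converges along $(R_k)$, and since the right-hand side converges along the full sequence $R \to \iny$, the two limits must agree, forcing \refE{SerfatiId}. The $t$-dependence of $(R_k)$ is harmless precisely because the right-hand-side limit does not depend on subsequences.

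The hard part will be justifying the master identity itself under the minimal time regularity that \refD{ESol} provides for $\omega$. The spatial integrations by parts are harmless, as noted at the start of the subsection, since they pair a compactly supported distribution against a tempered one. The time integration turning $\int_0^t \partial_s \omega\, ds$ into $\omega(t) - \omega^0$ is the delicate step; I would handle it by pairing the weak vorticity equation against $\chi_\delta(s) f(y)$ with $\chi_\delta$ a smooth approximation of $\CharFunc_{[0, t]}$ and letting $\delta \to 0$, using $u \in C([0, T] \times \R^2)$ (hence $\omega \in C([0, T]; \Cal{D}'(\R^2))$ via $\omega = \curl u$) to make sense of the endpoint values $\omega(0)$ and $\omega(t)$.
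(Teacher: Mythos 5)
Your proof is correct and follows essentially the same route as the paper: your master identity is precisely \refL{gammaLemma} (which the paper proves for compactly supported $H^2$ functions via the same time-cutoff argument you sketch at the end), and applying it to $f = (a_R - a)K^j$ with the remainder controlled by \refE{D2KKBound} is exactly how the paper passes between \refE{SerfatiId} and \refE{RenormalizedBS} in both directions. The only cosmetic difference is that the paper routes the forward implication through \refP{SerfatiForAlla} (the Serfati identity holds for every cutoff) before letting $R \to \infty$, whereas you fold that step into the single display.
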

\begin{proof}
	Assume that \refE{SerfatiId} holds.
	Because the vorticity
	is transported by the flow map and the velocity is continuous in time and space,
	both integrals in \refE{SerfatiId} are continuous as functions of $t$ and $x$. Therefore,
	it must be that $U_\iny \in C([0, T])$.
	
	By \refP{SerfatiForAlla}, \refE{SerfatiId} holds for $a_R$ in place
	of $a$ for all $R > 0$. Taking the limit as $R \to \iny$ and 
	applying \refE{D2KKBound} gives \refE{RenormalizedBS},
	the convergence being uniform on compact subsets
	of $[0, T] \times \R^2$.
	
	Now assume that \refE{RenormalizedBS} holds for a subsequence,
	$(R_k)$, with
	the convergence being pointwise for any fixed $t \in [0, T]$.
	Because $t$ is fixed in the argument that follows, it does not matter
	whether the subsequence varies with time.
	Fixing $x$ in $\R^2$ and letting
	$h(y) = (a_{R_k} - a)(x - y) K^j(x - y)$, $j = 1$ or $2$,
	\refL{gammaLemma} gives
	\begin{align}\label{e:renormEq}
		\begin{split}
			((a_{R_k} - a) K^j&) * (\omega(t) - \omega^0) \\
				&= \int_0^t \grad \grad^\perp \brac{(a_{R_k} - a) K^j} \stardot
					(u \otimes u)(s) \, ds.
		\end{split}
	\end{align}
	Because of \refE{RenormalizedBS}, as $k \to \iny$,
	the left hand side of \refE{renormEq} converges to
	\begin{align*}
		u^j(t, x) - (u^0)^j(x) - U_\iny(t)
			- (a K^j&) * (\omega(t) - \omega^0).
	\end{align*}
	
	The right-hand side of \refE{renormEq} can be written,
	\begin{align*}
		\int_0^t \grad &\grad^\perp \brac{(1 - a) K^j} \stardot
					(u \otimes u)(s) \, ds \\
			&-
		\int_0^t \grad \grad^\perp \brac{(1 - a_{R_k}) K^j} \stardot
					(u \otimes u)(s) \, ds.
	\end{align*}
	% Because the integrand on the right-hand side of \refE{renormEq} is compactly supported
	% and bounded and the first integral above is finite by \refE{D2KKBound} it was valid to split
	% the integral on the right-hand side of \refE{renormEq} this way.
	Applying \refE{D2KKBound} with Young's convolution inequality
	to the second term above we see that it vanishes as $R_{k} \to \iny$
	(here, we need only that $u \in L^\iny([0, T] \times \R^2$).
	Taking the limit as $k \to \iny$, then, it follows that \refE{SerfatiId} holds and hence also,
	as observed above, $U_\iny \in C([0, T])$.
\end{proof}

\begin{remark}\label{R:RenormalizationLemma}
	It follows from \refP{RenormalizationLemma} that if \refE{RenormalizedBS} holds for a subsequence,
	the convergence being pointwise for any fixed $t \in [0, T]$, then the convergence actually holds
	for the full sequence and is
	uniform on compact subsets of $[0, T] \times \R^2$.
\end{remark}

The key to the proof of \refP{RenormalizationLemma} was showing that if the Serfati identity holds for one cutoff function it holds for all cutoff functions: this is the purpose of \refP{SerfatiForAlla}, which rests on \refL{gammaLemma}, a technical lemma that handles integrating by parts in the face of the low time regularity of bounded solutions.

\begin{prop}\label{P:SerfatiForAlla}
	Suppose that $u$ is a solution to the Euler equations in the full plane
	as in \refD{ESol} and that \refE{SerfatiId} holds
	for one, given cutoff function, $a$.
	Then \refE{SerfatiId} holds for any other cutoff function, $b$.
\end{prop}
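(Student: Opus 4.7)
The plan is to reduce the proposition to a single identity that is perfectly suited to \refL{gammaLemma}, in the same spirit as the converse direction of \refP{RenormalizationLemma}. Write down the Serfati identity \refE{SerfatiId} assumed to hold for $a$ and the corresponding identity with $b$ in place of $a$ that we wish to establish, and subtract. The contributions $u^j(t) - (u^0)^j$ and $U_\iny^j(t)$ cancel, so the proposition reduces, for $j = 1, 2$ and each $(t, x) \in [0, T] \times \R^2$, to a single identity of the form
\begin{align*}
    ((a - b) K^j) * (\omega(t) - \omega^0)(x)
      = \int_0^t \grad \grad^\perp \brac{(a - b) K^j} \stardot (u \otimes u)(s)(x) \, ds
\end{align*}
(with sign dictated by the conventions in \refE{SerfatiId}), which is structurally the same as equation \refE{renormEq} in the proof of \refP{RenormalizationLemma}.

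The crucial observation is that the kernel $g := (a - b) K^j$ belongs to $C^\infty_c(\R^2)$. Since both $a$ and $b$ are radial cutoff functions equal to $1$ on a neighborhood of the origin, their difference $a - b$ vanishes on that neighborhood, cancelling the singularity of $K$; and since $a$ and $b$ are both compactly supported, so is $g$. Consequently $h(y) := g(x - y) = (a - b)(x - y) K^j(x - y)$ is a valid test kernel for \refL{gammaLemma}. This is precisely the form of $h$ used in the proof of \refP{RenormalizationLemma} (with $a_{R_k}$ there replaced by the fixed cutoff $b$ here). Invoking \refL{gammaLemma} delivers the displayed identity, and combining it with the assumed Serfati identity for $a$ produces the Serfati identity for $b$, completing the proof.

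The main obstacle is the low time regularity of the vorticity for a bounded solution: $u$ is only log-Lipschitz in space (see \refR{FlowMap}) and continuous in time, so $\omega$ is transported by the flow but need not possess any classical time derivative, and formal differentiation in $t$ followed by integration by parts in $s$ is not directly justified. A heuristic derivation of the required identity would differentiate $g * \omega(t)$ in $t$, substitute the weak transport equation $\prt_t \omega = -\dv(u \omega)$, and integrate by parts twice in space to shift derivatives onto $g$, producing the form $\grad \grad^\perp g \stardot (u \otimes u)$. This manipulation is exactly what \refL{gammaLemma} encodes rigorously. Once the smoothness and compact support of $g$ are verified, no further analysis is needed; the proposition is essentially a corollary of \refL{gammaLemma} applied to $h(y) = (a - b)(x - y) K^j(x - y)$.
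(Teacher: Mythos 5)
Your proposal is correct and follows essentially the same route as the paper: subtract the two Serfati identities, observe that $(a-b)K^j$ is a compactly supported kernel that is regular at the origin (the paper notes it lies in $H^2$ with compact support, which is what \refL{gammaLemma} requires; your $C^\infty_c$ observation is a stronger version of the same fact), and apply \refL{gammaLemma} to see the difference vanishes. The only blemish is the undetermined sign in your displayed identity, which works out to the minus sign supplied directly by \refE{gammaId}.
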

\begin{proof}
	Let $R_a(t, x)$ be the right-hand side of \refE{SerfatiId} for the cutoff
	function, $a$, and note that it is always finite for any $u$ in $L^\iny(0, T; S)$.
	Letting $h(y) = (a(y) - b(y)) K^j(y)$, $j = 1$ or $2$, $h$ lies in
	$H^2(\R^2)$ and has compact support, so by \refL{gammaLemma},
	\begin{align*}
		R_b(t, &x) - R_a(t, x) \\
			&= - h * (\omega(t) - \omega^0)(x)
				- \int_0^t (\grad \grad^\perp h) \stardot (u \otimes u)(s, x) \, ds
			= 0.	
	\end{align*}
\end{proof}

\begin{lemma}\label{L:gammaLemma}
	Let $h \in H^2(\R^2)$ have compact support.
	% Let $U$ be a bounded open subset of $\R^2$ and assume that
	% $h \in H^1(\R^2)$ with support contained in $\ol{U}$
	% and with $h|_U \in H_0^1(U)$.
	Assume that $u$ is a bounded solution to the Euler equations as in \refD{ESol}.
	Then
	\begin{align}\label{e:gammaId}
		h * (\omega(t) - \omega^0)
			= - \int_0^t (\grad \grad^\perp h) \stardot (u \otimes u)(s) \, ds.
	\end{align}
\end{lemma}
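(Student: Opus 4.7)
I would derive the identity by testing the vorticity equation in conservation form, $\prt_t \omega + \dv(u\omega) = 0$, against a space-time test function approximating $\CharFunc_{(0,t)}(s)\, h(x - y)$ for fixed $x \in \R^2$. The conservation form is equivalent to the transport form of \refD{ESol} because $\dv u = 0$ and $u\omega \in L^\iny$. Taking $\chi_\eta \in C_c^\iny(0, T)$ with $\chi_\eta \to \CharFunc_{(0,t)}$ and a spatial mollification $h_\delta = h * \rho_\delta \in C_c^\iny(\R^2)$, the distributional vorticity equation applied to $\varphi(s, y) = \chi_\eta(s) h_\delta(x - y)$ gives
\begin{align*}
    \int_0^T \chi_\eta'(s)\, (h_\delta * \omega(s))(x) \, ds
        - \int_0^T \chi_\eta(s) \int_{\R^2} \grad h_\delta(x - y) \cdot (u\omega)(s, y) \, dy \, ds = 0.
\end{align*}
Since transport by the continuous flow of \refR{FlowMap} makes $s \mapsto \omega(s)$ continuous in $L^1_{\mathrm{loc}}$, the first term converges as $\eta \to 0$ to $(h_\delta * \omega^0)(x) - (h_\delta * \omega(t))(x)$, while the second converges to $\int_0^t \int_{\R^2} \grad h_\delta(x - y) \cdot (u\omega)(s, y) \, dy \, ds$.

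Next I would rewrite the inner spatial integral via the algebraic identity $u\omega = \grad^\perp(\tfrac{1}{2}|u|^2) - [\dv(u \otimes u)]^\perp$, which holds pointwise a.e.\ for any divergence-free $u$ (it is equivalent to the familiar $u \cdot \grad u = \grad(\tfrac{1}{2}|u|^2) + \omega u^\perp$). The contribution of $\grad^\perp(\tfrac{1}{2}|u|^2)$ vanishes, since $\int \grad h_\delta(x - y) \cdot \grad^\perp f(y) \, dy = -\int (\curl \grad h_\delta)(x - y)\, f(y) \, dy = 0$ for any $f \in L^\iny$. For the remaining piece, integrating by parts twice shifts both derivatives onto $h_\delta$, and a brief bookkeeping with the antisymmetric symbol yields
\begin{align*}
    \int_{\R^2} \grad h_\delta(x - y) \cdot [-\dv(u \otimes u)]^\perp(y) \, dy
        = \bigl(\grad\grad^\perp h_\delta \stardot (u \otimes u)\bigr)(x).
\end{align*}
Combining the pieces proves the identity \refE{gammaId} with $h_\delta$ in place of $h$.

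To finish, I would send $\delta \to 0$. The $h_\delta$ have uniformly compact support, $h_\delta \to h$ in $H^2(\R^2)$, and, by Sobolev embedding in $\R^2$, also uniformly, so $\norm{h_\delta - h}_{L^1} \to 0$. The left-hand side therefore converges against $\omega \in L^\iny$, and the right-hand side converges because $\grad\grad^\perp h_\delta \to \grad\grad^\perp h$ in $L^2$ with uniform compact support, paired against $u \otimes u \in L^\iny$. The main obstacle is justifying the two spatial integrations by parts given only $H^2$ regularity of $h$ and the merely distributional time regularity of $\omega$; this is exactly what the two approximations, in space by $h_\delta$ and in time by $\chi_\eta$, are designed to overcome.
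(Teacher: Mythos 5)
Your argument is correct, and its architecture coincides with the paper's: both proofs test the weak vorticity equation against a tensor-product test function $\chi(s)\,h(x-\cdot)$, recover the boundary terms $h*(\omega(t)-\omega^0)$ from the continuity of $s\mapsto (h*\omega(s))(x)$ supplied by transport along the flow, and convert the nonlinear term into $\grad\grad^\perp h \stardot (u\otimes u)$ by moving two derivatives onto $h$. The implementation differs in two places. For the nonlinearity, the paper writes $\prt_t\omega=\dv((u\cdot\grad u)^\perp)$, pairs it with $h_\eps(x-\cdot)$ in the $H^{-1}$--$H^1_0$ duality of \refL{Hm1H01}, and then applies the product-rule identity $(u\cdot\grad u)\cdot V = u\cdot\grad(V\cdot u)-(u\cdot\grad V)\cdot u$ with $V=\grad^\perp h_\eps(x-\cdot)$, the first term dying by $\dv u=0$; you instead use the Bernoulli-type decomposition $u\omega=\grad^\perp(\tfrac12\abs{u}^2)-[\dv(u\otimes u)]^\perp$ and kill the gradient part via $\curl\grad=0$. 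These are algebraically equivalent routes to the same cancellation. For regularity, the paper keeps $h\in H^2$ throughout and justifies the second integration by parts by noting $\grad(V\cdot u)\in H^1$ with compact support, whereas you mollify $h$ to $h_\delta$ and pass to the limit $\delta\to 0$ at the end; your limit passage is routine ($h_\delta\to h$ in $H^2$ with uniformly compact supports, paired against $L^\iny$ data), and it makes both spatial integrations by parts elementary at the cost of one extra approximation parameter.
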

\begin{proof}
	Note that the compact support of $h$ gives the finiteness of both convolutions in \refE{gammaId}
	(see \refL{Conv}).	
	Define, for all $\eps$ in $(0, 1/2)$,
	\begin{align*}
		h_\eps(s, x)
			= \phi_\eps(s) h(x),
	\end{align*}
	where $\phi_\eps$ lies in $C^\iny_C((0, t))$ and is chosen so that,
	% $\phi_\eps(0) = \phi_\eps(t) = 0$,
	$\phi_\eps = 1$ on $[\eps, t - \eps]$,
	$\phi_\eps \ge 0$, and
	\begin{align*}
		\phi'_\eps(\cdot)
			\to \delta(\cdot) - \delta(t - \cdot)
			% \Cal{D}'((0, t))
			\text{ as } \eps \to 0^+,
	\end{align*}
	the convergence being as Radon measures on $[0, T]$.
	We note, then, that $h_\eps$ lies in $H_0^1((0, t) \times \R^2)$ with compact support
	in $(0, t) \times \R^2$.
	
	Fix $x$ in $\R^2$ and let $B$ be an open ball in $(0, t) \times \R^2$ sufficiently large
	to contain $\supp h_\eps(x - \cdot)$.
	% \subseteq (0, t) \times \supp h(x - \cdot)$.

	Now, $\grad u \in L^\iny(0, T; L^2_{loc}(\R^2))$ since $\omega \in L^\iny([0, T] \times \R^2)$,
	so $u \cdot \grad u \in L^\iny(0, T; L^2_{loc}(\R^2))$. Thus, $\prt_t \omega
	= -u \cdot \grad \omega = -\curl (u \cdot \grad u) = \dv((u \cdot \grad u)^\perp)$
	lies in $L^\iny(0, T; H_{loc}^{-1}(\R^2))$ and hence in
	$L^\iny(0, T; H^{-1}(B))$.
	Therefore, we have sufficient regularity to apply \refL{Hm1H01} to obtain,
	\begin{align*}
		(\prt_t \omega, \, &h_\eps(x - \cdot))_{H^{-1}(B), H_0^1(B)}
			= (\dv((u \cdot \grad u)^\perp), h_\eps(x - \cdot))_{H^{-1}(B), H_0^1(B)} \\
			&= - \int_0^t \int_{\R^2} (u \cdot \grad u)^\perp(t, y)
				\cdot \grad h_\eps(x - y)) \, dy \, ds \\
			&= \int_0^t \int_{\R^2} (u \cdot \grad u)(t, y)
				\cdot \grad^\perp h_\eps(x - y) \, dy \, ds.
	\end{align*}
		
	Using the vector identity, $(u \cdot \grad u) \cdot V
	    = u \cdot \grad (V \cdot u) - (u \cdot \grad V)
	    \cdot u$
    with $V = \grad^\perp h_\eps(x - \cdot)$ gives
	\begin{align}\label{e:SerfatiIDIBP2}
		\begin{split}
		    \int_{\R^2} (u &\cdot \grad u)(t, y)
				    \cdot \grad^\perp h_\eps(x - y) \, dy
			    =
			    \int_{\R^2} (u \cdot \grad u) \cdot V \\
			&= \int_{\R^2} u \cdot \grad (V \cdot u)
			    - \int_{\R^2} (u \cdot \grad V) \cdot u
			=  - \int_{\R^2} (u \cdot \grad V) \cdot u.
		\end{split}
	\end{align}
	\Ignore{ % Older way which uses more than needed
	Observing that $u$ lies in the space $E(B)$ of
	\refL{TemamIBP} and
	$V \cdot u$
	lies in $H^1(B)$, we have sufficient regularity to apply \refL{TemamIBP}
	along with the vector identity, $u \cdot \grad (V \cdot u) = (u \cdot \grad V) \cdot u
	+ (u \cdot \grad u) \cdot V$, to show that
	The one integral vanished in applying \refL{TemamIBP} since $\dv u = 0$ in $\R^2$
	and $u \cdot \n = 0$ on $\prt B$.
	} % End Ignore
	The one integral vanished because
	$\dv u = 0$ and $\grad (V \cdot u) \in H^1(\R^2)$
	with compact support.
	We conclude from this that
	\begin{align}\label{e:prtomegagameeps1}
		\begin{split}
			(\prt_t \omega, \, &h_\eps(x - \cdot))_
					{H^{-1}(B), H_0^1(B)} \\
				&= - \int_0^t \int_{\R^2} \pr{(u(s, y) \cdot \grad_y) \grad_y^\perp
					h_\eps(x - y)} \cdot u(s, y) \, dy \, ds \\
				&= - \int_0^t \int_{\R^2} (\grad_y \grad_y^\perp h_\eps(x - y))
						\cdot (u \otimes u) (s, y) \, dy \, ds \\
				&\to - \int_0^t (\grad \grad^\perp h) \stardot (u \otimes u)(s, x)
							\, ds 
		\end{split}
	\end{align}
	as $\eps \to 0^+$
	by the dominated convergence theorem.
	
	With $x$ still fixed, let
	\begin{align*}
		f(s)
			= \int_{\R^2} h(x - y) \omega(s, y) \, dy.
	\end{align*}
	Vorticity is transported by the flow map (as in \refD{ESol})
	and $u$ is bounded on $[0, t] \times \R^2$, so
	$f$ is continuous on $[0, t]$. Thus,
	\begin{align}\label{e:prtomegagameeps2}
		\begin{split}
		(\prt_t \omega, \, &h_\eps(\cdot, x - \cdot))
		        _{H^{-1}(B), H_0^1(B)} \\
			&= - (\omega, \prt_t h_\eps(\cdot, x - \cdot))
	                _{H^{-1}(B), H_0^1(B)} \\
			&= - (\omega, \phi_\eps' h(x - \cdot))_{L^2(B)}, \\
			&= - \int_0^t \int_{\R^2} \phi_\eps'(s) h(x - y)
			    \omega(s, y) \, dy \, ds \\
			&= - \int_0^t \phi_\eps'(s) \int_{\R^2} h(x - y)
			    \omega(s, y) \, dy \, ds \\
			&= - \int_0^t \phi_\eps'(s) f(s) ds \to f(t) - f(0)
			    \text{ as } \eps \to 0^+ \\
			&= \int_{\R^2} h(x - y)
			    (\omega(t, y) - \omega^0(y)) \, dy.
		\end{split}
	\end{align}
	
	The identity in \refE{gammaId} follows from \refEAnd{prtomegagameeps1}{prtomegagameeps2}.
\end{proof}

%
% Section
%
\subsection{Renormalized Biot-Savart law in the full plane}\label{S:BSLawR2}

\noindent The purpose of this subsection is to prove that for any vector field in $S$, the renormalized Biot-Savart law holds for a subsequence; this is \refP{NearRenormalizedConvergence}.

\begin{prop}\label{P:NearRenormalizedConvergence}
	Assume that $u$ lies in the Serfati space, $S$, of \refD{SerfatiVelocity}.
	Let $\omega = \omega(u)$ and define
	\begin{align*}
		u_R = (a_R K) * \omega.
	\end{align*}
	Then $\omega(u_R) \to \omega(u)$ in $L^\iny$ with
	$\norm{\omega(u_R) - \omega(u)}_{L^\iny} \le C \norm{u}_{L^\iny} R^{-1}$,
	and there exists a subsequence, $(R_k)$, $R_k \to \iny$,
	and a constant vector field, $H$,
	such that $u_{R_k} \to u + H$ as $k \to \iny$ uniformly on compact subsets. 
\end{prop}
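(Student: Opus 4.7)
My plan splits along the two conclusions of the proposition.

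For the vorticity estimate, I would start from the algebraic identity obtained from $K = \grad^\perp G$ and $\Delta G = \delta_0$: applying the product rule to $\curl(a_R K) = \grad \cdot (a_R \grad G)$ gives
\begin{align*}
    \curl(a_R K) = a_R \Delta G + \grad a_R \cdot \grad G = \delta_0 + \phi_R,
\end{align*}
where $\phi_R := \grad a_R \cdot \grad G$ is supported on the annulus where $\grad a_R$ is nonzero. Convolving with $\omega$ yields the exact identity $\omega(u_R) - \omega = \phi_R * \omega$. Using $\omega = \prt_1 u^2 - \prt_2 u^1$ and integrating by parts (no boundary terms arise, since $\phi_R$ has compact support), the right-hand side becomes an integral of $\grad \phi_R$ against the components of $u$. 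The pointwise bounds $|\grad^k a_R| \le C R^{-k}$ and $|\grad^k G| \le C R^{-k}$ on the annular support (of area $\sim R^2$) yield $\|\grad \phi_R\|_{L^1} \le C R^{-1}$ via the product rule, so $\|\omega(u_R) - \omega\|_{L^\iny} \le C \|u\|_{L^\iny} R^{-1}$.

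For the velocity convergence, the parallel argument applied to $\dv(a_R K) = \grad a_R \cdot K$ (noting $\dv K = 0$) gives $\|\dv(u_R)\|_{L^\iny} \le C \|u\|_{L^\iny} R^{-1}$. Setting $v_R := u_R - u$, both $\omega(v_R)$ and $\dv(v_R)$ vanish in $L^\iny$ at rate $1/R$. To apply Arzel\`a--Ascoli, I need uniform local regularity of $v_R$. The matrix-valued $\grad u_R$ is a convolution of the truncated Calder\'on--Zygmund kernel $\grad(a_R K)$ with $\omega \in L^\iny$, giving uniform-in-$R$ bounds in $BMO(\R^2)$; combined with the analogous Riesz-transform bound on $\grad u$, $\grad v_R$ is uniformly bounded in $BMO$. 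The Campanato embedding then yields uniform log-Lipschitz regularity for $v_R$ on every compact set, so that $v_R - v_R(0)$ is uniformly bounded and uniformly equicontinuous on every compact. A diagonal extraction produces a subsequence along which $v_{R_k} - v_{R_k}(0) \to \tilde W$ uniformly on compacts.

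Passing the curl and divergence identities to the limit shows that $\tilde W$ is a smooth harmonic vector field on $\R^2$ with $\tilde W(0) = 0$. The uniform log-Lipschitz estimate forces $\tilde W$ to grow at most logarithmically, so Liouville's theorem for harmonic functions of sublinear polynomial growth gives $\tilde W \equiv 0$. To conclude $v_{R_k} \to H$ for some constant vector $H$, a further subsequence extraction is required to make the scalars $v_{R_k}(0)$ converge. The main obstacle I anticipate is establishing that $v_R(0)$ (equivalently $u_R(0)$) remains bounded along a subsequence: $\|u_R\|_{L^\iny}$ grows at most like $R$ a priori, so the boundedness does not follow from the $BMO$ bound on $\grad u_R$ alone and must instead exploit the structure of $u \in S$ directly---for instance, by combining the uniform control on $u_R - (u_R)_{B_1}$ (from $BMO$ regularity of $\grad u_R$) with the $L^\iny$ bound on $u$ to extract a subsequence along which the averages $(u_R)_{B_1}$ converge.
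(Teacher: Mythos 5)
The first half of your argument (the vorticity estimate) is correct and coincides with the paper's: the identity $\omega(a_R K) = a_R\delta_0 + \grad^\perp a_R \cdot K$ (your $\delta_0 + \phi_R$), followed by one integration by parts to move a derivative from $\omega = \curl u$ onto the compactly supported kernel, gives exactly the $O(\pnorm{u}_{L^\iny}R^{-1})$ bound; this is the content of \refL{OrderaRK}.

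The second half has a genuine gap, and it is precisely the obstacle you flag at the end: you never establish that $(u_R)$ is bounded in $L^\iny(\R^2)$ uniformly in $R$, and your proposed workaround does not supply it. A uniform $BMO$ bound on $\grad u_R$ controls only oscillations; it says nothing about the averages $(u_R)_{B_1}$, and ``extracting a subsequence along which the averages converge'' presupposes exactly the boundedness you lack. The $L^\iny$ bound on $u$ does not transfer to $u_R$ by soft means: writing $u_R=(a_RK)*\curl u$ and integrating by parts naively gives terms $(a_R\,\prt_j K)*u^i$, and $a_R\,\grad K$ has $L^1$ norm of order $\log R$, so this route only yields $\pnorm{u_R}_{L^\iny}=O(\log R)$. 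The missing ingredient is the exact identity $(a_RK)*\omega(u)=\omega(a_RK)*u$ (\refL{AlmostBSu} in the paper), whose proof uses $\dv u=0$ together with $\dv K=0$ to cancel precisely those logarithmically divergent pieces; combined with $\omega(a_RK)=\delta_0+\grad^\perp a_R\cdot K$ it gives $u_R=u+(\grad^\perp a_R\cdot K)*u$, and since $\grad^\perp a_R\otimes K$ is uniformly in $L^1$ this yields $\pnorm{u_R}_{L^\iny}\le C\pnorm{u}_{L^\iny}$. With that bound $(u_R)$ is bounded in $S$, \refL{Morrey} gives equicontinuity, Arzel\`a--Ascoli applies to $u_R$ itself (no normalization by $u_R(0)$), and the subsequential limit differs from $u$ by a bounded harmonic field, hence a constant. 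Your Liouville step is also weaker than claimed: a uniform $BMO$ bound on the gradient gives at best $O(\abs{x}\log\abs{x})$ growth for the limit, which does not exclude nontrivial linear divergence-free, curl-free fields such as $(x_1,-x_2)$; it is the uniform $L^\iny$ bound that makes the harmonic field constant.
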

\begin{proof}
	First observe that $u_R$ is well-defined as a tempered distribution by \refL{Conv},
	since $a_R K \in \Cal{E}'$.
	Also by that lemma,
	\begin{align*}
		\dv u_R
			= (\dv (a_R K)) * \omega
			= (\grad a_R \cdot K + a_R \dv K) * \omega
			= 0 * \omega
			= 0,
	\end{align*}
	since $\grad a_R \cdot K = 0$, $a_R$ being radially symmetric,
	and $\dv K = 0$.
		
	Then, from \refL{AlmostBSu},
	\begin{align}\label{e:uRCalc}
		\begin{split}
			u_R
				&= \omega(a_R K) * u % - (\grad a_R \cdot K) * u^\perp \\
				= (a_R \omega(K) + \grad^\perp a_R \cdot K) * u \\
					% - (\grad a_R \cdot K) * u^\perp \\
				&= (a_R \delta + \grad^\perp a_R \cdot K) * u
				    % - (\grad a_R \cdot K) * u^\perp
				% = u - h_R,
				= u + (\grad^\perp a_R \cdot K) * u.
		\end{split}
	\end{align}
	% where $h_R = - (\grad a_R \cdot K) * u^\perp + (\grad^\perp a_R \cdot K) * u = O(1)$
	But, $(\grad^\perp a_R \cdot K) * u$ is $O(1)$
	by \refL{OrderaRK}, so
	$(u_R)$ is bounded in $L^\iny$.	
	
	Since also $\omega((\grad^\perp a_R \cdot K) * u) = O(R^{-1})$ by \refL{OrderaRK}, we have
	\begin{align*}
		\omega(u_R)
			&= O(R^{-1}) + \omega(u).
	\end{align*}
	We conclude both that $\omega(u_R) \to \omega(u)$ in $L^\iny$ and that $(u_R)$,
	already bounded
	in $L^\iny$, is bounded in $S$.
	
	By \refL{Morrey}, then, $(u_R)$ is an equicontinuous family of pointwise bounded
	functions and hence for any compact subset, $L$, of $\R^2$ some subsequence of $(u_R)$
	converges uniformly on $L$. A diagonalization argument for increasing $L$ gives a
	subsequence, $(u_{R_k})$, that converges
	uniformly on compact subsets to some $\ol{u}$ in $L^\iny$.
	At the same time, as shown above, $\omega(u_R) \to \omega(u)$ and $\dv u_R = 0$.
	
%	We now show that, in fact, $\omega(\ol{u}) = \omega(u)$.
	
	Fix a compact subset, $L$, of $\R^2$ and let $\varphi \in H_0^1(L)$. Then
	\begin{align}\label{e:omegauRConv}
		(\omega&(u_{R_k}), \varphi)
			= - (\dv u_{R_k}^\perp, \varphi)
			= (u_{R_k}^\perp, \grad \varphi)
			\to (\ol{u}^\perp, \grad \varphi)
			= (\omega(\ol{u}), \varphi).
	\end{align}
	But also $(\omega(u_R), \varphi) \to (\omega(u), \varphi)$, so $\omega(\ol{u}) = \omega(u)$ on $L$
	and hence on all of $\R^2$, since $L$ was arbitrary. Similarly, $\dv \ol{u} = \dv u = 0$.
	
	Thus,
	$\dv (u - \ol{u}) = 0$ and $\omega(u - \ol{u}) = 0$. By the identity,
	$ % \label{e:Deltav}
		\Delta v
			= \grad \dv v + \grad^\perp \omega(v),
	$
	then,
	$\Delta (u - \ol{u}) = 0$, and we conclude that
	$
		\ol{u} = u + H,
	$
	where $H$ is an harmonic polynomial.
	% (see, for instance, Proposition 1.3.1 of \cite{C1996}).
	Since $u$ and $\ol{u}$ lie in $L^\iny$, $H$ must be a constant.
	% (Or, we could argue that each component of $u - \ol{u}$ is a bounded harmonic function
	% and so must be constant.)
\end{proof}

\begin{lemma}\label{L:OrderaRK}
	Let $\al$, $\beta$ be multi-indices with $\abs{\al} \ge 1$ and $\abs{\beta} \ge 0$. Then
	\begin{align*}
		\smallnorm{D^\al a_R \otimes D^\beta K}_{L^1}
			&\le C R^{1 - \abs{\al} - \abs{\beta}}.
	\end{align*}
	Moreover, if $F \in L^\iny(\R^2)$ then
	\begin{align*}
		\smallnorm{(D^\al a_R \otimes D^\beta K) * F}_{L^\iny}
			\le C \norm{F}_{L^\iny} R^{1 - \abs{\al} - \abs{\beta}}.
	\end{align*}
\end{lemma}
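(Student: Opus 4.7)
The plan is a direct scaling/support argument followed by Young's convolution inequality.

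First I would use the scaling $a_R(x) = a(x/R)$, which gives $D^\alpha a_R(x) = R^{-|\alpha|}(D^\alpha a)(x/R)$. Since $|\alpha|\ge 1$, the function $D^\alpha a$ vanishes on the set where $a$ is identically $1$ (a neighborhood of the origin) and outside $\supp a$, so $D^\alpha a$ is supported in a fixed annulus $\{r_0 \le |y| \le r_1\}$, depending only on $a$. Consequently $D^\alpha a_R$ is supported on the annulus $A_R := \{r_0 R \le |x| \le r_1 R\}$, which has measure $\lesssim R^2$, and on this set
\begin{align*}
    |D^\alpha a_R(x)| \le C R^{-|\alpha|}.
\end{align*}

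Next, since $K(x) = (2\pi)^{-1}x^\perp/|x|^2$ is homogeneous of degree $-1$, the derivative $D^\beta K$ is homogeneous of degree $-1 - |\beta|$ and smooth away from $0$, so $|D^\beta K(x)| \le C|x|^{-1-|\beta|}$ for $x\ne 0$. On $A_R$ we therefore have $|D^\beta K(x)| \le C R^{-1-|\beta|}$. Combining these bounds on $A_R$ and using $|A_R|\le C R^2$ yields
\begin{align*}
    \|D^\alpha a_R \otimes D^\beta K\|_{L^1}
        \le C R^{-|\alpha|}\, R^{-1-|\beta|}\, R^2
        = C R^{1 - |\alpha| - |\beta|},
\end{align*}
which is the first claimed bound. (There is no issue with the singularity of $K$ at $0$ because $D^\alpha a_R$ vanishes in a neighborhood of the origin when $R>0$ and $|\alpha|\ge 1$.)

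Finally, for $F\in L^\infty(\R^2)$, Young's convolution inequality with exponents $(1,\infty,\infty)$ gives
\begin{align*}
    \|(D^\alpha a_R \otimes D^\beta K) * F\|_{L^\infty}
        \le \|D^\alpha a_R \otimes D^\beta K\|_{L^1}\,\|F\|_{L^\infty}
        \le C\,\|F\|_{L^\infty}\,R^{1 - |\alpha| - |\beta|},
\end{align*}
completing the proof. I do not foresee a serious obstacle here: the only point requiring a moment's care is noting that $|\alpha|\ge 1$ forces $D^\alpha a_R$ to be supported on an annulus away from the singularity of $K$, so that the pointwise bound on $D^\beta K$ is valid on the entire support of the integrand.
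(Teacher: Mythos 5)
Your argument is correct and is essentially the same as the paper's own proof: support of $D^\al a_R$ in an annulus of radii comparable to $R$, the pointwise bounds $\smallabs{D^\al a_R} \le C R^{-\abs{\al}}$ and $\smallabs{D^\beta K} \le C R^{-1-\abs{\beta}}$ there, the area factor $R^2$, and Young's inequality for the second claim. No gaps.
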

\begin{proof}
	The $L^1$-bound follows because $D^\al a_R$ is supported on an annulus of inner radius,
	$c_1 R$, and outer radius, $c_2 R$, for some $0 < c_1 < c_2$,
	and is bounded by $C R^{-\al}$ on this annulus, while $\abs{\prt_\beta K} \le C R^{-\beta - 1}$
	on this annulus. The bound $(D^\al a_R \otimes D^\beta K) * F$ then follows from
	Young's convolution inequality.
\end{proof}

\begin{lemma}\label{L:stardot}
	For all $f \in \Cal{E}'$, $v \in (\Cal{S}')^2$,
	\begin{align*}
		\grad f \stardot v
			= f * \dv v,
	\end{align*}
	where the $\stardot$ operator is as in \refD{stardot}.
\end{lemma}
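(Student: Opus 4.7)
The plan is to unfold both sides of the claimed identity using Definition~\ref{D:stardot} and then invoke the standard rule that convolution of a compactly supported distribution with a tempered distribution commutes with partial derivatives, allowing the derivative to be moved from one factor to the other.

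First I would write out the left-hand side, using the summation convention of Definition~\ref{D:stardot}:
\begin{align*}
    \grad f \stardot v
        = (\prt_i f) * v^i,
\end{align*}
and similarly the right-hand side as $f * (\prt_i v^i)$. Both convolutions are well-defined in $\Cal{S}'$: since $f \in \Cal{E}'$, so is each $\prt_i f$, and convolution of a compactly supported distribution with a tempered distribution yields a tempered distribution. The partial derivatives of $v \in (\Cal{S}')^2$ likewise lie in $\Cal{S}'$.

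Next I would apply the basic rule $\prt_i(f * v^i) = (\prt_i f) * v^i = f * (\prt_i v^i)$, which holds componentwise whenever one of the two distributions has compact support. Thus for each index $i$,
\begin{align*}
    (\prt_i f) * v^i = f * (\prt_i v^i),
\end{align*}
and summing over $i = 1, 2$ yields
\begin{align*}
    \grad f \stardot v = f * (\prt_i v^i) = f * \dv v,
\end{align*}
as required.

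There is no genuine obstacle here; the only care needed is to check that all convolutions make sense as tempered distributions, which is immediate from $f \in \Cal{E}'$, and that the classical differentiation rule for convolutions transfers without change to the $\Cal{E}' * \Cal{S}'$ setting. The identity then follows by linearity in the summed index.
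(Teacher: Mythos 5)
Your proposal is correct and follows exactly the paper's own argument: unfold the $\stardot$ notation and apply the rule $(\prt_i f) * v^i = f * (\prt_i v^i)$, valid since $f \in \Cal{E}'$ (this is the paper's \refL{Conv}), then sum over $i$. No issues.
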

\begin{proof}
    Using \refL{Conv},
    \begin{align*}
        \grad f \stardot v
            = \prt_i f *v^i
            = f * \prt_i v^i
            = f * \dv u.
    \end{align*}
\end{proof}

\begin{lemma}\label{L:AlmostBSu}
	For any $u \in S$,
	$
		(a_R K) * \omega(u)
			= \omega(a_R K) * u.
%				- (\grad a_R \cdot K) * u^\perp.
	$
\end{lemma}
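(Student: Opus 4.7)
Both convolutions make sense as tempered distributions: $a_R K$ and $\omega(a_R K)$ lie in $\mathcal{E}'(\R^2)$ (supported in $\supp a_R$), while $\omega(u),\, u \in L^\iny(\R^2) \subset \mathcal{S}'(\R^2)$. Hence \refL{Conv} applies, and the convolution-derivative identity $f * \prt_i g = (\prt_i f) * g$ is valid for moving derivatives freely between the two factors.

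The plan is to reduce the identity to a direct componentwise cancellation using two divergence-free facts. Computing the $j$-th component of each side and pushing one derivative from the scalar factor onto the vector factor yields
\begin{align*}
    [(a_R K) * \omega(u)]^j
        &= (a_R K^j) * (\prt_1 u^2 - \prt_2 u^1)
        = \prt_1(a_R K^j) * u^2 - \prt_2(a_R K^j) * u^1, \\
    [\omega(a_R K) * u]^j
        &= \prt_1(a_R K^2) * u^j - \prt_2(a_R K^1) * u^j.
\end{align*}
The two key inputs are then $\dv u = 0$ (from $u \in S$) and $\dv(a_R K) = 0$; the latter follows from $\dv K = 0$ together with $\grad a_R \cdot K = 0$, which holds because $a_R$ is radial so $\grad a_R(x) \parallel x$, while $K(x) = x^\perp/(2\pi \abs{x}^2) \perp x$.

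For $j = 1$, the difference of the two expressions equals $\prt_1(a_R K^1) * u^2 - \prt_1(a_R K^2) * u^1$; substituting $\prt_1(a_R K^1) = -\prt_2(a_R K^2)$ from $\dv(a_R K) = 0$ and pushing the remaining derivatives back onto $u$ yields $-(a_R K^2) * (\prt_1 u^1 + \prt_2 u^2) = -(a_R K^2) * \dv u = 0$. The case $j = 2$ is symmetric, using $\prt_2(a_R K^2) = -\prt_1(a_R K^1)$ and producing $(a_R K^1) * \dv u = 0$.

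I do not anticipate a serious obstacle: the only nontrivial point is justifying that derivatives in convolutions may be moved between factors in the $\mathcal{E}' \times \mathcal{S}'$ pairing, which is standard once \refL{Conv} is invoked. Everything else is a four-line cancellation driven by the two divergence conditions; the radial symmetry of $a_R$ is used precisely to secure $\dv(a_R K) = 0$, which explains why that hypothesis is natural (though, as remarked in the paper, not strictly necessary, since any smooth truncation with $\dv(a_R K)$ absorbed elsewhere would suffice).
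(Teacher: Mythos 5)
Your proof is correct and follows essentially the same route as the paper's: a componentwise computation, moving derivatives across the $\mathcal{E}'*\mathcal{S}'$ convolution via \refL{Conv}, and cancelling using $\dv u = 0$ together with $\grad a_R \cdot K = 0$ (radial symmetry) and $\dv K = 0$. The only difference is cosmetic: you package the latter two facts up front as $\dv(a_R K) = 0$ and substitute $\prt_1(a_R K^1) = -\prt_2(a_R K^2)$ directly, whereas the paper expands with the product rule and invokes \refL{stardot} at the end; the cancellations are identical.
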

\begin{proof}
	% A straightforward calculation utilizing both $\dv K = 0$
	% and $\dv u = 0$ shows that $(a_R K) * \omega(u)
	% = \omega(a_R K) * u - (\grad a_R \cdot K) * u^\perp$.
	% But $\grad a_R \cdot K = 0$ because $a_R$ is radially symmetric.

	We will show that $w := (a_R K) * \omega(u) - \omega(a_R K) * u = 0$.
	We have,
	\begin{align*}
		w^i
			&= (a_R K^i) * (\prt_1 u^2 - \prt_2 u^1)
				- (\prt_1 (a_R K^2) - \prt_2 (a_R K^1)) * u^i \\
			&= \prt_1 (a_R K^i) * u^2 - \prt_2 (a_R K^i) * u^1
				- (\prt_1 (a_R K^2) - \prt_2 (a_R K^1)) * u^i.
	\end{align*}
	Then,
	\begin{align*}
		w^1
			&= \prt_1 (a_R K^1) * u^2 - \prt_2 (a_R K^1) * u^1
				- (\prt_1 (a_R K^2) - \prt_2 (a_R K^1)) * u^1 \\
			&= \prt_1 (a_R K^1) * u^2  - \prt_1 (a_R K^2) * u^1 \\
			&= (\prt_1 a_R K^1) * u^2  - (\prt_1 a_R K^2) * u^1
			    + (a_R \prt_1 K^1) * u^2  - (a_R \prt_1 K^2) * u^1 \\
			&= (\prt_1 a_R K^1) * u^2  - (\prt_1 a_R K^2) * u^1
			        - (a_R \prt_2 K^2) * u^2  - (a_R \prt_1 K^2) * u^1 \\
			&= (\prt_1 a_R K^1) * u^2  - (\prt_1 a_R K^2) * u^1
			        + (\prt_2 a_R K^2) * u^2  + (\prt_ 1 a_R K^2) * u^1 \\
			&\qquad
				- \prt_2 (a_R K^2) * u^2  - \prt_ 1 (a_R K^2) * u^1 \\
			&= (\prt_1 a_R K^1) * u^2 + (\prt_2 a_R K^2) * u^2
			    - \grad (a_R K^2) \stardot u \\
			&= (\grad a_R \cdot K) * u^2
			= 0,
	\end{align*}
	since $\grad a_R \cdot K = 0$, $a_R$ being radially symmetric.
	In the fourth equality we used $\dv K = 0$, and we applied
	\refL{stardot} in the penultimate equality to deduce that
	$\grad (a_R K^2) \stardot u = (a_R K^2) * \dv u = (a_R K^2) * 0 = 0$.
	Similarly,
	\begin{align*}
		w^2
			&= \prt_1 (a_R K^2) * u^2 - \prt_2 (a_R K^2) * u^1
				- (\prt_1 (a_R K^2) - \prt_2 (a_R K^1)) * u^2 \\
			&= - \prt_2 (a_R K^2) * u^1 + \prt_2 (a_R K^1) * u^2 \\
			&= - (\prt_2 a_R K^2) * u^1 + (\prt_2 a_R K^1) * u^2
			    - (a_R \prt_2 K^2) * u^1 + (a_R \prt_2  K^1) * u^2 \\
			&= - (\prt_2 a_R K^2) * u^1 + (\prt_2 a_R K^1) * u^2
			        + (a_R \prt_1 K^1) * u^1 + (a_R \prt_2  K^1) * u^2 \\
			&= - (\prt_2 a_R K^2) * u^1 + (\prt_2 a_R K^1) * u^2
			        - (\prt_1 a_R K^1) * u^1 - (\prt_2 a_R K^1) * u^2 \\
			&\qquad
				 + \prt_1 (a_R K^1) * u^1 + \prt_2 (a_R  K^1) * u^2 \\
			&= - (\prt_2 a_R K^2) * u^1 - (\prt_1 a_R K^1) * u^1
			        + \grad(a_R K^1) \stardot u \\
			&= - (\grad a_R \cdot K) * u^1
			= 0.
	\end{align*}
	% We conclude that $w = 0$. % -(\grad a_R \cdot K) * u^\perp$.
	% Applying
	% \refL{OrderaRK} and the product rule to $w$ gives
	% $D^\al f_R^2 = O(R^{-\abs{\al}})$.
\end{proof}

\begin{remark}\label{R:RadialConvenientOnly}
	The radial symmetry of $a$ was convenient in the proof of
	\refP{NearRenormalizedConvergence}, but was not essential. Were $a$ not
	radially symmetric, another application of \refL{Conv} would give
	$
		(\grad a_R \cdot K) * \omega
			= (\grad^\perp (\grad a_K \cdot K)) \stardot u.
	$
	This is $O(R^{-1})$ by \refL{OrderaRK} (and the product rule), so $\dv u_R \to 0$
	in $L^\iny(\R^2)$, which yields $\dv \ol{u} = 0$.
	Also, \refL{AlmostBSu} would become $u_R
	= \omega(a_R K) * u - (\grad a_R \cdot K) * u^\perp$, but the extra term
	$(\grad a_R \cdot K) * u^\perp$ can be handled just as
	$(\grad^\perp a_R \cdot K) * u$ is.
\end{remark}

%
% Section
%
\subsection{Velocity in the full plane}\label{S:CharacterizationR2}

\noindent We are now in a position to establish our characterization of bounded solutions.

\begin{proof}[\textbf{Proof of \refT{CharacterizationR2} (i, ii)}]
	Suppose that $u$ is a solution to the Euler equations
	as in \refD{ESol}
	and $a$ is any radial cutoff function as in \refD{RadialCutoff}.
	Then from \refP{NearRenormalizedConvergence} there exists
	a subsequence, $(R_k)$, for which
	\begin{align*} % \label{e:NearRenormalizedConvergence}
		\begin{split}
			u(t) - u^0
				= U_\iny(t)
				+ 
					\lim_{k \to \iny} (a_{R_k} K) *
						(\omega(t) - \omega^0)
		\end{split}
	\end{align*}
	for some vector field, $U_\iny(t)$.
	% , the limit being locally uniform in $S$.
	By \refP{RenormalizationLemma} and \refR{RenormalizationLemma},
	the limit then holds for the entire sequence,
	uniformly on compact subsets of $[0, T] \times \R^2$,
	both \refEAnd{SerfatiId}{RenormalizedBS} hold, and $U_\iny \in C([0, T])$.
	Appealing to \refP{NearRenormalizedConvergence} once more, we see that the limit in
	\refE{RenormalizedBS} holds locally in $S$ (in fact, the vorticities converge in
	$L^\iny(\R^2)$).
	By \refP{SerfatiForAlla}, $U_\iny$ is independent of the choice of cutoff function, $a$.
	
	It then follows from \refE{SerfatiId}, the transport of the vorticity by the flow map, the boundedness
	of the velocity, the absolute continuity of the integral, the continuity of $u$ in $L^\iny([0, T])$,
	and the continuity of $U_\iny$,
	that $U_\iny(0) = 0$.
\end{proof}

\Ignore{ % Ignore this remark, though maybe bring something like it back alive at some point
\begin{remark}\label{R:RenormalizedBSLawRemark}
	% The renormalized Biot-Savart law, \refE{RenormalizedBS}, is the Biot-Savart law
	% in a limiting sense that it yields $u(t) - u^0$ given its vorticity.
	It follows from
	\refPAnd{NearRenormalizedConvergence}{NearRenormalizedConvergenceExt}
	that there exists a subsequence,
	$(R_k)$, such that the renormalized Biot-Savart law,
	\begin{align}\label{e:u0Limit}
		u^0(x)
			= H + 
					\lim_{k \to \iny} \int_{\R^2} a_{R_k}(x - y) K(x - y)
						\omega^0(y) \, dy,
	\end{align}
	holds for some $H$. By \refE{RenormalizedBS},
	it must therefore be true that for this same subsequence,
	\begin{align}\label{e:utxLimit}
		u(t, x)
			= - U_\iny(t) + H + 
					\lim_{k \to \iny} \int_{\R^2} a_{R_k}(x - y) K(x - y)
						\omega(t, y) \, dy.
	\end{align}
	If \refE{u0Limit} holds for distinct values of $H$ on two subsequences,
	then by absolute continuity of the integral there is a subsequence on which
	it holds for any intermediate
	value. This reflects ripples of increasingly large scale in the initial vorticity, and \refE{utxLimit}
	says that these ripples persist over time. Explicit examples of initial vorticities for
	which $H$ can vary are not known.

	% This leads to the speculation that the renormalized Biot-Savart law always
	% holds for any vector field in $S$ without the need to take a subsequence.
	% This result would likely follow with only a slight refinement of the estimate
	% in \refE{uRLInfBound}.
	% Observe that it is easy to come up with examples of bounded vorticities for which
	% one would need to extract a subsequence for the renormalized Biot-Savart law,
	% but these easy examples may not yield bounded velocity.
\end{remark}
} % End Ignore this remark, though maybe bring something like it back alive at some point

%
% Section
%
\section{Existence and uniqueness in the full plane}\label{S:ExistenceR2}

\noindent Our proof of \refT{ExistenceR2} begins with the following lemma:

\begin{lemma}\label{L:COVEquiv}
    Let $(u, p)$ and $(\ol{u}, \ol{p})$ be related as in the transformation,
    \refE{COV}. Then $(u, p)$ satisfy \refE{EClassical} if and only if
    $(\ol{u}, \ol{p})$ satisfy \refE{EClassical}.
    Moreover, $u$ is a bounded solution to the Euler equations as in
    \refD{ESol} if and only if $\ol{u}$ is such a solution.
\end{lemma}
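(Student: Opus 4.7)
The plan is to view \refE{COV} as a time-dependent change of variables $\Phi(t,x) = (t,\; x + \int_0^t U_\iny(s)\,ds)$, a homeomorphism of $[0,T]\times\R^2$ whose spatial part (at each fixed $t$) is just a translation with unit Jacobian. Pullback by $\Phi$ therefore preserves the function space $L^\iny(0,T;S) \cap C([0,T]\times\R^2)$, commutes with spatial differentiation, and is compatible with distributional derivatives in $t$ provided one inserts the chain-rule term $U_\iny(t)\cdot\grad$. Both directions of each claim will follow by symmetry once we prove one direction, since $\Phi^{-1}$ has the same form with $-U_\iny$ in place of $U_\iny$.

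For the first assertion (classical equivalence), the direct chain rule gives $\prt_t \ol{u}(t,x) = (\prt_t u + U_\iny\cdot \grad u)(t,\ol{x}) - U_\iny'(t)$, $\grad_x \ol{u}(t,x) = (\grad u)(t,\ol{x})$, and $\grad_x \ol{p}(t,x) = (\grad p)(t,\ol{x}) + U_\iny'(t)$. Substituting into the momentum equation for $(\ol u, \ol p)$ and using $\ol{u} = u(t,\ol{x}) - U_\iny(t)$, the $U_\iny\cdot\grad u$ contributions cancel against the nonlinear term and the two $U_\iny'(t)$ terms cancel, leaving exactly the momentum equation for $(u,p)$ evaluated at $(t,\ol{x})$. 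The divergence-free condition transfers since $\dv_x \ol{u}(t,x) = (\dv u)(t,\ol{x})$, and the initial condition matches because $\ol{x}(0,x) = x$ and $U_\iny(0) = 0$.

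For the bounded-solution equivalence, one checks that each of the three ingredients in \refD{ESol} transfers between $u$ and $\ol{u}$. The function-space membership $\ol u \in L^\iny(0,T;S) \cap C([0,T]\times\R^2)$ is immediate from continuity of $U_\iny$ together with the fundamental identity $\ol{\omega}(t,x) = \omega(t,\ol{x})$, which holds because subtracting the spatially constant $U_\iny(t)$ does not affect the curl. The distributional vorticity equation $\prt_t \ol{\omega} + \ol{u}\cdot\grad\ol{\omega} = 0$ is then obtained by testing against $\varphi \in C^\iny_c((0,T)\times\R^2)$ and pulling back via $\Phi$: since $\Phi$ has unit Jacobian and the chain-rule term $U_\iny \cdot \grad\omega$ combines with $-U_\iny \cdot \grad\omega$ coming from $\ol u = u - U_\iny$, the identity reduces cleanly to the corresponding one for $\omega$ tested against $\varphi \circ \Phi^{-1}$. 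Finally, if $\eta$ is the classical flow of $u$, define $\ol{\eta}(t,x) = \eta(t,x) - \int_0^t U_\iny(s)\,ds$; then $\prt_t \ol{\eta}(t,x) = u(t,\eta(t,x)) - U_\iny(t) = \ol{u}(t, \ol{\eta}(t,x))$, so $\ol{\eta}$ is the flow of $\ol u$, and transport of $\ol\omega$ by $\ol\eta$ follows from transport of $\omega$ by $\eta$ together with $\ol\omega = \omega \circ \Phi$.

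There is no real analytic difficulty here; the obstacle is purely bookkeeping, ensuring the change of variable cancellations are correctly tracked. The one place requiring a moment of care is the distributional vorticity equation, because $U_\iny$ is only continuous in $t$ so $\prt_t \ol u$ has the same limited time regularity as $\prt_t u$; but since all the manipulations are performed by pairing against smooth compactly supported test functions and $\Phi$ is smooth in $x$ and continuous in $t$, the argument goes through without issue.
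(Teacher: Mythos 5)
Your proposal is correct and follows essentially the same route as the paper: the same chain-rule identities for $\prt_t\ol{u}$, $\grad\ol{u}$, $\grad\ol{p}$, and $\dv\ol{u}$, the same cancellation of the $U_\iny\cdot\grad u$ and $U_\iny'$ terms in the momentum equation, the same transfer of the vorticity equation via $\ol{\omega}(t,x)=\omega(t,\ol{x})$, and the same flow-map relation (your $\ol{\eta}(t,x)=\eta(t,x)-\int_0^t U_\iny(s)\,ds$ is exactly the paper's $X(t,x)=\ol{Y(t,x)}$ read in the other direction). The only cosmetic difference is that you phrase the distributional vorticity equation through an explicit pullback of test functions, where the paper simply records the pointwise chain-rule identity; both are adequate at the level of rigor used here.
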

\begin{proof}
    Applying the chain rule gives,
    \begin{align*}
        \prt_t \ol{u}(t, x)
            % &= \prt_t u(t, \ol{x}) + Du(t, \ol{x})
            % D \ol{x} - U_\iny'(t) \\
            &= \prt_t u(t, \ol{x}) + U_\iny(t) \cdot
                \grad u(t, \ol{x})
                - U_\iny'(t), \\
        \grad \ol{u}(t, x)
            % &= \grad \ol{x} \grad u(t, \ol{x})
            &= \grad u(t, \ol{x}), \\
        \grad \ol{p}(t, x)
            &= \grad p(t, x) + U_\iny'(t), \\
        \dv \ol{u}(t, x)
            % &= \prt_{\ol{x}_i} u^j(t, \ol{x}) \prt_{x_j} \ol{x}^i
            % = \grad u(t, \ol{x}) \cdot I
            &= \dv u(t, \ol{x}),
    \end{align*}
    from which it follows that
    \begin{align*}
        \prt_t \ol{u}(&t, x) + \ol{u}(t, x) \cdot \grad \ol{u}(t, x)
                + \grad \ol{p}(t, x) \\
            % &= \prt_t u(t, \ol{x}) + U_\iny(t) \cdot
            % \grad u(t, \ol{x})
            %         - U_\iny'(t) \\
            % &\qquad\qquad
            %     + \pr{u(t, \ol{x}) - U_\iny(t)} \cdot
            % \grad u(t, \ol{x})
            %     + \grad p(t, \ol{x}) + U_\iny'(t) \\
            &= \prt_t u(t, \ol{x})
                + u(t, \ol{x}) \cdot \grad u(t, \ol{x})
                + \grad p(t, \ol{x}).
    \end{align*}
    Thus, $(u, p)$ satisfies \refE{EClassical} if and only if
    $(\ol{u}, \ol{p})$ satisfies \refE{EClassical} (since
    $U_\iny(0) = 0$).
    
    Let $\ol{\omega} = \curl \ol{u}$. Then the chain rule gives
    \begin{align*}
        \ol{\omega}(t, x)
            & = \omega(t, \ol{x}), \\
        \prt_t \ol{\omega}(t, x)
            &= \prt_t \omega(t, \ol{x})
                + \prt_t \ol{x} \cdot \grad \omega(t, \ol{x}) \\
            &= \prt_t \omega(t, \ol{x})
                + U_\iny(t) \cdot \grad \omega(t, \ol{x}), \\
        \grad \ol{\omega}(t, x)
            &= \grad \omega(t, \ol{x}), 
    \end{align*}
    from which it follows that
    \begin{align*}
        \prt_t \ol{\omega}(t, x)
                + \ol{u}(t, x) \cdot \grad \ol{\omega}(t, x)
            = \prt_t \omega(t, \ol{x})
                    + u(t, \ol{x}) \cdot \grad \omega(t, \ol{x}).
    \end{align*}
    Hence, the vorticity equation of the Euler equations is satisfied
    in \refD{ESol} for $u$ if and only if it is satisfied for $\ol{u}$.
    
    Let $X$, $Y$ be the flow maps for $u$, $\ol{u}$, respectively.
    The flow maps are related by the identity, $X(t, x) = \ol{Y(t, x)}$,
    since then
    \begin{align*}
        \prt_t X(t, x)
            &= \prt_t \pr{Y(t, x) + \int_0^t U_\iny(s) \, ds}
            = \ol{u}(t, Y(t, x)) + U_\iny(t) \\
            &= u(t, \ol{Y(t, x)})
            = u(t, X(t, x)).
    \end{align*}
    Thus, $\omega(t, X(t, x)) = \omega^0(x)$ for all $t$, $x$ if and only if
    $\ol{\omega}(t, Y(t, x)) = \omega^0(x)$ for all $t$, $x$ since
    $\ol{\omega}(t, Y(t, x)) = \omega(t, \ol{Y(t, x)}) = \omega(t, X(t, x))$.
\end{proof}

\begin{proof}[\textbf{Proof of \refT{ExistenceR2}}]
    Assume that $u^0 \in S$, let $T > 0$ be arbitrary, and fix
    $U_\iny \in (C[0, T])^2$ with $U_\iny(0) = 0$.
    Let $\ol{u}^0 = u^0 - U_\iny(0) = u^0$, and
    let $\ol{u}$
    be the Serfati solution with initial velocity $\ol{u}^0$
    % with time variable $t$ and spatial variable $\ol{x}$
    constructed in \cite{AKLL2014}. Then, as shown in
    \cite{AKLL2014},
    $\ol{u}$ is the unique bounded solution satisfying
    \ThmProp{i} of \refT{CharacterizationR2}
    with $U_\iny \equiv 0$. By \refT{CharacterizationR2},
    \ThmProp{ii} is equivalent to \ThmProp{i}, and so also holds.
    Making the inverse change of
    variables from that in \refE{COV} then yields a
    bounded solution, $(u, p)$, satisfying \ThmProp{i} and
    \ThmProp{ii} with the original $U_\iny$.
    This also gives uniqueness criteria (a) and (b).
    
    That \ThmPropThrough{iii}{v} hold for $(u, p)$ will be shown
    when we establish
    the properties of the pressure in \refS{PressureR2}.

    Uniqueness criteria (c) is proved, for $U_\iny \equiv 0$,
    in \cite{TaniuchiEtAl2010}, and it can also be adapted
    to a nonzero $U_\iny$ using
    the change of variables in \refE{COV}. Finally,
    we observe that uniqueness
    criteria (d) immediately implies (c).
\end{proof}

\begin{remark}\label{R:uSBound}
    The solution, $\ol{u}$, constructed in \cite{AKLL2014}
    (and hence, by uniqueness, any such solution)
    also has the property that
    \begin{align*}
        \norm{\ol{u}(t)}_{L^\iny}
            \le e^{C(1 + \smallnorm{\omega^0}_{L^\iny}) t} \smallnorm{u^0}_{L^\iny}.
    \end{align*}
    Also, $\norm{\omega(\ol{u})(t)}_{L^\iny} = \smallnorm{\omega^0}_{L^\iny}$,
    since vorticity is transported by the flow map. Hence,
    \begin{align*}
        \norm{\ol{u}(t)}_S
            \le e^{C(1 + \smallnorm{\omega^0}_{L^\iny}) t} \smallnorm{u^0}_S.
    \end{align*}
    Then, since $\norm{u(t)}_S = \norm{\ol{u}(t) - U_\iny(t)}_S \le \norm{\ol{u}(t)}_S
    + \norm{U_\iny(t)}$, we have
    \begin{align}\label{e:uSBound}
        \norm{u(t)}_S
            \le C_S(t) \smallnorm{u^0}_S + \norm{U_\iny(t)},
            \text{ where }
            C_S(t) = e^{C(1 + \smallnorm{\omega^0}_{L^\iny}) t}.
    \end{align}
\end{remark}

The convenient transformation in \refE{COV} allowed us to simply use the existence and uniqueness theorem of \cite{AKLL2014}, avoiding the need to modify its proof to accommodate $U_\iny \not \equiv 0$. To establish the properties of the pressure in \refT{CharacterizationR2}, however, we need the approximate sequence of smooth velocities, $(u_n)$, used in \cite{AKLL2014} to obtain existence of a solution. Adjusting the sequence in \cite{AKLL2014} to accommodate $U_\iny$ by employing a sequence, $(U^n_\iny)$, converging to $U_\iny$ leads to a sequence, $(u_n)$, of approximate classical solutions with the following properties:
\begin{align}\label{e:unConvForPressureR2}
	\begin{split}
		&(u_n) \text{ is bounded in } C([0, T] \times S), \\
		&u_n \to u \text{ uniformly on compact subsets of }
		    [0, T] \times \R^2, \\
		&\omega(u_n) \to \omega(u) \text{ in } L^p_{loc}(\R^2)
			\text{ for all $p$ in } [1, \iny), \\
		&u_n(t, x) = U^n_\iny(t) + O(\abs{x}^{-1}), \\
		&U_\iny^n \to U_\iny \text{ in } C([0, T]), \\
		&(U_\iny^n)' \to U_\iny' \text{ in } \Cal{D}'((0, T)).
	\end{split}
\end{align}
We will use these properties in \refS{PressureR2}.

%
% Section
%
\section{The pressure in the full plane}\label{S:PressureR2}

\noindent In this section, we characterize the pressure for solutions to the 2D Euler equations in the full plane as in $\refE{CharacterizationR2}_{2, 3}$, stated more precisely as properties \ThmProp{iii}-\ThmProp{v} of \refT{CharacterizationR2}.

To understand the difficulties in characterizing the asymptotic behavior of the pressure at infinity, consider first the simpler case of a smooth solution, $u$, to the Euler equations having compactly supported vorticity with $u$ vanishing at infinity. In such a case, $u$ decays like $C \abs{x}^{-1}$ at infinity, while $\grad u$ decays like $C \abs{x}^{-2}$ (as in \refL{graduDecayR2}).

Taking the divergence of $\prt_t u + u \cdot \grad u + \grad p = 0$, we see that $p$ is a solution to $\Delta p = - \dv(u \cdot \grad u) = - \dv \dv (u \otimes u)$. A particular solution is given by $q = R (u \otimes u)$ for the (multiple) Riesz transform, $R = - \Delta^{-1} \dv \dv$. Any other solution differs from $q$ by an harmonic polynomial, $h(t)$, so $p = h + q$.

The decay of $u$ gives $u \otimes u \in L^r(\R^2)$ for all $r \in (1, \iny]$. By the Calder\'{o}n-Zygmund theory, then, $q \in L^r(\R^2)$ for all $r \in (1, \iny)$, so it decays at infinity. Moreover, $\grad q = T (u \cdot \grad u)$, where $T = - \Delta^{-1} \grad \dv$ is also a singular integral operator of Calder\'{o}n-Zygmund type. From the decay of $u \cdot \grad u$ follows the decay of $\grad q$ at infinity. Then the decay, after integrating in time, of $\prt_t u + u \cdot \grad u$ at infinity forces $h$ to be constant in space. We conclude that there exists a unique pressure decaying at infinity.

Now let $u$ be a bounded solution to the Euler equations of \refD{ESol}. We can still obtain a particular solution, $q = R (u \otimes u)$, to $\Delta p = - \dv \dv (u \otimes u)$ using the above argument because $R$ maps $L^\iny$ into $BMO$, and $u \otimes u \in L^\iny$. A bound on the growth of $q$ at infinity could also be obtained formally by applying \refP{PressureConvR2} (this lemma is at the heart of the matter), and rigorously by making a simple approximation argument. Then, arguing as above, we can conclude that \textit{if} a valid pressure exists then it differs from $q$ by an harmonic polynomial, $h$.

To determine, $h$, however, we would need to understand the behavior at infinity of $\prt_t u + u \cdot \grad u$ (at least integrated over time) to obtain a pressure $p = q + h$ satisfying $\prt_t u + u \cdot \grad u + \grad p = 0$. But even the behavior of $u$ at infinity is defined only in the weak sense of $\refE{CharacterizationR2}_1$; it appears to be impossible to say anything useful about the behavior of $\prt_t u + u \cdot \grad u$ at infinity.

These difficulties naturally lead us to the idea of using an approximate sequence of vector fields, $(u_n)$, decaying sufficiently rapidly at infinity and converging in an appropriate sense to $u$. We could construct such a sequence in an ad hoc manner, but we already have such a sequence at hand: the sequence of approximate solutions with the properties given in \refE{unConvForPressureR2}. This sequence has the virtue that the approach we described above for obtaining a pressure applies to it (after making the transformation in \refE{COV}), so there exists a corresponding sequence of pressures, $(p_n)$, for which $\prt_t u_n + u_n \cdot \grad u_n + \grad p_n = 0$. We will show that this sequence of pressures converges to our desired pressure.

Our proof of \ThmProp{iii}-\ThmProp{v} of \refT{CharacterizationR2} begins by proving \refPThrough{SmoothPressureR2}{gradpBoundR2}, which establish properties of the pressure for the approximate solutions, $(u_n)$, of \refE{unConvForPressureR2}. Once we establish these properties, it will remain only to make an approximation argument to establish the existence of a pressure, $p$, for the velocity, $u$, having the same properties as the approximate sequence of pressures.
% These propositions require only that $u_n$ be a smooth solution to the 2D Euler equations vanishing at infinity and having compactly supported initial vorticity.
% that apply to any smooth solutions having compactly supported vorticity.
% We defer the proofs of these propositions, which are fairly technical, to later subsections.

% Following these propositions, we give the body of the proof, which uses the properties established in these propositions to obtain an appropriate pressure for $u$ in the limit as $n \to \iny$.
% We defer the proof of the last proposition to \refS{gradpBoundR2} because of its length.

Our first proposition provides an explicit expression for the pressure, $p_n$:
\begin{prop}\label{P:SmoothPressureR2}
    Let $G(x) = (2 \pi)^{-1} \log \abs{x}$, the fundamental solution to
    the Laplacian in $\R^2$. Let
    \begin{align}\label{e:pqnR2}
	     \begin{split}
			    q_n(t, x)
				    &= a_n(t)
				        - G*\dv \dv (u_n(t) \otimes u_n(t))(x), \\
			    p_n(t, x)
				    &= -(U_\iny^n)'(t) \cdot x
					        + q_n(t, x),
	    \end{split}
    \end{align}
where $a_n(t)$ is chosen so that $p_n(t, 0) = q_n(t, 0) = 0$ for all $t$. Then $\prt_t u_n + u_n \cdot \grad u_n + \grad p_n = 0$.
\end{prop}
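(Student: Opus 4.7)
The plan is to pass to the decaying frame via \refE{COV}, solve for the pressure there using classical Euler theory, and then transform back. Set $Y_n(t) = \int_0^t U^n_\iny(s)\,ds$ and $\ol{u}_n(t, x) = u_n(t, x + Y_n(t)) - U^n_\iny(t)$; by \refL{COVEquiv}, $\ol{u}_n$ is a classical Euler solution. From $\refE{unConvForPressureR2}_4$, $\ol{u}_n(t, x) = O(\abs{x}^{-1})$; moreover $\omega(\ol{u}_n)$ is smooth and compactly supported, so the Biot-Savart law yields $\grad \ol{u}_n = O(\abs{x}^{-2})$, and since $\prt_t \omega(\ol{u}_n) = -\dv(\ol{u}_n\, \omega(\ol{u}_n))$ is compactly supported with vanishing integral, the same Biot-Savart computation also gives $\prt_t \ol{u}_n = O(\abs{x}^{-2})$.

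Define $\ol{p}_n(t, x) = \ol{b}_n(t) - G * \dv\dv(\ol{u}_n \otimes \ol{u}_n)(x)$, with $\ol{b}_n(t)$ to be fixed. The source $\dv\dv(\ol{u}_n \otimes \ol{u}_n) = \prt_i \ol{u}_n^j\, \prt_j \ol{u}_n^i = O(\abs{x}^{-4})$ is integrable, and integrating by parts on $B_R$ and letting $R \to \iny$ shows $\int_{\R^2} \dv\dv(\ol{u}_n \otimes \ol{u}_n) = 0$. This kills the logarithmic tail of the Newton potential and forces $G * \dv\dv(\ol{u}_n \otimes \ol{u}_n)(x) \to 0$ as $\abs{x} \to \iny$. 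Taking the divergence of the Euler equation for $\ol{u}_n$ with any valid pressure $\tilde p_n$ gives $\Delta \tilde p_n = -\dv\dv(\ol{u}_n \otimes \ol{u}_n) = \Delta \ol{p}_n$, so $\tilde p_n - \ol{p}_n$ is harmonic in $x$. The decay $\grad \tilde p_n = -(\prt_t \ol{u}_n + \ol{u}_n \cdot \grad \ol{u}_n) = O(\abs{x}^{-2})$ forces $\tilde p_n$ to approach a constant at infinity, and so does $\ol{p}_n$; by Liouville, $\tilde p_n - \ol{p}_n$ depends only on $t$, which we absorb into $\ol{b}_n$. Hence $(\ol{u}_n, \ol{p}_n)$ satisfies the Euler equation.

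Now set $p_n(t, y) = \ol{p}_n(t, y - Y_n(t)) - (U^n_\iny)'(t) \cdot (y - Y_n(t))$. By \refL{COVEquiv}, $(u_n, p_n)$ satisfies $\prt_t u_n + u_n \cdot \grad u_n + \grad p_n = 0$. Writing $u_n(t, y) = \ol{u}_n(t, y - Y_n(t)) + U^n_\iny(t)$, the terms $\ol{u}_n \otimes U^n_\iny$, $U^n_\iny \otimes \ol{u}_n$, and $U^n_\iny \otimes U^n_\iny$ are annihilated by $\dv\dv$ (using $\dv \ol{u}_n = 0$ and spatial constancy of $U^n_\iny$), so $\dv\dv(u_n \otimes u_n)(y) = \dv\dv(\ol{u}_n \otimes \ol{u}_n)(y - Y_n(t))$ and hence $G * \dv\dv(u_n \otimes u_n)(y) = G * \dv\dv(\ol{u}_n \otimes \ol{u}_n)(y - Y_n(t))$ by translation invariance of convolution. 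Substituting into the definition of $p_n$ and gathering the $y$-independent pieces into a single $a_n(t)$ adjusted so that $p_n(t, 0) = 0$ yields the formula in \refE{pqnR2}.

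The main obstacle is the Liouville step: showing that both $\tilde p_n$ and the Newton potential $G * \dv\dv(\ol{u}_n \otimes \ol{u}_n)$ are sufficiently tame at infinity so that their harmonic difference must be independent of $x$. This rests on the compact support of $\omega(\ol{u}_n)$ together with the vanishing of $\int_{\R^2} \dv\dv(\ol{u}_n \otimes \ol{u}_n)$, which jointly eliminate both the $\log\abs{x}$ term in the Newton potential and the slowest-decaying part of $\prt_t \ol{u}_n$.
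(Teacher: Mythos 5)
Your proof is correct and follows essentially the same route as the paper, which simply invokes the classical decaying-vorticity argument (divergence of the Euler equation, Newtonian potential, Liouville) together with the change of variables \refE{COV} and \refL{COVEquiv}; you have merely written out the details, replacing the paper's Calder\'on--Zygmund sketch of the decay of the particular solution with a direct estimate of the Newton potential using the vanishing of $\int \dv\dv(\ol{u}_n\otimes\ol{u}_n)$.
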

\begin{proof}
    This result for $U_\iny^n \equiv 0$ is classical (the argument being that given at the
    beginning of this section). For nonzero $U_\iny^n$, we simply use the transformation
    in \refE{COV} and apply the first part of \refL{COVEquiv}.
    % The proof is in \refS{DefPressureR2}.
\end{proof}

Our second proposition bounds the growth of $p_n$ (less the harmonic part) at infinity:
\begin{prop}\label{P:PressureConvR2}
	Let $q_n$ be given by $\refE{pqnR2}_1$. Then,
	\begin{align*}
        \abs{q_n(t, x)}
	        \le C C_S(t) \smallnorm{u^0}_S^2 \log (e + \abs{x})
    \end{align*}
	for some absolute constant $C$ (in particular, independent of $n$), where $C_S(t)$
	is given in \refE{uSBound}.
	Also, $q_n$ has a bound on its
	log-Lipschitz norm uniform over $[0, T]$ that is independent of $n$.
\end{prop}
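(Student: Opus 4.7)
The strategy is to recognize $q_n(t,\cdot)$ as (up to the normalization constant $a_n(t)$) a Riesz transform of $F := u_n \otimes u_n$ and then exploit both the $L^\infty$-to-$BMO$ boundedness of Riesz transforms and the vanishing $q_n(t,0)=0$ to extract the logarithmic growth. By \refE{unConvForPressureR2} and \refE{uSBound}, $\norm{F(t,\cdot)}_{L^\infty}$ is bounded uniformly in $n$ by a multiple of $C_S(t)^2\smallnorm{u^0}_S^2$ (up to absorbable contributions from $U^n_\iny$), so the only nontrivial task is converting this $L^\infty$ bound into the stated pointwise growth.

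Using $q_n(t, 0) = 0$ together with $\refE{pqnR2}_1$, we write
\begin{align*}
    q_n(t, x) = -\int_{\R^2}\brac{G(x-y) - G(-y)}\dv \dv F(t, y)\, dy.
\end{align*}
For $\abs{x} \ge 1$, split the integration at $\abs{y} = 2\abs{x}$. On the far region $\{\abs{y} > 2\abs{x}\}$, two successive integrations by parts transfer both divergences onto the difference kernel. A Taylor expansion in $\abs{x}/\abs{y}$ shows that $\grad\grad\brac{G(x-y) - G(-y)} = O(\abs{x}/\abs{y}^3)$ there, giving an $L^1_y$-kernel whose norm is bounded independently of $\abs{x}$; the contribution from the far region is thus $O(\norm{F}_{L^\infty})$. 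On the near region $\{\abs{y} \le 2\abs{x}\}$, the integrand becomes the difference of the Riesz transform $R(F \CharFunc_{B_{2\abs{x}}})$ evaluated at $x$ and $0$. By Calder\'{o}n--Zygmund theory, this Riesz transform lies in $BMO$ with norm controlled by $\norm{F}_{L^\infty}$, and for two points $x, 0 \in B_{2\abs{x}}$ the standard $BMO$-to-pointwise-difference bound yields a contribution of order $\norm{F}_{L^\infty}\log(e + \abs{x})$. The pointwise Riesz-transform estimates in the full plane that make this rigorous are developed in \refS{PoissonFullPlane}. Combining the two pieces gives the claimed logarithmic growth with an absolute constant.

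For the log-Lipschitz bound, $\norm{u_n(t)}_S$ is uniformly bounded on $[0,T]$ by \refE{uSBound} and \refE{unConvForPressureR2}, so \refL{Morrey} supplies a uniform log-Lipschitz modulus for $u_n$, and hence for $F = u_n \otimes u_n$. The log-Lipschitz MOC, $\mu_{LL}$ of \refE{muLL}, is Dini (\refD{Dini}), and singular integral operators of Calder\'{o}n--Zygmund type map Dini-continuous functions to Dini-continuous functions with a quantitative control of the modulus; applied to the Riesz transform of $F$, this yields a uniform-in-$n$ log-Lipschitz bound on $q_n$.

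The main obstacle is the careful analysis of the Riesz transform on $L^\infty$ input. A priori $RF$ is only defined modulo constants in $BMO$, so producing a pointwise bound with explicit $\log(e + \abs{x})$ growth requires selecting the representative given by convolution with the fundamental solution $G$ and anchoring it with the normalization $q_n(t, 0) = 0$. Likewise, upgrading from $L^\infty$-to-$BMO$ continuity to log-Lipschitz-to-log-Lipschitz continuity for the Riesz transform is delicate and crucially uses the Dini structure of the input. These are precisely the singular-integral-theoretic considerations postponed to \refS{PoissonFullPlane}, which constitute the technical core of the pressure estimates.
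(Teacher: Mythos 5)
Your overall framing---$q_n$ as a normalized Riesz transform of $h_n = u_n\otimes u_n$---and your treatment of the modulus of continuity essentially match the paper, which notes that $h_n$ is log-Lipschitz uniformly in $n$ by \refL{Morrey} and \refE{uSBound} and then invokes \refL{pDecay}, built on the Burch/Kiselev--Nazarov--Volberg estimate \refL{ThreeRussians}. Your far-field estimate ($\grad\grad\brac{G(x-y)-G(-y)} = O(\abs{x}/\abs{y}^3)$ for $\abs{y}>2\abs{x}$, hence an $O(\norm{F}_{L^\iny})$ contribution) is also correct.

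The gap is in the near region $\{\abs{y}\le 2\abs{x}\}$. There is no ``standard $BMO$-to-pointwise-difference bound'': $BMO$ controls oscillations of \emph{averages} over balls, not pointwise values, and for $F$ merely bounded the function $R(F\CharFunc_{B_{2\abs{x}}})$ is defined only almost everywhere --- equivalently, the principal-value integral of $F$ against the non-integrable kernel $\grad\grad G$ need not converge at a prescribed point such as $x$ or $0$. So the step producing a contribution ``of order $\norm{F}_{L^\iny}\log(e+\abs{x})$'' from Calder\'on--Zygmund theory alone is not a valid estimate; the remark following \refL{pDecay} makes exactly this point, that $h\in L^\iny$ (hence $q\in BMO$) is by itself insufficient for a pointwise logarithmic bound. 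To define and compare the near-region term at $x$ and at $0$ you must use the Dini (log-Lipschitz) modulus of $F$ there too, not only its $L^\iny$-norm---and once you do, you are reproducing the content of \refL{ThreeRussians}, whose single formula \refE{nuMOC} with $\mu = \mu_{LL}$ delivers \emph{both} conclusions at once: $\nu(r)\le CM(1+\log r)$ for $r>e^{-1}$ gives the growth bound via $\abs{q_n(t,x)} = \abs{q_n(t,x)-q_n(t,0)}\le \nu(\abs{x})$, while $\nu(r)\le CMr(\log r)^2$ for small $r$ gives the (near) log-Lipschitz modulus. The far/near decomposition is therefore unnecessary, and its near-field half cannot be closed with $L^\iny$ information alone. (A minor additional point: $q_n$ in $\refE{pqnR2}_1$ contains no $U^n_\iny$ term, so no such contribution needs absorbing.)
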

\begin{proof}
We can write $q_n = a_n(t) - R \, h_n$, where $h_n = u_n \otimes u_n$ and $R = \Delta^{-1} \dv \dv$ is a Riesz transform. Here, $\Delta^{-1} f = -\FTR(\abs{\cdot}^2 \wh{f})$, $\FTR$ being the inverse Fourier transform.
Observe that $h_n \in LL$ with $\norm{h_n(t)}_{LL} \le C \norm{u(t)}_{S}^2 \le C_S(t)^2 \smallnorm{u^0}_S^2$ by \refL{Morrey} and \refE{uSBound}. The result then follows from \refL{pDecay}.
\end{proof}

Our third proposition give an expression for $\grad p_n$ analogous to \refE{gradpR2} and shows that it is bounded:
\begin{prop}\label{P:gradpBoundR2}
    The identity,
    \begin{align}\label{e:gradpExp}
    \begin{split}
        \grad p_n&(x)
			    = -(U_\iny^n)' \\
			    &
				    + \int_{\R^2} a(x - y) K^\perp(x - y)
				        \dv \dv (u_n \otimes u_n)(y) \, dy \\
			    &
	 			    + \int_{\R^2} (u_n \otimes u_n)(y) \cdot
					    \grad_y \grad_y \brac{(1 - a(x - y)) K^\perp(x - y)}
					    \, dy,
	    \end{split}
    \end{align}
    holds independently of the choice of cutoff function,
    and $\grad p_n + (U_\iny^n)'$ is bounded uniformly in
    $L^\iny([0, T] \times \R^2)$.
\end{prop}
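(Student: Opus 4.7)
The plan is to differentiate the explicit formula from \refP{SmoothPressureR2}: since $-\grad G = K^\perp$, one immediately obtains $\grad p_n + (U_\iny^n)' = K^\perp * \dv\dv(u_n \otimes u_n)$. To introduce the cutoff I split $K^\perp = aK^\perp + (1-a)K^\perp$. The first piece yields the first integral of \refE{gradpExp} directly. For the second piece, I want to integrate by parts twice to transfer the $\dv_y\dv_y$ onto the smooth kernel $(1-a(x-\cdot))K^\perp(x-\cdot)$. The sole subtlety is that $u_n \otimes u_n$ tends only to $U_\iny^n \otimes U_\iny^n$ at infinity, so boundary terms are not automatic. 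The remedy is the observation that $\dv u_n = 0$ gives $\dv\dv(u_n \otimes u_n) = \dv\dv(w_n \otimes w_n)$ with $w_n := u_n - U_\iny^n = O(|x|^{-1})$ by \refE{unConvForPressureR2}; after this substitution $w_n$ and $\grad w_n$ decay fast enough that both integrations by parts are clean. A short check using $\int \grad_y\grad_y[(1-a)K^\perp] = 0$ (from the decay of $(1-a)K^\perp$) together with $\dv u_n = 0$ then shows that $w_n \otimes w_n$ may be replaced by $u_n \otimes u_n$ in the final integral, yielding the stated identity.

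For cutoff-independence, I compare the right-hand side of \refE{gradpExp} for two cutoffs $a$ and $b$. The difference reduces to $\int \phi(x-y)\,\dv\dv(u_n \otimes u_n)(y)\,dy - \int (u_n \otimes u_n)(y) \cdot \grad_y\grad_y \phi(x-y)\,dy$ with $\phi := (a-b)K^\perp$. Since $a = b = 1$ near the origin, $\phi$ is smooth and compactly supported, so integration by parts twice on the first integral is completely unproblematic and produces the second integral; the two cancel.

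The main obstacle is the uniform $L^\iny$ bound. The second integral in \refE{gradpExp} is easy: \refE{D2KKBound} of \refP{KStarBounds} gives the uniform-in-$x$ estimate $\norm{\grad_y\grad_y[(1-a(x-\cdot))K^\perp(x-\cdot)]}_{L^1} \le C$, so this integral is bounded by $C\norm{u_n(t)}_{L^\iny}^2$, controlled uniformly in $n$ via \refR{uSBound} and \refE{unConvForPressureR2}. The first integral is the hard part, since $\dv\dv(u_n \otimes u_n) = \partial_i u_n^j \partial_j u_n^i$ contains $\grad u_n$ and is not bounded uniformly in $n$ on its own. My plan here is to reduce to the case $U_\iny^n \equiv 0$ via the change of variables \refE{COV}. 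By the first part of \refL{COVEquiv}, setting $\ol u_n$ and $\ol p_n$ as in \refE{COV} yields a classical solution to \refE{EClassical} with $\ol u_n = O(|x|^{-1})$, so $\ol u_n \otimes \ol u_n \in L^r$ for every $r \in (1, \iny]$. For such decaying smooth approximate Euler solutions the uniform bound $\norm{\grad \ol p_n}_{L^\iny} \le C\norm{\ol u_n}_S^2$ is precisely what is established in \cite{AKLL2014} during the construction of Serfati solutions, and $\norm{\ol u_n}_S$ is uniformly bounded by \refE{unConvForPressureR2}. Undoing \refE{COV} gives $\grad p_n(t, x) + (U_\iny^n)'(t) = \grad \ol p_n(t, x - \int_0^t U_\iny^n(s)\,ds)$, from which the desired uniform bound on $\grad p_n + (U_\iny^n)'$ follows. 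The crux of the whole proposition is thus this first-integral bound, which rests on the Serfati-type estimate from \cite{AKLL2014}, the rest being computation.
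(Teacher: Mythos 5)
Your derivation of the identity \refE{gradpExp} and of its independence from the cutoff is sound and essentially matches the paper's route (differentiate \refE{pqnR2}, split $K^\perp = aK^\perp + (1-a)K^\perp$, integrate by parts twice on the far piece, compare two cutoffs via the compactly supported difference $(a-b)K^\perp$). Your extra step of replacing $u_n \otimes u_n$ by $w_n \otimes w_n$ with $w_n = u_n - U_\iny^n$ to justify the boundary terms at infinity is a legitimate, if slightly more laborious, version of what the paper does by integrating once and then invoking the vector identity used in \refE{SerfatiIDIBP2} together with $\dv u_n = 0$. The bound on the second integral via \refE{D2KKBound} and the uniform $S$-bound on $u_n$ is also exactly the paper's argument.

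The gap is precisely where you locate the crux: the uniform $L^\iny$ bound on the first integral. You do not prove it; you attribute the estimate $\smallnorm{\grad \ol{p}_n}_{L^\iny} \le C \smallnorm{\ol{u}_n}_S^2$ to \cite{AKLL2014}, but that reference works with the velocity formulation via the Serfati identity (in which the pressure has been eliminated by taking a curl), and the present paper states explicitly that obtaining the pressure and its properties for the \cite{AKLL2014} solutions is one of its purposes --- so the estimate you want to quote is, in effect, the thing being proved here. The detour through \refE{COV} is also unnecessary. The direct argument is short: since $\dv u_n = 0$, one has $\dv \dv (u_n \otimes u_n) = \grad u_n \cdot (\grad u_n)^T$, so by \Holders inequality with $p \in (1,2)$ and $q$ conjugate,
\begin{align*}
    \abs{\int_{\R^2} a(x-y) K^\perp(x-y) \dv \dv (u_n \otimes u_n)(y) \, dy}
        \le \smallnorm{aK}_{L^p} \, \norm{\grad u_n}_{L^{2q}(\supp a(x - \cdot))}^2 .
\end{align*}
The kernel factor is finite by \refE{RearrangementBound}, and $\norm{\grad u_n}_{L^{2q}(\supp a(x-\cdot))} \le C \smallnorm{u_n}_S \le C \smallnorm{u^0}_S$ uniformly in $n$ and $x$ by \refL{Morrey} applied on the fixed compact set $\supp a(x - \cdot)$ together with \refE{unConvForPressureR2}$_1$ (and \refR{uSBound}). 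This is the point your write-up is missing: $\grad u_n$ fails to be uniformly bounded only in $L^\iny$, but it is uniformly bounded in every $L^r_{loc}$, $r < \iny$, and the local integrability of $K$ in $L^p$, $p<2$, is exactly enough to close the estimate.
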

\begin{proof}
%     The proof is in \refS{gradpBoundR2}.
Taking the gradient of $p_n$ as given in \refE{pqnR2}, we have
\begin{align*}
	\grad p_n(t, x)
		&= - V_n(t)
			- \int_{\R^2} \grad_x G(x - y) \dv(u_n \cdot \grad u_n)(t, y) \, dy,
\end{align*}
where $V_n = (U_\iny^n)'$.

For $i = 1, 2$ let $j = 2, 1$. Then since $- \grad_x G(x - y) = K^\perp(x - y)$, we can write 
\begin{align*}
	(-1)^i \prt_i p_n(x) + (-1)^i V_n^i
		=  \int_{\R^2} K^j(x - y) \dv(u_n \cdot \grad u_n)(y) \, dy.
\end{align*}
Here, we suppress the time variable to streamline notation.
Applying a cutoff and integrating by parts,
\begin{align*}
	(-1)^i &\prt_i p_n(x) + (-1)^i V_n^i \\
		&= \int_{\R^2} a(x - y) K^j(x - y) \dv(u_n \cdot \grad u_n)(y) \, dy \\
		&\qquad
			+ \int_{\R^2} (1 - a(x - y)) K^j(x - y)
			        \dv(u_n \cdot \grad u_n)(y) \, dy \\
		&= \int_{\R^2} a(x - y) K^j(x - y) \dv(u_n \cdot \grad u_n)(y) \, dy \\
		&\qquad
	 		- \int_{\R^2} (u_n \cdot \grad u_n)(y)
			\cdot \grad \brac{(1 - a(x - y)) K^j(x - y)} \, dy.
\end{align*}
Integrating as in \refE{SerfatiIDIBP2} gives
\begin{align*}
	\begin{split}
	\prt_i &p_n(x) + V_n \\
		&= (-1)^i \int_{\R^2} a(x - y) K^j(x - y)
		    \dv(u_n \cdot \grad u_n)(y) \, dy \\
		&\quad
	 		+ (-1)^i \int_{\R^2} (u_n(y) \cdot \grad_y)
			\grad_y \brac{(1 - a(x - y)) K^j(x - y)}
			\cdot u_n(y) \, dy,
	\end{split}
\end{align*}
which we can write more succinctly as \refE{gradpExp}.

Letting $q$ be \Holder conjugate to $p$ with $p$ in $(1, 2)$, we conclude, since $\dv (u_n \cdot \grad u_n) = \grad u_n \cdot (\grad u_n)^T$, that
\begin{align*} % \label{e:prtipnBound}
	\begin{split}
	\norm{\prt_i p_n + (U_\iny^n)'}_{L^\iny}
		&\le \smallnorm{a K}_{L^p}
			\norm{\grad u_n}_{L^{2q}(\supp a(x - \cdot))}^2 \\
		&
			+ \smallnorm{\grad_y \grad_y \brac{(1 - a) K^j}}_{L^1_y}
				\norm{u_n}_{L^\iny}^2.
	\end{split}
\end{align*}
But by \refL{Morrey},
$
	\norm{\grad u_n}_{L^{2q}(\supp a(x - \cdot))}
		\le C \smallnorm{u^0_n}_S
		\le C \smallnorm{u^0}_S.
$
Given the uniform bound on $u_n$ in $S$ it follows from \refEAnd{D2KKBound}{RearrangementBound}
that $\grad p_n + (U_\iny^n)'$ lies in $L^\iny([0, T] \times \R^2)$ with a bound that is independent of $n$.

It is easy to verify that the expression in \refE{gradpExp} is independent of the choice of cutoff function, $a$, by subtracting the expression for two different cutoffs then undoing the integrations by parts. (That \refE{gradpR2} is independent of the choice of cutoff function follows the same way.)
\end{proof}

% \subsection{Proof of pressure characterization in the full plane}% \label{S:ProofPressureR2}

\begin{proof}[\textbf{Proof of \ThmProp{iii}-\ThmProp{v} of \refT{CharacterizationR2}}] $\\$
Recall that the sequence $(u_n)$ has the properties in \refE{unConvForPressureR2}.
Let $p_n$ and $q_n$ be as in \refP{SmoothPressureR2}.
By \refP{gradpBoundR2}, $(q_n)$ is an equicontinuous family on $[0, T] \times \R^2$, so it follows, via Arzela-Ascoli and a simple diagonalization argument applied to an increasing sequence of compact subsets of $\R^2$, that a subsequence of $(q_n)$, which we relabel to use the same indices, converges uniformly on compact subsets, and hence as distributions, to some scalar field, $\ol{q}$. Letting $\ol{p} = - U_\iny' \cdot x + \ol{q}$, it follows that $p_n \to \ol{p}$ in $\Cal{D}'((0, T) \times \R^2)$ and also that $\ol{p}(t, 0) = 0$ for all $t$.

From $\refE{unConvForPressureR2}_{1, 2, 3}$ it follows that $\prt_t u_n \to \prt_t u$ and $u_n \cdot \grad u_n \to u \cdot \grad u$ in $\Cal{D}'((0, T) \times \R^2)$. But $\grad p_n \to \grad \ol{p}$ in $\Cal{D}'((0, T) \times \R^2)$ and by \refP{SmoothPressureR2}, $\prt_t u_n + u_n \cdot \grad u_n + \grad p_n = 0$, so $\prt_t u + u \cdot \grad u + \grad \ol{p} = 0$. Thus, $\ol{p}$ is a valid pressure field, so we can use $p = \ol{p}$.

Because $p_n \to p$ uniformly on compact subsets, \refE{pBMO} holds and the bound on $p_n + (U_\iny^n)'$ in \refP{PressureConvR2} yields \refE{pBoundR2}. That \refE{pRieszRel} holds follows from Theorem 2 item (1) of \cite{JunKato2003}.

We complete the proof by establishing that \refE{gradpR2} holds for $p$ and that $\grad p + U_\iny' \in L^\iny([0, T] \times \R^2)$.

Let $\Pi$ be the expression on the right-hand side of \refE{gradpR2}. We will show that $\grad p_n + U_n' \to \Pi + U'$ in $L^\iny([0, T] \times \R^2)$ and hence $\grad p_n \to \Pi$ in $\Cal{D}'((0, T) \times \R^2)$. But we already know that $p_n \to p$ in $\Cal{D}'((0, T) \times \R^2)$ so $\grad p_n \to \grad p$ in $\Cal{D}'((0, T) \times \R^2)$. We can then conclude that $\Pi = \grad p$, that \refE{gradpR2} holds, and that $\grad p + U' \in L^\iny([0, T] \times \R^2)$.

We now show that $\grad p_n + U_n' \to \Pi + U'$ in $L^\iny([0, T] \times \R^2)$.

We write \refE{gradpExp} with $a$ replaced by $a_\eps$, where $\eps$ is to be determined:
\begin{align*}
		\grad p_n&(t, x)
			= -(U_\iny^n)'(t)
				+ \int_{\R^2} a_\eps(x - y) K^\perp(x - y)
				    \dv \dv (u_n \otimes u_n)(t, y) \, dy \\
			&
	 			+ \int_{\R^2} (u_n \otimes u_n)(t, y) \cdot
					\grad_y \grad_y \brac{(1 - a_\eps(x - y))
					    K^\perp(x - y)}
					\, dy \\
		&=: -(U_\iny^n)'(t) + I_1^n(\eps) + I_2^n(\eps).
\end{align*}
The value of $\grad p_n$ is independent of our choice of $\eps$, since, by \refP{gradpBoundR2}, it is independent of the cutoff function $a_\eps$.
Let  $I_1(\eps)$, $I_2(\eps)$ be the corresponding integrals on the right-hand side of \refE{gradpR2}.

Let $\delta > 0$, fix $p$ in $(1, 2)$, and let $q$ be {\Holder} conjugate to $p$. By \refL{Morrey},
\begin{align*}
	\norm{\grad u}_{L^{2q}(\supp a_\eps(x - \cdot))}
		\le C \eps^{\frac{1}{q}}
		% \frac{q^2}{2q - 1}
		\smallnorm{u^0}_S
		\le C \eps^{\frac{1}{q}}.
\end{align*}
Because $\dv \dv (u \otimes u) = \grad u \cdot (\grad u)^T$, this bound gives
\begin{align*}
	\norm{\dv \dv (u \otimes u)}_{L^q(\supp a_\eps(x - \cdot))}
		\le C \eps^{\frac{2}{q}}.
		% \frac{q^4}{(2q - 1)^2}
		% \smallnorm{u^0}_S^2.
\end{align*}
Since $\abs{K(x)} = C \abs{x}^{-1}$, \Holders inequality gives
\begin{align*}
    \norm{I_1(\eps)}_{L^\iny}
        \le C \eps^{\frac{2}{p} - 1 + \frac{2}{q}}
        = C \eps
\end{align*}
and, similarly, $\norm{I_1^n(\eps)}_{L^\iny} \le C \eps$ uniformly for all $n$.  Choose $\eps = \delta/(3 C)$ so that $C \eps < \delta/3$. Because $u_n \to u$ uniformly on compact subsets of $([0, T] \times \R^2)$, there exists $N > 0$ such that $n > N \implies \norm{I_2(\eps) - I_2^n(\eps)}_{L^\iny} < \delta/3$. (We also use the uniform boundedness of $(u_n)$ to control the tails of the integrals in $I_2(\eps)$, $I_2^n(\eps)$.) Since the value of $\grad p_n$ is independent of $\eps$, this shows that for all $n > N$,
\begin{align*}
	&\norm{\grad p_n + (U_\iny^n)' - \Pi - U'}_{L^\iny} \\
	    &\qquad
		\le
		 \norm{I_1(\eps)}_{L^\iny} + \norm{I_1^n(\eps)}_{L^\iny}
			+ \norm{I_2^n(\eps) - I_2(\eps)}_{L^\iny}
		< \delta.
\end{align*}
These bounds are uniform in time and in space; hence, $\grad p_n + (U_\iny^n)' \to \Pi + (U_\iny)'$ in $L^\iny([0, T] \times \R^2)$. Thus, $\grad p_n \to \Pi$ in $\Cal{D}'((0, T) \times \R^2)$, since $(U_\iny^n)' \to U'$ in $\Cal{D}'((0, T))$.

We now have that \refEThrough{gradpR2}{pBoundR2} hold, $\grad p + U' \in L^\iny([0, T] \times \R^2)$, and $\prt_t u + u \cdot \grad u + \grad p = 0$, which completes the proof.
% If $q$ is another pressure field then $\grad (p - q) = 0$, so $p - q = f(t)$ for some function, $f$, of time. Requiring that $p$ vanish at the origin insures that $f \equiv 0$, giving uniqueness of the pressure.
\end{proof}

\begin{remark}
The log-Lipschitz MOC that we obtained in \refP{PressureConvR2} is a side effect of the manner of proof: it is not as strong as the Lipschitz MOC we obtain in \refP{gradpBoundR2}, though that proposition does not establish decay of $p_n$.
\end{remark}

\begin{lemma}\label{L:graduDecayR2}
    For any $n$ there exists a constant, $C > 0$, such that
    \begin{align*} % \label{e:graduDecayR2}
        \abs{u_n(\cdot, x) - U_\iny(\cdot)}_{L^\iny([0, T])}
            &\le \frac{C}{(1 + \abs{x})}, \\
        \abs{\grad u_n(\cdot, x)}_{L^\iny([0, T])}
            &\le \frac{C}{(1 + \abs{x})^2}.
    \end{align*}
\end{lemma}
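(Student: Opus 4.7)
The plan is to exploit the classical structure of the approximate solutions. By the construction in \cite{AKLL2014}, for each fixed $n$, $u_n$ is a smooth classical solution built from mollified initial data with smooth, compactly supported vorticity $\omega_n^0$. Writing $v_n := u_n - U_\iny^n$, we have $\omega(v_n) = \omega_n := \omega(u_n)$, $\dv v_n = 0$, and $v_n(t, x) \to 0$ as $\abs{x} \to \iny$ by $\refE{unConvForPressureR2}_4$. Hence the standard Biot-Savart law applies:
\begin{align*}
    v_n(t, x) = \int_{\R^2} K(x - y) \, \omega_n(t, y) \, dy.
\end{align*}

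Since $u_n$ is uniformly bounded in $L^\iny([0,T] \times \R^2)$, its classical flow map $X_n$ satisfies $\abs{X_n(t,x) - x} \le T \norm{u_n}_{L^\iny}$. Because vorticity is transported along the (area-preserving) flow, $\supp \omega_n(t) \subseteq B(0, R_n)$ for some $R_n < \iny$ depending only on $n$, $T$, and $\supp \omega_n^0$, and $\norm{\omega_n(t)}_{L^1} = \smallnorm{\omega_n^0}_{L^1}$ for all $t \in [0,T]$. For $\abs{x} > 2 R_n$, differentiation under the integral is legitimate (since $\grad K$ is smooth away from $0$) and any $y \in \supp \omega_n(t)$ satisfies $\abs{x-y} \ge \abs{x}/2$, so the pointwise bounds $\abs{K(z)} \le C \abs{z}^{-1}$ and $\abs{\grad K(z)} \le C \abs{z}^{-2}$ yield
\begin{align*}
    \abs{v_n(t, x)} \le \frac{C \smallnorm{\omega_n^0}_{L^1}}{\abs{x}}, \qquad \abs{\grad u_n(t, x)} \le \frac{C \smallnorm{\omega_n^0}_{L^1}}{\abs{x}^2},
\end{align*}
uniformly in $t \in [0,T]$.

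For $\abs{x} \le 2 R_n$, I would simply use boundedness: $v_n$ is uniformly bounded on $[0,T] \times \R^2$ via the uniform bound on $u_n$ in $S$ from $\refE{unConvForPressureR2}_1$, and $\grad u_n$ is bounded there because $u_n$ is a smooth classical solution whose vorticity is smooth and compactly supported. Taking the maximum of the two regimes and enlarging the constant to absorb $R_n$ and to convert $1/\abs{x}^k$ into $1/(1+\abs{x})^k$ produces the claimed bounds. No step presents a real obstacle; the only point worth verifying is that the construction in \cite{AKLL2014} produces smooth, compactly supported $\omega_n^0$, which it does via standard mollification of $\omega^0$.
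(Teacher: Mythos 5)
Your proposal is correct and follows essentially the same route as the paper: write $u_n - U_\iny^n = K * \omega_n$, use the compact support (preserved under the flow on $[0,T]$) and the pointwise decay of $K$ and $\grad K$ for $\abs{x} > 2R_n$, and use smoothness/boundedness on the remaining ball. Your explicit justification that $\supp \omega_n(t)$ stays in a fixed ball via the flow-map displacement bound is a detail the paper leaves implicit, but the argument is the same.
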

\begin{proof}
    Because $\omega_n$ is compactly supported there is some $R > 0$ such
    that $\supp \omega_n \subseteq B_R(0)$. Let $\abs{x} > 2R$. Then 
    because $u_n$ is smooth, we have
    \begin{align*}
          \grad u_n(x)
              &= (\grad K) * \omega_n(x)
              = \int_{B_R(0)} \grad_x K(x - y) \omega_n(y) \, dy,
    \end{align*}
    noting that the compact support of $\omega$ eliminates the singularity
    in $\grad_x K(x - y)$.
    But  for all $y \in B_R(0)$,
    \begin{align*}
        \abs{\grad_x K(x - y)}
            &\le \frac{1}{2 \pi (\abs{x} - R)^2}
            \le \frac{1}{2 \pi (\abs{x}/2)^2}
            \le \frac{2}{\pi \abs{x}^2}
    \end{align*}
    
    so
    \begin{align*}
          \abs{\grad u_n(x)}
              &\le \frac{2}{\pi \abs{x}^2} \int_{B_R(0)}
              \abs{\omega_n(y)} \, dy
              = \frac{2}{\pi \abs{x}^2} \norm{\omega_n}_{L^1}.
    \end{align*}
    Since $u_n$ is smooth, $\grad u_n$ is bounded on $B_{2R}(0)$.
    The bound on $\grad u_n$ follows. The bound on $u_n$
    is obtained similarly.
\end{proof}

%
% Section
%
\section{The Poisson problem in the full plane}\label{S:PoissonFullPlane}

\noindent In \refS{PressureR2}, we needed to solve the Poisson problem to obtain the pressure in the full plane, our interest being in obtaining the asymptotic behavior of the pressure at infinity. Fortunately, a tool, \refL{ThreeRussians}, for obtaining the MOC of the pressure expressed in terms of a Riesz transform exists in the literature, and we can use it to obtain this asymptotic behavior. As applied in \refS{PressureR2}, we do this for the sequence of approximating solutions, which have sufficient decay at infinity so that the Riesz transforms exist in the classical sense of principal values of singular integrals.

\begin{lemma}\label{L:ThreeRussians}
	Let $R$ be any Riesz transform in $\R^2$. Suppose that $h$
	lying in $L^p(\R^2)$ for some $p$ in $[1, \iny)$ has a concave
	Dini MOC, $\mu$, as in \refD{Dini}. Then
	$R h$ has a MOC, $\nu$, given by
	\begin{align}\label{e:nuMOC}
		\nu(r) = C \pr{S_\mu(r) + r \int_r^\iny \frac{\mu(s)}{s^2} \, ds}
	\end{align}
	for some absolute constant, $C$. (Note that this MOC holds for all
	$r > 0$.)
\end{lemma}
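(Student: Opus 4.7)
My plan is to bound $|(Rh)(x)-(Rh)(x')|$ directly via a Calder\'{o}n--Zygmund type splitting, exploiting the fact that the Riesz kernel $K$ is homogeneous of degree $-2$, smooth off the origin, and has vanishing mean on circles centered at the origin. Fix $x \ne x'$ with $r = |x-x'|$, set $x_0 = (x+x')/2$, and write, as a principal value,
\begin{align*}
(Rh)(x) - (Rh)(x')
&= \int_{\R^2}\brac{K(x-y) - K(x'-y)}\brac{h(y) - h(x_0)}\,dy.
\end{align*}
The constant $h(x_0)$ may be inserted under the integral because of the spherical-mean cancellation of $K$ (together with the $L^p$ hypothesis, which lets a truncation/limit argument control the tail). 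Then I would split the domain of integration at the sphere $|y - x_0| = 2r$.

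On the near region $|y - x_0| < 2r$, I drop the difference and bound each term separately by $|K(z)| \le C|z|^{-2}$, use $|h(y) - h(x_0)| \le \mu(|y-x_0|)$, and pass to polar coordinates centered at $x$ (resp.\ $x'$) to get a bound of the form $C\int_0^{Cr}\mu(s)/s\,ds = C\,S_\mu(Cr)$. Concavity of $\mu$ yields subadditivity, hence $S_\mu(Cr) \le C' S_\mu(r)$, so the near contribution is $\le C S_\mu(r)$. On the far region $|y - x_0| \ge 2r$, the straight-segment mean value theorem applied to $K$ gives $|K(x-y)-K(x'-y)| \le Cr|y-x_0|^{-3}$; combining with $|h(y)-h(x_0)| \le \mu(|y-x_0|)$ and polar coordinates produces $Cr\int_{2r}^{\infty}\mu(s)\,s^{-2}\,ds \le Cr\int_r^\infty \mu(s)/s^2\,ds$. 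Adding the two contributions gives precisely \refE{nuMOC}.

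To conclude that $\nu$ is a genuine MOC, I would verify that $\nu(r) \to 0$ as $r \to 0^+$: the first term vanishes because $\mu$ is Dini, and for the second I integrate by parts or use $r\int_r^\infty \mu(s)/s^2\,ds \le \mu(r) + \int_r^\iny \mu'(s)\,ds$-type manipulations (available via concavity) to bound it by a multiple of $\mu(r) + S_\mu(r)$, also vanishing at $0$. Monotonicity and continuity of $\nu$ follow from monotonicity of $\mu$ and dominated convergence.

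The main obstacle is the rigorous justification of the cancellation step: $h$ lies only in $L^p \cap C_\mu$, not in any nice dense subclass a priori, so inserting $h(x_0)$ under the integral requires either (a) approximating $h$ by mollified, compactly supported data, applying the estimate to the approximants where the principal value is classical, and passing to the limit using the $L^p$-boundedness of $R$, or (b) a direct truncation of the $y$-integral to $|y - x_0| \le N$, using the mean-zero property of $K$ on annuli (which makes the $h(x_0)$ term exactly zero for each truncation), and then letting $N \to \iny$ with $h \in L^p$ controlling the tail. Once this is done, the two regional estimates above are standard and the bound \refE{nuMOC} drops out.
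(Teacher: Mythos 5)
The paper does not actually prove this lemma: it is quoted from the literature (Burch \cite{Burch1978} for bounded domains, Kiselev--Nazarov--Volberg \cite{KNV2007} for the whole plane). Your direct Calder\'on--Zygmund argument is therefore a reconstruction of the cited proof, and its architecture is the right one: insert a constant using the cancellation of the kernel, split at scale $r=|x-x'|$, use the modulus of continuity in the near field and the gradient bound $|\nabla K(w)|\le C|w|^{-3}$ in the far field. The far-field estimate and your discussion of how to justify the truncation/limit using $h\in L^p$ are fine as written.

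There is, however, a concrete flaw in the near-field step. With the reference value taken at the midpoint $x_0=(x+x')/2$, you only control $|h(y)-h(x_0)|\le\mu(|y-x_0|)$, and this stays of size roughly $\mu(r/2)>0$ as $y\to x$; meanwhile $|K(x-y)|\sim|x-y|^{-2}$ is not locally integrable in $\R^2$. Hence
\begin{align*}
\int_{|y-x_0|<2r}|K(x-y)|\,\abs{h(y)-h(x_0)}\,dy=\infty,
\end{align*}
so ``dropping the difference and bounding each term separately'' does not produce $C\,S_\mu(Cr)$: putting absolute values inside destroys exactly the cancellation that makes the principal value finite. The standard repair is to recenter the reference value at the singularity of each kernel: for the $K(x-y)$ piece write $h(y)-h(x_0)=\bigl(h(y)-h(x)\bigr)+\bigl(h(x)-h(x_0)\bigr)$ (and analogously with $x'$ for the other piece). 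The first part gives $C\int_0^{Cr}\mu(s)s^{-1}\,ds=C\,S_\mu(Cr)\le C'S_\mu(r)$ by subadditivity, as you intended; the second part is a constant of modulus at most $\mu(r/2)$ times $\mathrm{p.v.}\int_{|y-x_0|<2r}K(x-y)\,dy$, which is $O(1)$ because the integral over $|y-x|<2r$ vanishes by the spherical-mean cancellation and the symmetric difference of the two disks lies at distance comparable to $r$ from $x$ and has area $O(r^2)$. Since $S_\mu(r)\ge\int_{r/2}^r\mu(s)s^{-1}\,ds\ge\mu(r/2)\log 2$, this second contribution is also $\le C S_\mu(r)$, and the claimed modulus \refE{nuMOC} follows. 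With this one correction your argument is complete.
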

\begin{proof}
	This type of bound in dimension higher than one appears to have been
	first proven by Charles Burch in \cite{Burch1978}
	for a bounded domain (though the MOC he obtains applies only away from
	the boundary
	and $r$ must be sufficiently small).
	It is proved in the whole plane in \cite{KNV2007}.
	\Ignore{ % Ignore - what follows is true, but not so important; for one,
	         % thing, the Russians' result could be extended to drop this
	         % requirement.
	In that reference, the requirement that
	$h$ lie in $L^p(\R^2)$ is not explicitly stated. The proof, however,
	requires that the integral
	representation of $R h$ be absolutely convergent, so some such requirement
	must be imposed.
	% (It is possible to remove such a requirement, however.)
	} % End Ignore
\end{proof}

\Ignore{ % Ignore - We are only using \refL{ThreeRussians} now for the approximate sequence
\begin{cor}\label{C:ThreeRussians}
	The assumption in \refL{ThreeRussians} that $f$ lie in $L^p(\R^2)$ for some $p$ in $[1, \iny)$
	can be dropped.
\end{cor}
\begin{proof}
	Let $\varphi$ be a smooth cutoff function supported on $B_2(0)$ and identically equal
	to 1 on $B_1(0)$, and let
	$\varphi_n(\cdot) = \varphi(\cdot/n)$. Defining $f_n = \varphi_n f$, we see that
	each $f_n$ lies in $L^p(\R^2)$ for all $p$ in $[1, \iny]$ and that
	for some constant, $\al > 0$, each $f_n$ shares the same MOC, $\al \mu$.
	Moreover, the sequence, $(f_n)$ converges as distributions to $f$.
	% as well as uniformly
	% on compact subsets (since $f_n = f$ for all sufficiently large $n$ on any compact subset
	% of $\R^2$).
	
	Since $f_n \to f$ as distributions, $R f_n \to Rf$ as distributions.
	\textbf{Is this true?}
	But then by \refL{ThreeRussians}, the sequence $(R f_n)$ shares
	a common MOC, $\al \nu$. Thus, they form an equicontinuous family and so a subsequence
	converges (uniformly) on any compact subset to a function, $g$, having the same MOC,
	$\al \nu$. But by the uniqueness of distributional limits, it must be that $g = R f$ and that
	the whole sequence converges to $g$.
\end{proof}
} % End Ignore - We are only using \refL{ThreeRussians} now for the approximate sequence

The following corollary of \refL{ThreeRussians} (though not its proof) is inspired by Lemma 2 of \cite{Serfati1995B}.
\begin{lemma}\label{L:pDecay}
    Let $R$ be a Riesz transform
    and assume that $h$ is a tensor field in $LL(\R^2) \cap L^p(\R^2)$
    for some $p$ in $[1, \iny)$. Let $q = R h$.
    Then $q$ is uniformly continuous with the MOC,
    $\nu(s) = C \norm{h}_{LL} s (\log s)^2$,
    for all sufficiently small $s > 0$, and $\abs{q(x) - q(0)} \le
    C \norm{h}_{LL} \log(e +\abs{x})$, for some $C > 0$.
\end{lemma}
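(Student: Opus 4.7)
The plan is to apply \refL{ThreeRussians} with $h$ carrying the MOC $\mu = c_0 \mu_{LL}$, where $c_0 = \norm{h}_{\dot{LL}}$ and $\mu_{LL}$ is the log-Lipschitz MOC of \refE{muLL}. First I would verify the hypotheses of \refL{ThreeRussians}: on $(0,e^{-1})$ the function $\mu_{LL}$ has second derivative $-1/s < 0$, on $(e^{-1},\iny)$ it is constant, and the two pieces match in value and first derivative at $r = e^{-1}$, so $\mu_{LL}$ is concave on $[0,\iny)$; it is Dini since $S_{\mu_{LL}}(r) = r(1 - \log r)$ is finite for small $r$. The assumption $h \in L^p(\R^2)$ supplies the last hypothesis, and \refL{ThreeRussians} then furnishes an MOC
\[
  \nu(r) = C c_0 \pr{S_{\mu_{LL}}(r) + r \int_r^\iny \frac{\mu_{LL}(s)}{s^2}\, ds}
\]
for $q = Rh$, valid for all $r > 0$.

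The rest is explicit computation of the two pieces of $\nu$. On $(0,e^{-1})$ one has $\int_r^{e^{-1}} -\log(s)/s\, ds = \tfrac{1}{2}((\log r)^2 - 1)$, and $\int_{e^{-1}}^\iny e^{-1}/s^2\, ds = 1$; multiplying by $r$, the $(\log r)^2$ term dominates both $S_{\mu_{LL}}(r) = r(1-\log r)$ and the remainder, giving $\nu(r) \le C \norm{h}_{LL}\, r (\log r)^2$ for small $r$, which is the first claim.

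For the second claim, use $\abs{q(x) - q(0)} \le \nu(\abs{x})$. When $\abs{x}$ is small the right-hand side is $O(\abs{x}(\log\abs{x})^2) = O(1)$, hence is dominated by $\log(e+\abs{x}) \ge 1$. When $\abs{x} > e^{-1}$, an analogous integration on $(e^{-1},\iny)$ yields $S_{\mu_{LL}}(r) = e^{-1}(3+\log r)$ and $r\int_r^\iny e^{-1}/s^2\, ds = e^{-1}$, so $\nu(\abs{x}) \le C \norm{h}_{LL}\log\abs{x} \le C\norm{h}_{LL}\log(e+\abs{x})$. Combining the two regimes gives the stated logarithmic bound.

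I do not expect a real obstacle here: once \refL{ThreeRussians} is cited, everything reduces to the antiderivative of $-\log(s)/s$, and the distinction between $\norm{h}_{LL}$ and $\norm{h}_{\dot{LL}}$ is absorbed into the constant. The only thing worth being a bit careful about is checking concavity of $\mu_{LL}$ at the junction $r = e^{-1}$, which is immediate from the $C^1$-matching.
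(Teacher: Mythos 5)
Your proposal is correct and follows essentially the same route as the paper: cite \refL{ThreeRussians} with the log-Lipschitz modulus and compute $S_\mu$ and $r\int_r^\iny \mu(s)s^{-2}\,ds$ explicitly on the two regimes $r\le e^{-1}$ and $r>e^{-1}$, the only cosmetic difference being that you normalize by $\norm{h}_{\dot{LL}}$ where the paper uses $M=\norm{h}_{LL}$. Your explicit check of concavity and the Dini condition for $\mu_{LL}$ is a detail the paper leaves implicit, and your integrals match the paper's.
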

\begin{proof}
Referring to \refE{muLL}, since $h$ is bounded and has a log-Lipschitz MOC, we have $\abs{h(x) - h(x + y)} \le \mu(\abs{y})$, where
\begin{align*}
	\mu(r)
		= \left\{
			\begin{array}{rl}
				-M r \log r, & \text{if } \abs{r} \le e^{-1}, \\
				M e^{-1}, & \text{if } \abs{r} > e^{-1},
			\end{array}
		\right.
\end{align*}
where $M = \norm{h}_{LL}$.
Thus, when $r \le e^{-1}$,
\begin{align*}
	S_\mu(r)
		= -M \int_0^r \log s \, ds
		= M (r - r \log r).
\end{align*}
Noting that $S_\mu(e^{-1}) = M e^{-1}$, when $r > e^{-1}$, we have
\begin{align*}
	S_\mu(r)
		&= S_\mu(e^{-1}) + \int_{e^{-1}}^r \frac{M e^{-1}}{s} \, ds
		= M e^{-1} + M e^{-1} (\log r - \log e^{-1}).
\end{align*}

Further, when $r > e^{-1}$,
\begin{align*}
	r \int_r^\iny \frac{\mu(s)}{s^2} \, ds
		&= r \int_r^\iny \frac{M e^{-1} ds}{s^2}
		= M e^{-1}\frac{r}{r}
		= M e^{-1}.
\end{align*}
% \Ignore{ % We don't need to deal with small $r$ for $\nu$, though it gives a nearly log-Lipschitz MOC
and when $r < e^{-1}$,
\begin{align*}
	r \int_r^\iny &\frac{\mu(s)}{s^2} \, ds
		= - r \int_r^{e^{-1}} \frac{M \log s}{s} \, ds
			+ r \int_{e^{-1}}^\iny \frac{M ds}{s^2} \\
		&= -M r \frac{1}{2} \brac{(\log s)^2}_r^{e^{-1}} + M r e^{-1}
		= \frac{M}{2} r \brac{1 + (\log r)^2} + M r e^{-1}.
\end{align*}
% } % End Ignore

Applying \refL{ThreeRussians}, then, for $r > e^{-1}$,
\begin{align}\label{e:nuLargeArg}
	\nu(r)
		= C M \pr{\log r + 1}
\end{align}
while for $r \le e^{-1}$,
\begin{align*}
	\nu(r)
		&= C M r \brac{- \log r + (\log r)^2},
\end{align*}
which gives the MOC for $q$ for small argument.
\end{proof}

\begin{remark}
    As we can see from the proof of \refL{pDecay}, the logarithmic
    bound on the growth of $q$ at infinity comes from the
    $L^\iny$-norm of $h$ plus $S_\mu(e^{-1})$. Thus, such
    a logarithmic bound would hold for \textit{any} $h$ in
    $L^\iny(\R^2)$ as long as it also has \textit{some} Dini
    MOC. Note, however,
    that $h \in L^\iny$, which would imply $q \in BMO$, is not
    by itself sufficient to obtain such a bound.
\end{remark}

%
% Section
%
\section{Afterword}\label{S:Afterword}

\noindent We have characterized the behavior at infinity of 2D bounded solutions to the Euler equations in the full plane, including properties of the velocity and pressure, and have proved their existence and uniqueness. In the subsections that follow, we make three further observations: The first concerns a vorticity formulation of weak solutions; the second concerns the relation between our results and those of Taniuchi in \cite{Taniuchi2004} and Taniuchi, Tashiro, and Yoneda in \cite{TaniuchiEtAl2010}; the third concerns an extension of these results to the exterior of a single obstacle.

\subsection{Vorticity formulation of weak Solutions}\label{S:VorticityFormulation} The definition of a weak solution to the 2D Euler equations for initial velocity in $S$ given in \cite{AKLL2014} required that the solutions satisfy the Serfati identity, \refE{SerfatiId} (with $U_\iny \equiv 0$). This requirement was to insure uniqueness of solutions.

The Serfati identity encodes information both about the membership of the velocity field in $S$ and the PDE (the Euler equations) that the velocity field satisfies. The renormalized Biot-Savart law of \refE{RenBSLaw} only encodes the membership of the velocity field in a subspace of $S$ for which the renormalized Biot-Savart law holds without taking a subsequence. It follows from \refTAnd{CharacterizationR2}{ExistenceR2} that we can use the renormalized Biot-Savart law---specifying the value of $U_\iny$---instead of the Serfati identity as our selection criterion to insure uniqueness. This is more satisfying, as it reduces redundancy in the definition of a weak solution, and gives us the vorticity formulation of a weak solution in \refD{ESolEandU}, suitable for insuring both existence and uniqueness. Moreover, this definition is quite close to the usual vorticity formulation of solutions to the 2D Euler equations.

\begin{definition}[Vorticity formulation of a weak solution in $\R^2$]\label{D:ESolEandU}
    Fix $T > 0$ and $U_\iny \in C([0, T])$ with $U_\iny(0) = 0$.
    Let $a$ be any radial cutoff function as in \refD{RadialCutoff}.
    Let $u^0 \in S(\R^2)$ with vorticity
    $\omega^0 = \omega(u^0)$.
    We say that
    % $u \in L^\iny(0, T; S) \cap C([0, T] \times \R^2)$
    $\omega \in L^\iny([0, T] \times \R^2)$
    is a \textit{bounded solution} to the Euler equations
    without forcing having initial velocity $u^0$ and weak velocity at
    infinity $U_\iny$ relative to $u_0$
    if $u(0) = u^0$ and the following hold:
    \begin{enumerate}            
        \item
            $\prt_t \omega + K[\omega] \cdot \grad \omega = 0$
            as distributions on $(0, T) \times \R^2$,
            where $\omega = \omega(u)$;
            
        \item
            the velocity is recovered from the vorticity via 
            \begin{align}\label{e:VortFormRecovery}
                K[\omega](t) &=  u^0  + U_\iny(t)
                + \lim_{R \to \iny} (a_R K) * (\omega(t) - \omega^0),
            \end{align}
            and $K[\omega] \in C([0, T] \times \R^2)$;

        \item
            the vorticity is transported by the flow map for $K[\omega]$.
            
    \end{enumerate}
\end{definition}

A few comments on this definition:

\begin{enumerate}[(a)]
    \item
        By \refT{CharacterizationR2}, \refD{ESolEandU} does not depend
        upon the particular choice of the radial cutoff function $a$.
        
    \item
        Even for a vorticity formulation, we must specify not just the
        bounded initial vorticity but the initial velocity, insisting
        that it too be bounded. This is because there are vorticity
        fields, $\omega^0$, having no corresponding
        bounded velocity, $u^0$, a simple example being
        $\omega^0 \equiv 1$. Moreover, even if a $u^0$ exists it is
        unique only up to an additive constant.
        
    \item
        From $K[\omega] \in C([0, T] \times \R^2)$, the existence
        and uniqueness of a classical flow map follows as in
        \refR{FlowMap}.
        
    \item
        The assumption that the velocity, $K[\omega]$, lie in
        $C([0, T] \times \R^2)$ seems to be necessary, as it does not
        follow from \refE{VortFormRecovery}.
        % , even with the assumption that
        % the vorticity is transported by the flow map.
\end{enumerate}

\subsection{Relation to work of Taniuchi, Tashiro, and Yoneda}\label{S:TTYWork}
To construct his solutions to the Euler equations in \cite{Taniuchi2004}, Taniuchi uses a sequence of approximating smooth solutions coming from \cite{Serfati1995B}. In particular, he uses \refE{gradpR2} (for $U_\iny \equiv 0$) to obtain the formula,
\begin{align*}
	u(t_2)
		= u(t_1) - \int_{t_1}^{t_2} \LP (u \cdot \grad u)(t) \, dt,
\end{align*}
where $\LP$ is \textit{formally} the Leray projector, defined in terms of Riesz transforms. This formula plays somewhat the same function that \refE{SerfatiId} plays in \cite{AKLL2014}, and is central in Taniuichi's proof of  existence of bounded (in fact, slightly unbounded) solutions. He does not, however, show that the vorticity is transported by the flow map.

Interestingly, the transport of the vorticity by the flow map is not needed to prove uniqueness of bounded solutions in the full plane. They use the techniques of paradifferential calculus, along the lines of that of Vishik in \cite{VishikBesov}, to obtain continuity with respect to initial data and hence uniqueness.

We show in \refT{ExistenceR2} that, given any $u^0 \in S$ for the full plane, there exists a bounded solution as in \refD{ESol} for which, when $U_\iny \equiv 0$, uniqueness criterion (c) holds. It is shown in \cite{TaniuchiEtAl2010} that such solutions are unique (this result is what our proof of uniqueness criterion (c) was based on). Therefore, the solutions constructed by Taniuchi in \cite{Taniuchi2004}, or at least the subclass of them with bounded vorticity, do, in fact, have their velocity transported by the flow, and so are equivalent to those constructed in \cite{AKLL2014}.

\subsection{Relation to the work of Jun Kato}\label{S:JunKato}

\noindent In \cite{JunKato2003}, Jun Kato studies solutions to the Navier-Stokes equations in all of $\R^n$, $n \ge 2$ when the initial velocity is bounded. We restrict our comments here to the $n = 2$ case, where existence of solutions globally in time holds with\begin{align}\label{e:pRiesz}
    u \in L^\iny([0, T] \times \R^2)
        \text{ and }
    p
        = R_i R_j u^i u^j,
\end{align}
where $R_j = (-\Delta)^{\frac{1}{2}} \prt_j$ is a Riesz transform (\cite{CannonKnightly1970, Knightly1972, Cannone1995, GKM1999, GMS2001}).
Uniqueness was known to hold under the condition that \refE{pRiesz} holds. The uniqueness condition was weakened somewhat in \cite{GKKM2001}, then in \cite{JunKato2003} it was weakened quite a bit further to
\begin{align}\label{e:pRiesz2}
    u \in L^\iny([0, T] \times \R^2)
        \text{ and }
    p
        \in L^1_{loc}([0, T); BMO),
\end{align}
thereby dropping the requirement that the pressure satisfy any particular functional relation.

Kato employs in \cite{JunKato2003} a sequence of approximate Riesz operators, $R^\eps$, converging to the Riesz transform $R$ of \refS{PressureR2} as $\eps \to 0^+$, by cutting off the Green's function for the Laplacian. This same approach could have been taken here, since \refL{ThreeRussians}, which as at the heart of the proof of \refE{pBoundR2}, holds uniformly when using $R^\eps$ in place of $R$. Instead of approximating the Riesz transform used to obtain the pressure, we, in \refS{PressureR2}, approximated the pressure itself. This has the virtue that it can, with substantial additional technical difficulties, be adapted to the exterior of a single obstacle. (We make a few comments on this in \refS{ExteriorDomain}.)

A question that remains open is whether the condition in \refE{pRieszRel} can be dropped as long as \refE{pBMO} holds: this is what is done in \cite{JunKato2003} for the Navier-Stokes equations. What makes this difficult to prove for the Euler equations is that the Leray projector is not bounded in $L^\iny$. For the Navier-Stokes equations, Kato gets around this by taking advantage of properties of the heat kernel. The key estimate, in Lemma 1 of \cite{JunKato2003}, however, blows up like $(\nu t)^{-1/2}$, which prevents the estimate from being adapted for use with the Euler equations.

Finally, we note that the characterization at infinity in \refE{CharacterizationR2} can be extended to solutions to the Navier-Stokes equations with bounded initial velocity \textit{and} vorticity. This is because the analog of the Serfati identity, \refE{SerfatiId}, for the cutoff function, $a_R$, includes only the one additional term,
\begin{align*}
    \nu \int_0^t \Delta_y
			       \pr{(1 - a_R) K^j} * \omega(s) \, ds,
\end{align*}
which vanishes as $R \to \iny$. This allows the argument in the proof of \refP{RenormalizationLemma} to be made without change.

\newcommand{\JJ}{J_\Omega}

\subsection{Exterior to a single obstacle}\label{S:ExteriorDomain}

\noindent It is possible to obtain similar results for the exterior, $\Omega$, to a single, simply connected obstacle having a $C^{2, \al}$ boundary, $\al \in (0, 1)$. We give here a brief account of those results and comment on how they are obtained.

The main result, in analog with \refE{CharacterizationR2}, is that
\begin{align}\label{e:CharacterizationExt}
	\begin{split}
			u(t, x) - u^0(x)
				&= U(t, x)
					+ 
				\lim_{R \to \iny}\int_\Omega a_R(x - y)
				    \JJ(x, y) \omega(y)
				    \, dy, \\
		\grad p(t, x) &= - \prt_t U(t, x) + O(1), \\
		p(t, x) &= - \prt_t \zeta(t, x) + O(\log \abs{x}).
	\end{split}
\end{align}
Here, $\JJ$ is the hydrodynamic Biot-Savart kernel (see \cite{AKLL2014}) and $U$ is a bounded harmonic vector field (that is, divergence-free, curl-free, and tangential to the boundary), which is defined uniquely by its value, $U_\iny$, at infinity and its circulation, $\gamma$, about the boundary. The function, $\gamma$, is the difference in the circulation of $u^0$ from that of $u(t)$. The vector field, $\zeta$, and so the pressure, are multi-valued (unless $\gamma \equiv 0$) with $\grad \zeta = U$. For physically meaningful solutions, we would require that $\gamma \equiv 0$, so that the pressure is single-valued and the circulation is unchanging.

The presence of an obstacle prevents us from transforming the vector field $U$ (or even its value, $U_\iny$, at infinity) away by making a change of reference frame, as we are able to do for the full plane. (Unless we wish to transform the problem to that of a moving obstacle.)

The proof of \refE{CharacterizationExt} parallels that given here for \refE{CharacterizationR2} but is substantially more technical and lengthy for the following reasons:
\begin{enumerate}
    \item
        Formulae involving convolutions in the full plane are
        replaced by integrals over $\Omega$. \refL{Conv}, which
        allowed us to move derivatives back and forth in
        convolutions, must be replaced by integrating by parts,
        which introduces boundary terms that must be controlled.
        This complicates considerably the adaptation of the
        argument in \refS{BSLawR2} to an exterior domain.
        
    \item
        The presence of boundary terms also makes the analog
        of \refL{gammaLemma} for an exterior domain impossible
        to obtain. Instead, we need to strengthen the notion
        of a solution to require that
        \begin{align*}
		    \int_\Omega (\varphi(t) \omega(t, \cdot)
		        - \varphi(0) &\omega^0(\cdot))
			    - \int_0^t \int_\Omega \prt_t \varphi \, \omega \\
			    &- \int_0^t \int_\Omega (\grad \varphi
			        \cdot u) \omega
			    = 0
	    \end{align*}
        for all $\varphi \in C^\iny_C([0, T] \times
	    \ol{\Omega})$, $t \in [0, T]$.
	    
    \item
        The equivalent of \refL{pDecay} is much harder to obtain,
        and involves introducing a Neumann function (Green's
        function of the second kind) to solve for the pressure
        in terms of the velocity. This in turn requires the
        careful control of boundary integrals that do not appear
        for the full plane.
        
    \item
        The estimates on the hydrodynamic Biot-Savart kernel,
        $\JJ$, corresponding to \refP{KStarBounds}
        are considerably harder to obtain than those for
        the Biot-Savart kernel, $K$, for the full plane.
        Fortunately, the needed  estimates were obtained in
        \cite{AKLL2014}.
\end{enumerate}

%
% Section
%
\appendix
\section{Some lemmas}\label{A:SomeLemmas}

\begin{lemma}\label{L:Morrey}
	Suppose $u \in S$. Then $u \in LL$ with $\norm{u}_{LL} \le C \norm{u}_S$.
	Moreover, for any bounded domain, $D \subseteq \R^2$,
	\begin{align*}
		\norm{\grad u}_{L^p(D)}
			\le C \abs{D}^{1/p} \frac{p^2}{p - 1} \norm{u}_S.
	\end{align*}
\end{lemma}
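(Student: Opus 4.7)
The plan is to decompose $u$ locally as a Biot--Savart term plus a harmonic remainder, then apply classical kernel estimates to the former and standard interior estimates for harmonic functions to the latter. Fix a ball $B = B_r(z) \subset \R^2$, let $\omega_B$ denote the restriction of $\omega$ to $B$ (extended by zero outside $B$), set $v := K * \omega_B$, and let $w := u - v$. Since $\omega_B \in L^1 \cap L^\iny$, the convolution $v$ is well-defined and bounded; moreover $\dv v = 0$ on $\R^2$ and $\curl v = \omega_B = \omega$ on $B$, so $\dv w = 0$ and $\curl w = 0$ on $B$. Hence the identity $\Delta w = \grad \dv w + \grad^\perp \curl w$ shows that each component of $w$ is harmonic on $B$.

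For the log-Lipschitz bound, it suffices to consider $h := \abs{x - y} \le e^{-1}$, the larger case being immediate from the $L^\iny$ bound on $u$. Take $r = 1$, with $z$ chosen so that $x, y \in B_{1/2}(z) \subset B$. The classical log-Lipschitz estimate for the planar Biot--Savart kernel gives
\[
    \abs{v(x) - v(y)} \le C \bigl(\norm{\omega_B}_{L^1} + \norm{\omega_B}_{L^\iny}\bigr) \mu_{LL}(h) \le C \norm{u}_S \, \mu_{LL}(h),
\]
obtained by splitting $\int \bigl(K(x - \cdot) - K(y - \cdot)\bigr) \omega_B$ into the near region $\{\abs{\cdot - x} < 2h\}$ (bounded via $\abs{K} \le C/\abs{\cdot}$) and its complement (bounded via $\abs{\grad K} \le C/\abs{\cdot}^2$). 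For the harmonic remainder, the interior gradient estimate for harmonic functions gives $\norm{\grad w}_{L^\iny(B_{1/2}(z))} \le C \norm{w}_{L^\iny(B)} \le C (\norm{u}_{L^\iny} + \norm{v}_{L^\iny(B)}) \le C \norm{u}_S$, so $w$ is Lipschitz on $B_{1/2}(z)$ and hence log-Lipschitz. Combining yields $u \in LL$ with $\norm{u}_{LL} \le C \norm{u}_S$.

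For the $L^p$ estimate, enclose $D$ in a ball $B$ (covering $D$ by balls of bounded overlap when $D$ is geometrically awkward) and apply the same decomposition. Each component of $\grad K$ is a Calder\'on--Zygmund kernel, so the operator $f \mapsto \grad K * f$ is bounded on $L^p(\R^2)$ with operator norm at most $C p^2 / (p - 1)$; consequently,
\[
    \norm{\grad v}_{L^p(\R^2)} \le C \frac{p^2}{p - 1} \norm{\omega_B}_{L^p} \le C \frac{p^2}{p - 1} \abs{B}^{1/p} \norm{\omega}_{L^\iny}.
\]
For the harmonic remainder, the interior bound gives $\norm{\grad w}_{L^\iny(\tfrac{1}{2} B)} \le C \norm{u}_S$, so $\norm{\grad w}_{L^p(D)} \le C \abs{D}^{1/p} \norm{u}_S$. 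Summing the two contributions and absorbing geometric factors into $\abs{D}^{1/p}$ gives the claimed inequality.

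The main obstacle is the bookkeeping of constants: verifying that the Calder\'on--Zygmund bound on $\grad K$ really has the sharp $p^2/(p-1)$ dependence, and arranging the ball choice (or covering of $D$) so that $\abs{B}^{1/p}$ and the harmonic remainder are absorbed into a multiple of $\abs{D}^{1/p}$. Both points are classical, but the latter requires some care when $D$ is thin, in which case a bounded-overlap covering by unit balls must be combined with the $\ell^p$ triangle inequality applied to $p$-th powers before extracting the $p$-th root.
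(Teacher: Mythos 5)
The paper offers no proof of this lemma at all---it simply cites \cite{AKLL2014}---so your argument has to stand on its own. The log-Lipschitz half does: localizing the vorticity to a unit ball $B$ containing $x$ and $y$, writing $u = K*\omega_B + w$ with $w$ harmonic on $B$, applying the classical near/far splitting to $K*\omega_B$ and the interior gradient estimate to $w$ on the concentric half-ball is the standard route, and every step you indicate is correct.

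The $L^p$ half has a genuine gap, concentrated in the phrase ``absorbing geometric factors into $\abs{D}^{1/p}$.'' Your Calder\'on--Zygmund step bounds $\norm{\grad v}_{L^p(\R^2)}$ by $C\frac{p^2}{p-1}\abs{B}^{1/p}\norm{\omega}_{L^\iny}$ for the \emph{enclosing ball} $B$, not by $\abs{D}^{1/p}$, and the proposed repair by a bounded-overlap cover of $D$ by unit balls does not restore $\abs{D}^{1/p}$: on each unit ball $B_i$ the singular-integral contribution to $\norm{\grad u}_{L^p(D\cap B_i)}$ is of size $C\frac{p^2}{p-1}\norm{u}_S$ with no small factor (you have no control of $\norm{\grad v_i}_{L^p(D\cap B_i)}$ beyond $\norm{\grad v_i}_{L^p(\R^2)}$), so summing $p$-th powers produces $N^{1/p}$, with $N$ the number of balls meeting $D$; for a thin or scattered $D$ one has $N \gg \abs{D}$. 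Worse, no rearrangement of the argument can deliver the inequality as literally stated, with a constant independent of $D$, uniformly over small domains: for the Bahouri--Chemin velocity (the Biot--Savart image of the odd--odd extension of the indicator of a square), which lies in $S$, one has $\abs{\grad u(x)} \sim \log(1/\abs{x})$ near the origin, hence $\norm{\grad u}_{L^p(B_\eps(0))} \ge c\,\abs{B_\eps}^{1/p}\log(1/\eps)$, which outruns $C\abs{B_\eps}^{1/p}\frac{p^2}{p-1}$ as $\eps \to 0$ for fixed $p$. What your decomposition genuinely proves is the estimate with $\abs{D}^{1/p}$ replaced by $\abs{B}^{1/p}$ for any ball $B \supseteq D$ of radius at least one (the lower bound on the radius is also needed so that the interior estimate $\norm{\grad w}_{L^\iny(\tfrac{1}{2}B)} \le C r^{-1}\norm{w}_{L^\iny(B)}$ does not degenerate); that version covers every use of the lemma in the paper except the application to $D = \supp a_\eps(x-\cdot)$ in the pressure estimates, where the lost logarithm is harmless because the resulting bound $C\eps(\log(1/\eps))^2$ on $I_1(\eps)$ still vanishes as $\eps \to 0$. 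You should therefore either prove and use the weaker, correct statement or restrict the class of admissible domains $D$.
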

\begin{proof}
	See \cite{AKLL2014}.
\end{proof}

Let $\Cal{S}' = \Cal{S}'(\R^2)$ be the space of tempered distributions and $\Cal{E}' = \Cal{E}'(\R^2)$ be the subspace of compactly supported tempered distributions. We make frequent use of the following classical result:

\begin{lemma}\label{L:Conv}
	Suppose that $f \in \Cal{E}'$ and $g \in \Cal{S}'$. Then $f * g = g * f$ lies in $\Cal{S}'$ and
	\begin{align*}
		D^\al(f * g) = D^\al f * g = f * D^\al g
	\end{align*}
	for all multi-indices, $\al$.
\end{lemma}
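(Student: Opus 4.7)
My plan is to define $f * g$ via Fourier transforms and to read off all three conclusions from the fact that multiplication by a smooth function whose derivatives grow at most polynomially is a continuous operation on $\Cal{S}'$. Since $f \in \Cal{E}'$ has compact support, the Paley-Wiener-Schwartz theorem guarantees that $\FTF f$ is $C^\iny$ on $\R^2$ with every derivative growing at most polynomially, so $\FTF f$ is exactly such a multiplier. I would therefore set $f * g := \FTR\pr{(\FTF f)(\FTF g)}$, which automatically lies in $\Cal{S}'$. A brief density check shows this coincides with the classical convolution whenever the latter makes sense, for example when $g$ is also Schwartz or when $f$ and $g$ are both integrable.

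Commutativity $f * g = g * f$ is then immediate, since $(\FTF f)(\FTF g) = (\FTF g)(\FTF f)$ (and each factor can legitimately play the role of the smooth multiplier, since $f, g \in \Cal{E}'$ would both give Paley-Wiener functions, while in the mixed case the identity can be checked on test functions first and then extended). For the derivative identity I would apply $\FTF(D^\al h)(\xi) = (i\xi)^\al \FTF h(\xi)$ in turn to $h = f * g$, $h = f$, and $h = g$; the monomial $(i\xi)^\al$ may be absorbed into either factor of $(\FTF f)(\FTF g)$, and since $D^\al f$ is still in $\Cal{E}'$ and $D^\al g$ still in $\Cal{S}'$, inverting the Fourier transform yields $D^\al(f * g) = (D^\al f) * g = f * (D^\al g)$ in $\Cal{S}'$.

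The only nontrivial technical input is the Paley-Wiener-Schwartz multiplier estimate, which is standard textbook material. An alternative, Fourier-free route is to define the convolution directly by duality, setting $\smallinnp{f * g, \varphi} := \smallinnp{g, \tilde{f} * \varphi}$ for $\varphi \in \Cal{S}$, where $\tilde{f}(x) = f(-x)$. The main obstacle along that route is verifying $\tilde{f} * \varphi \in \Cal{S}$ together with the continuity of $\varphi \mapsto \tilde{f} * \varphi$ on $\Cal{S}$: this follows from the structure theorem representing $f \in \Cal{E}'$ as a finite sum of derivatives of compactly supported continuous functions, reducing matters to the elementary fact that a continuous compactly supported function convolved with a Schwartz function is Schwartz, with Leibniz providing the seminorm bounds. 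Commutativity and the derivative rule then follow by transferring derivatives and reflections across the pairing $\smallinnp{\cdot,\cdot}$.
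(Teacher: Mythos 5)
Your proof is correct. For comparison: the paper does not actually prove this lemma at all --- it is stated in the appendix as ``the following classical result'' with no proof given --- so there is no argument of the author's to measure yours against; you have supplied the standard textbook justification that the paper implicitly defers to. Both of your routes (the Fourier-multiplier definition $f*g = \FTR\pr{(\FTF f)(\FTF g)}$ using Paley--Wiener--Schwartz to see that $\FTF f \in \Cal{O}_M$, and the duality definition $\smallinnp{f*g,\varphi} = \smallinnp{g,\tilde f * \varphi}$ using the structure theorem for $\Cal{E}'$) are valid and standard, and either one delivers membership in $\Cal{S}'$ and the derivative identities exactly as you describe. The one point worth stating a little more carefully is the meaning of ``$f*g = g*f$'': since only $f$ is assumed compactly supported, there is no independent prior definition of $g*f$, so the identity is really the assertion that the two factors play symmetric roles in whichever definition one adopts (in the Fourier picture, that the product of the multiplier $\FTF f$ with the distribution $\FTF g$ agrees with what one would get by checking against test functions with the roles reversed). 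You flag this parenthetically, which is enough; a fully rigorous write-up would verify it on $\varphi \in \Cal{S}$ once and invoke density.
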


The following are two integration-by-parts lemmas for low regularity solutions; the first is a standard fact, the second is Theorem I.1.2 of \cite{T2001}.
\begin{lemma}\label{L:Hm1H01}
	Let $U$ be an open subset of $\R^2$.
	If $f$ lies in $H_0^1(U)$ and $v$ lies in $(L^2(U))^2$ then $\dv v$ lies in $H^{-1}(U)$ and
	\begin{align*}
		(\dv v, f)_{H^{-1}(U), H_0^1(U)}
			= - \int_U \grad f \cdot v.
	\end{align*}
\end{lemma}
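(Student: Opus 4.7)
The plan is to treat the right-hand side as defining a bounded linear functional on $H_0^1(U)$, verify it agrees with the distributional divergence on test functions, and then extend by density.

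First I would define $T_v \colon H_0^1(U) \to \R$ by
\begin{align*}
    T_v(f) = -\int_U \grad f \cdot v.
\end{align*}
Cauchy--Schwarz together with $\norm{\grad f}_{L^2(U)} \le \norm{f}_{H_0^1(U)}$ gives $\abs{T_v(f)} \le \norm{v}_{L^2(U)} \norm{f}_{H_0^1(U)}$, so $T_v$ is a bounded linear functional on $H_0^1(U)$ and hence defines an element of $H^{-1}(U) = (H_0^1(U))^*$ with $\norm{T_v}_{H^{-1}(U)} \le \norm{v}_{L^2(U)}$.

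Next I would show $T_v = \dv v$ as distributions on $U$. Take any $\varphi \in C_c^\infty(U) \subset H_0^1(U)$. By the definition of the distributional divergence of the $L^1_{loc}$ vector field $v$,
\begin{align*}
    \dualpairing{\dv v}{\varphi}_{\Cal{D}', \Cal{D}}
        = -\dualpairing{v}{\grad \varphi}_{\Cal{D}', \Cal{D}}
        = -\int_U v \cdot \grad \varphi
        = T_v(\varphi).
\end{align*}
Thus $\dv v$, a priori only a distribution, coincides on $C_c^\infty(U)$ with the bounded functional $T_v$. Since $C_c^\infty(U)$ is dense in $H_0^1(U)$ and $T_v$ is continuous in the $H_0^1$-topology, this uniquely identifies $\dv v$ with the element $T_v \in H^{-1}(U)$.

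Putting the two steps together, $\dv v \in H^{-1}(U)$ and for every $f \in H_0^1(U)$,
\begin{align*}
    (\dv v, f)_{H^{-1}(U), H_0^1(U)}
        = T_v(f)
        = -\int_U \grad f \cdot v,
\end{align*}
as claimed. There is no real obstacle here; the only minor care is to justify the density step, which is immediate since $H_0^1(U)$ is by definition the closure of $C_c^\infty(U)$ in the $H^1$-norm, and both sides of the identity are continuous on $H_0^1(U)$.
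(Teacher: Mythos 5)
Your proof is correct and is exactly the standard argument: the paper itself offers no proof of this lemma, simply labeling it ``a standard fact,'' and your functional-plus-density identification of $\dv v$ with $T_v \in H^{-1}(U)$ is the canonical way to justify it. Nothing is missing.
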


\begin{lemma}\label{L:TemamIBP}
	Let $U$ be an open subset of $\R^2$ with smooth boundary.
	Let $E(U) = \set{u \in (L^2(U))^2 \colon \dv u \in L^2(U)}$ endowed with the norm,
	$\smallnorm{u}_{E(U)} = \smallnorm{u} + \smallnorm{\dv u}$.
	There exists a continuous trace operator from $E(U)$ to $H^{-1/2}(\prt U)$, which we write
	as $u \mapsto u \cdot \n$, that extends the restriction to the boundary of the normal
	component of $u$ for continuous vector fields.
	Assume that $f$ lies in $H^1(U)$ and $u$ lies in $E(U)$.
	Then
	\begin{align*}
		\int_U u \cdot \grad f + \int_U \dv u \, f
			= (u \cdot \n, f)_{H^{-1/2}(\prt U), H^{1/2}(\prt U)},
	\end{align*}
	where $f$ is the usual trace operator from $H^1(U)$ to $H^{1/2}(U)$ applied to $f$.
\end{lemma}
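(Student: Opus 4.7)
The plan is to construct the normal-trace map $u \mapsto u \cdot \n$ on $E(U)$ by duality against $H^{1/2}(\prt U)$, and then read off the integration-by-parts identity from its defining relation. First I would show that $C^\iny(\ol U) \cap E(U)$ is dense in $E(U)$ in the graph norm $\smallnorm{u}_{L^2} + \smallnorm{\dv u}_{L^2}$; with $\prt U$ smooth, this follows from local reflection across the boundary, mollification, and a partition-of-unity patching. For smooth $u$ and $f \in C^\iny(\ol U)$, the classical divergence theorem gives
\[
\int_U u \cdot \grad f + \int_U (\dv u) f = \int_{\prt U} (u \cdot \n) f \, d\sigma,
\]
and the right-hand side depends only on the normal component of $u$ on $\prt U$ and on the boundary trace $\gamma f$ of $f$.

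Next, invoke the trace-lifting theorem to obtain, for each $\varphi \in H^{1/2}(\prt U)$, a lift $\widetilde\varphi \in H^1(U)$ with $\gamma \widetilde\varphi = \varphi$ and $\smallnorm{\widetilde\varphi}_{H^1(U)} \le C \smallnorm{\varphi}_{H^{1/2}(\prt U)}$. For $u \in C^\iny(\ol U) \cap E(U)$, define
\[
\dualpairing{u \cdot \n}{\varphi} := \int_U u \cdot \grad \widetilde\varphi + \int_U (\dv u) \widetilde\varphi.
\]
Two lifts differ by an $H^1_0(U)$ element, on which the right-hand side vanishes (apply the smooth divergence theorem after $C^\iny_c(U)$-approximation), so the definition is independent of the chosen lift. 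Cauchy--Schwarz then yields $\abs{\dualpairing{u \cdot \n}{\varphi}} \le C \smallnorm{u}_{E(U)} \smallnorm{\varphi}_{H^{1/2}(\prt U)}$, so $u \mapsto u \cdot \n$ extends by density to a bounded operator $E(U) \to H^{-1/2}(\prt U)$ that agrees with the pointwise normal trace for continuous vector fields.

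Finally, for $u \in E(U)$ and $f \in H^1(U)$, the lemma's identity follows by taking $\widetilde\varphi = f$ in the displayed formula above with $\varphi = \gamma f$: for smooth $u$ the two sides agree by the construction, and each of the three terms in the identity depends continuously on $u$ in the graph norm for $f$ fixed, so the identity passes to all of $E(U)$ by density. The main technical obstacle is the pair of ingredients on which everything rests, namely the density of $C^\iny(\ol U) \cap E(U)$ in $E(U)$ and the existence of a bounded right inverse of the Dirichlet trace; both genuinely require the smoothness assumption on $\prt U$, and the rest of the argument is essentially bookkeeping once they are in hand.
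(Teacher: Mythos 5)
Your construction is correct and is essentially the standard argument: the paper does not prove this lemma itself but cites it as Theorem I.1.2 of Temam's book, and your route (density of $C^\iny(\ol U)$ fields in $E(U)$, a bounded lift from $H^{1/2}(\prt U)$ to $H^1(U)$, definition of $u \cdot \n$ by duality, then passage to the limit) is precisely the proof given there. The only point worth flagging is that the density of smooth fields in the graph norm of $\dv$ is usually obtained by localization, a small outward/inward dilation of the support, and mollification rather than by reflection (which does not interact cleanly with $\dv$), but this is a cosmetic difference in an otherwise standard and complete outline.
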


%
% Section - acknowledgements.
%
\section*{Acknowledgements}

Work on this paper was supported in part by NSF Grants DMS-1212141 and DMS-1009545. The author appreciates helpful conversations with Helena Nussenzveig Lopes and Milton Lopes Filho.

% \bibliography{Refs}

\begin{thebibliography}{10}

\bibitem{AKLL2014}
David~M. Ambrose, James~P. Kelliher, Milton C.~Lopes Filho, and Helena
  J.~Nussenzveig Lopes.
\newblock {S}erfati solutions to the 2{D} {E}uler equations on exterior
  domains.
\newblock {\em arXiv:1401.2655}, 2014.

\bibitem{Brunelli2010}
E.~Brunelli.
\newblock On the {E}uler equation in the plane.
\newblock {\em Comm. Partial Differential Equations}, 35(3):480--495, 2010.

\bibitem{Burch1978}
Charles~C. Burch.
\newblock The {D}ini condition and regularity of weak solutions of elliptic
  equations.
\newblock {\em J. Differential Equations}, 30(3):308--323, 1978.

\bibitem{CannonKnightly1970}
J.~R. Cannon and George~H. Knightly.
\newblock A note on the {C}auchy problem for the {N}avier-{S}tokes equations.
\newblock {\em SIAM J. Appl. Math.}, 18:641--644, 1970.

\bibitem{Cannone1995}
Marco Cannone.
\newblock {\em Ondelettes, paraproduits et {N}avier-{S}tokes}.
\newblock Diderot Editeur, Paris, 1995.
\newblock With a preface by Yves Meyer.

\bibitem{Cozzi2009}
Elaine Cozzi.
\newblock Vanishing viscosity in the plane for nondecaying velocity and
  vorticity.
\newblock {\em SIAM J. Math. Anal.}, 41(2):495--510, 2009.

\bibitem{Cozzi2010}
Elaine Cozzi.
\newblock A finite time result for vanishing viscosity in the plane with
  nondecaying vorticity.
\newblock {\em Commun. Math. Sci.}, 8(4):851--862, 2010.

\bibitem{Cozzi2013}
Elaine Cozzi.
\newblock Vanishing viscosity in the plane for non decaying velocity and
  vorticity {II}.
\newblock {\em Submitted}, 2013.

\bibitem{GMS2001}
Y.~Giga, S.~Matsui, and O.~Sawada.
\newblock Global existence of two-dimensional {N}avier-{S}tokes flow with
  nondecaying initial velocity.
\newblock {\em J. Math. Fluid Mech.}, 3(3):302--315, 2001.

\bibitem{GKKM2001}
Yoshikazu Giga, Katsuya Inui, Jun Kato, and Shin'ya Matsui.
\newblock Remarks on the uniqueness of bounded solutions of the
  {N}avier-{S}tokes equations.
\newblock In {\em Proceedings of the {T}hird {W}orld {C}ongress of {N}onlinear
  {A}nalysts, {P}art 6 ({C}atania, 2000)}, volume~47, pages 4151--4156, 2001.

\bibitem{GKM1999}
Yoshikazu Giga, Katsuya Inui, and Shin'ya Matsui.
\newblock On the {C}auchy problem for the {N}avier-{S}tokes equations with
  nondecaying initial data.
\newblock In {\em Advances in fluid dynamics}, volume~4 of {\em Quad. Mat.},
  pages 27--68. Dept. Math., Seconda Univ. Napoli, Caserta, 1999.

\bibitem{JunKato2003}
Jun Kato.
\newblock The uniqueness of nondecaying solutions for the {N}avier-{S}tokes
  equations.
\newblock {\em Arch. Ration. Mech. Anal.}, 169(2):159--175, 2003.

\bibitem{KNV2007}
A.~Kiselev, F.~Nazarov, and A.~Volberg.
\newblock Global well-posedness for the critical 2{D} dissipative
  quasi-geostrophic equation.
\newblock {\em Invent. Math.}, 167(3):445--453, 2007.

\bibitem{Knightly1972}
George~H. Knightly.
\newblock A {C}auchy problem for the {N}avier-{S}tokes equations in {$R^{n}$}.
\newblock {\em SIAM J. Math. Anal.}, 3:506--511, 1972.

\bibitem{Serfati1995A}
Philippe Serfati.
\newblock Solutions {$C^\infty$} en temps, {$n$}-{$\log$} {L}ipschitz born\'ees
  en espace et \'equation d'{E}uler.
\newblock {\em C. R. Acad. Sci. Paris S\'er. I Math.}, 320(5):555--558, 1995.

\bibitem{Serfati1995B}
Philippe Serfati.
\newblock Structures holomorphes \`a faible r\'egularit\'e spatiale en
  m\'ecanique des fluides.
\newblock {\em J. Math. Pures Appl. (9)}, 74(2):95--104, 1995.

\bibitem{Taniuchi2004}
Yasushi Taniuchi.
\newblock Uniformly local {$L^p$} estimate for 2-{D} vorticity equation and its
  application to {E}uler equations with initial vorticity in {$\bm{bmo}$}.
\newblock {\em Comm. Math. Phys.}, 248(1):169--186, 2004.

\bibitem{TaniuchiEtAl2010}
Yasushi Taniuchi, Tomoya Tashiro, and Tsuyoshi Yoneda.
\newblock On the two-dimensional {E}uler equations with spatially almost
  periodic initial data.
\newblock {\em J. Math. Fluid Mech.}, 12(4):594--612, 2010.

\bibitem{T2001}
Roger Temam.
\newblock {\em Navier-{S}tokes equations}.
\newblock AMS Chelsea Publishing, Providence, RI, 2001.
\newblock Theory and numerical analysis, Reprint of the 1984 edition.

\bibitem{VishikBesov}
Misha Vishik.
\newblock Incompressible flows of an ideal fluid with vorticity in borderline
  spaces of {B}esov type.
\newblock {\em Ann. Sci. \'Ecole Norm. Sup. (4)}, 32(6):769--812, 1999.

\bibitem{Y1995}
V.~I. Yudovich.
\newblock Uniqueness theorem for the basic nonstationary problem in the
  dynamics of an ideal incompressible fluid.
\newblock {\em Math. Res. Lett.}, 2(1):27--38, 1995.

\end{thebibliography}
% \bibliographystyle{plain}

% \Ignore{ % Ignore

\def\cprime{$'$} \def\polhk#1{\setbox0=\hbox{#1}{\ooalign{\hidewidth
  \lower1.5ex\hbox{`}\hidewidth\crcr\unhbox0}}}

% } % End Ignore

\end{document}